\newtheorem{theorem}{Theorem}[section]
\newtheorem{proposition}[theorem]{Proposition}
\newtheorem{lemma}[theorem]{Lemma}
\theoremstyle{definition}
\newtheorem{definition}[theorem]{Definition}
\theoremstyle{remark}
\newcommand{\Z}{\mathbb{Z}}
\newcommand{\R}{\mathbb{R}}
\newcommand{\rmt}{\mathrm{t}}
\newcommand{\rmF}{\mathrm{F}}
\newcommand{\rmI}{\mathrm{I}}
\newcommand{\rmL}{\mathrm{L}}
\newcommand{\rmV}{\mathrm{V}}
\newcommand{\bbD}{\mathbb{D}}
\newcommand{\bbI}{\mathbb{I}}
\newcommand{\bbM}{\mathbb{M}}
\newcommand{\bbP}{\mathbb{P}}
\newcommand{\bbS}{\mathbb{S}}
\newcommand{\bbW}{\mathbb{W}}
\DeclareRobustCommand{\bbGamma}{\mathbin{\text{\includegraphics[height=\heightof{$\mathbf{\Gamma}$}]{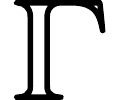}}}}
\DeclareRobustCommand{\bbSigma}{\mathbin{\text{\includegraphics[height=\heightof{$\mathbf{\Sigma}$}]{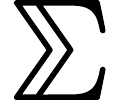}}}}
\newcommand{\bfA}{\mathbf{A}}
\newcommand{\bfC}{\mathbf{C}}
\newcommand{\bfF}{\mathbf{F}}
\newcommand{\bfepsilon}{\boldsymbol{\varepsilon}}
\newcommand{\bfmu}{\boldsymbol{\mu}}
\newcommand{\bfsigma}{\boldsymbol{\sigma}}
\newcommand{\bfchi}{\boldsymbol{\chi}}
\newcommand{\calA}{\mathcal{A}}
\newcommand{\calC}{\mathcal{C}}
\newcommand{\calD}{\mathcal{D}}
\newcommand{\calL}{\mathcal{L}}
\newcommand{\calR}{\mathcal{R}}
\newcommand{\calS}{\mathcal{S}}
\newcommand{\calV}{\mathcal{V}}
\newcommand{\bcalC}{\mathbcal{C}}
\renewcommand{\epsilon}{\varepsilon}
\renewcommand{\theta}{\vartheta}
\renewcommand{\phi}{\varphi}
\renewcommand{\Gamma}{\varGamma}
\renewcommand{\Sigma}{\varSigma}
\newcommand{\id}{\mathrm{id}}
\newcommand{\ptr}{\mathrm{ptr}}
\newcommand{\im}{\operatorname{im}}
\newcommand{\lev}{\smash{\stackrel{\leftarrow}{\mathrm{ev}}}}
\newcommand{\lcoev}{\smash{\stackrel{\longleftarrow}{\mathrm{coev}}}}
\newcommand{\rev}{\smash{\stackrel{\rightarrow}{\mathrm{ev}}}}
\newcommand{\rcoev}{\smash{\stackrel{\longrightarrow}{\mathrm{coev}}}}
\DeclareMathOperator{\Forall}{\forall}
\DeclareMathOperator{\sqtimes}{\scaleobj{0.8}{\boxtimes}}
\DeclareMathOperator{\csqtimes}{\hat{\scaleobj{0.8}{\boxtimes}}}
\DeclareMathOperator{\disjun}{\sqcup}
\DeclareMathOperator{\din}{\dot{\Rightarrow}}
\newcommand{\leqs}{\leqslant}
\newcommand{\mods}[1]{\operatorname{\mathnormal{#1}-mod}}
\newcommand{\bdots}{\reflectbox{$\ddots$}}
\newcommand{\fsl}{\mathfrak{sl}}
\newcommand{\End}{\mathrm{End}}
\newcommand{\Vect}{\mathrm{Vect}}
\newcommand{\op}{\mathrm{op}}
\DeclareRobustCommand{\one}{\mathbin{\text{\includegraphics[height=\heightof{$\mathbf{1}$}]{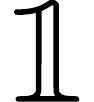}}}}
\newcommand{\bfCob}{\mathbf{Cob}}
\newcommand{\bfadCob}{\check{\mathbf{C}}\mathbf{ob}}
\newcommand{\rmadCob}{\check{\mathrm{C}}\mathrm{ob}}
\newcommand{\bfCat}{\mathbf{Cat}}
\newcommand{\coCat}{\hat{\mathbf{C}}\mathbf{at}}
\newcommand{\cmpl}{\bfC}
\newcommand{\Kar}{\mathrm{K}}
\newcommand{\Mat}{\mathrm{M}}
\newcommand{\Proj}{\mathrm{Proj}}
\newcommand{\adSk}{\check{\mathrm{S}}}
\newcommand{\adD}{\check{\mathrm{D}}}
\newcommand{\bbProj}{\mathbb{P}\mathrm{roj}}
\newcommand{\Sk}{\mathrm{S}}
\newcommand{\subalign}[1]{
  \vcenter{
    \Let@ \restore@math@cr \default@tag
    \baselineskip\fontdimen10 \scriptfont\tw@
    \advance\baselineskip\fontdimen12 \scriptfont\tw@
    \lineskip\thr@@\fontdimen8 \scriptfont\thr@@
    \lineskiplimit\lineskip
    \ialign{\hfil$\m@th\scriptstyle##$&$\m@th\scriptstyle{}##$\crcr
      #1\crcr
    }
  }
}
\def\clap#1{\hbox to 0pt{\hss#1\hss}}
\newcommand{\coend}{\mathcal{L}} 
\newcommand{\intL}{\Lambda}
\newcommand{\pic}[2][0]{\raisebox{-0.5\height + 2.5pt + #1pt}{\includegraphics{#2.pdf}}}
\newcommand\arxiv[2]{\href{https://arXiv.org/abs/#1}{\texttt{arXiv:\allowbreak #1} #2}}
\newcommand\doi[2]{\href{https://doi.org/#1}{#2}}
\DeclareRobustCommand{\myuline}[1]{
 \ifmmode \text{\uline{$\phantom{#1}$}\llap{\contour{white}{$#1$}}}
 \else \uline{\phantom{#1}}\llap{\contour{white}{#1}} \fi
}
\newcommand{\pant}{\otimes}
\newcommand{\pantbar}{\myuline{\calL \otimes \phantom{\_}}}
\begin{document}

\raggedbottom

\title{Extended TQFTs From Non-Semisimple Modular Categories}

\author[M. De Renzi]{Marco De Renzi} 
\address{Institute of Mathematics, University of Zurich, Winterthurerstrasse 190, CH-8057 Zurich, Switzerland} 
\email{marco.derenzi@math.uzh.ch}

\begin{abstract}
 We construct 3-dimensional once-Extended Topological Quantum Field Theories (ETQFTs for short) out of (possibly non-semisimple) modular categories, and we explicitly identify linear categories and functors in their image. The circle category of an ETQFT produced by our construction is equivalent to the full subcategory of projective objects of the underlying modular category. In particular, it need not be semisimple.
\end{abstract}

\maketitle
\setcounter{tocdepth}{3}

\section{Introduction}

The goal of this paper is to construct and characterize 3-dimensional Extended Topological Quantum Field Theories (ETQFTs for short) starting from an algebraic structure called a modular category. Our result provides a 2-categorical extension of the non-semisimple TQFTs that were build in \cite{DGGPR19} and further studied in \cite{DGGPR20}. It also provides a non-semisimple counterpart to the class of semisimple ETQFTs that were completely classified by Bartlett, Douglas, Schommer-Pries, and Vicary in \cite{BDSV15}, although classification questions are not addressed in this work. The result is very closely related to the one that was obtained in \cite{D17} starting from relative modular categories. However, since the algebraic ingredients considered here are less intricate, this version of the construction is free from much of the technical challenges that characterize the Costantino--Geer--Patureau (CGP for short) theory. This allows for an illustration of the main ideas at play in a more straightforward manner.

Some care is needed in order to explain precisely our terminology, as several distinct notions carry the same name in the literature. The term \textit{modular category}, for instance, is employed here in the not necessarily semisimple sense of Lyubashenko \cite{L94}, whose definition differs from the original one of Turaev \cite{T94} in that the semisimplicity assumption is dropped. In other words, to us a modular category $\calC$ will always mean a finite ribbon category whose braiding satisfies an appropriate non-degeneracy condition, see Section~\ref{S:modular_categories}, but which is not required to be semisimple. Another term that can have several concurring definitions is the term \textit{ETQFT}. It is used here in the once-extended, 2-categorical sense whose first traces can be found in the work of Freed \cite{F92} and Lawrence \cite{L93}, and which was systematically treated by Schommer-Pries \cite{S11} and by Bartlett, Douglas, Schommer-Pries, and Vicary \cite{BDSV15}. Following this approach, an ETQFT can be defined as a symmetric monoidal 2-functor with source a 2-category of cobordisms, and with several possible targets provided by linear 2-categories of some sort. Remark that here the term 2\textit{-category} has to be understood in the fully weak sense of Bénabou \cite{B67} (who used the term \textit{bicategory} instead). In our case, the target for ETQFTs will be provided by the 2-category $\coCat_\Bbbk$ of (additive and idempotent) complete linear categories, functors, and natural transformations, see Definition~\ref{D:sym_mon_2-cat_of_co_lin_cat}, while the definition of the source 2-category $\bfadCob_\calC$ will depend on the modular category $\calC$. Roughly speaking, in $\bfadCob_\calC$ objects are closed 1-dimensional manifolds, 1-morphisms are 2-dimensional cobordisms decorated with marked points labeled by objects of $\calC$, and 2-morphisms are 3-dimensional cobordisms with corners decorated with so-called \textit{bichrome graphs} labeled by objects and morphisms of $\calC$. These decorations, whose definition is recalled in Section~\ref{S:3-manifold_invariant}, need to satisfy an admissibility condition that consists in requiring the presence of a projective object of $\calC$ among the labels of decorations of connected components of cobordisms which are disjoint from the incoming boundary, see Definition~\ref{D:sym_mon_2-cat_of_adm_cob}. Our main result can then be stated as follows, see Theorem~\ref{T:symmetric_monoidality} and Proposition~\ref{P:univ_lin_cat}.

\begin{theorem}\label{T:main}
 If $\calC$ is a modular category over an algebraically closed field $\Bbbk$, then there exists an ETQFT $\hat{\bfA}_\calC : \bfadCob_\calC \to \coCat_\Bbbk$ whose circle category satisfies
 \[
  \hat{\bfA}_\calC(\bbS^1) \cong \Proj(\calC).
 \]
\end{theorem}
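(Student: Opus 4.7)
The plan is to construct $\hat{\bfA}_\calC$ through a 2-categorical universal construction in the spirit of Blanchet-Habegger-Masbaum-Vogel, fed by the renormalized non-semisimple CGP-type invariant of admissible bichrome graphs in closed 3-manifolds that is recalled in Section~\ref{S:3-manifold_invariant}. This is essentially a streamlined version of the construction of \cite{D17}, simplified by the fact that the algebraic input here is a modular category rather than a relative modular one, so that no cohomological grading needs to be carried along. For a closed oriented 1-manifold $S$ I would first build a $\kk$-linear category $\bfA_\calC(S)$ whose objects are admissible decorated 2-cobordisms $\Sigma_0 : \emptyset \to S$, and whose Hom-space $\bfA_\calC(S)(\Sigma_0,\Sigma_1)$ is the quotient of the free $\kk$-module on admissible bichrome-graph 3-cobordisms $M : \Sigma_0 \Rightarrow \Sigma_1$ by the universal kernel of the invariant: two classes $M$ and $M'$ are identified if, for every admissible capping $N : \Sigma_1 \Rightarrow \Sigma_0$, the 3-manifold invariants of their closures agree. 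Composition is stacking, and the value $\hat{\bfA}_\calC(S)$ of the ETQFT at $S$ is the additive and idempotent completion of $\bfA_\calC(S)$ inside $\coCat_\kk$.

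A decorated 1-cell $\Sigma : S_0 \to S_1$ is then sent to the functor induced by gluing $\Sigma$ on top along $S_0$, and a decorated 2-cell $M : \Sigma \Rightarrow \Sigma'$ to the natural transformation whose component at $\Sigma_0$ is represented by $M$ stacked on $\Sigma_0$. The required coherence 2-isomorphisms (associators, unitors, interchangers) arise from the canonical diffeomorphisms identifying iterated gluings, lifted to the universal construction by the diffeomorphism invariance of the 3-manifold invariant. The symmetric monoidal structure is transported from disjoint union on both source and target, with the tensor coherence data obtained from the multiplicativity of the CGP invariant on disjoint unions.

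For the identification $\hat{\bfA}_\calC(\bbS^1) \cong \Proj(\calC)$, I would define a functor $F : \Proj(\calC) \to \hat{\bfA}_\calC(\bbS^1)$ sending a projective object $P$ to the disk with one marked point labeled by $P$, and a morphism $f : P \to Q$ to the cylinder $\bbS^1 \times [0,1]$ decorated by a coupon carrying $f$. Full faithfulness amounts to establishing the isomorphism
\[
 \Hom_{\hat{\bfA}_\calC(\bbS^1)}(F(P), F(Q)) \cong \Hom_\calC(P,Q),
\]
which can be proved by surgery-reducing any bichrome-graph class between two pointed disks to its \emph{core} coupon and invoking non-degeneracy of the Lyubashenko coend pairing on $\calC$. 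Essential surjectivity up to summands is then obtained by showing that any admissible 2-cobordism $\Sigma_0 : \emptyset \to \bbS^1$ can be rewritten, inside the universal construction, as a formal summand of a disjoint union of such decorated disks; after Karoubi completion this captures precisely $\Proj(\calC)$, with projectivity being forced by the admissibility condition on labels.

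The most delicate point in this programme is the bookkeeping of the admissibility condition through gluings: when two cobordisms are composed, a projective object initially witnessing admissibility of one piece may end up on a component that is no longer disjoint from the incoming boundary of the composite, so one must check that this phenomenon is reflected rather than lost at the level of the universal quotient. Once this technical obstruction is handled, the remaining 2-functoriality and symmetric monoidality checks reduce to diffeomorphism invariance and multiplicativity of the underlying 3-manifold invariant, and the circle identification reduces to a coend computation of the same nature as in the non-extended non-semisimple TQFT literature.
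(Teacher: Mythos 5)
Your overall strategy coincides with the paper's: build $\bfA_\calC$ by the extended universal construction on $\bfadCob_\calC$, complete in $\coCat_\Bbbk$, and identify the circle category via the disc functor $\rmF : V \mapsto \bbD^2_{(+,V)}$ using a domination argument for essential surjectivity and a non-degeneracy argument for faithfulness. Two points, however, need correction.

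First, the monoidality argument has a genuine gap. You claim the symmetric monoidal structure is ``obtained from the multiplicativity of the CGP invariant on disjoint unions,'' and later that the remaining monoidality checks ``reduce to diffeomorphism invariance and multiplicativity.'' Multiplicativity only shows the lax structure map $\bfmu_{\bbGamma,\bbGamma'} : \bfA_\calC(\bbGamma) \sqtimes \bfA_\calC(\bbGamma') \to \bfA_\calC(\bbGamma \disjun \bbGamma')$ is well-defined and faithful (Proposition~\ref{P:lax_monoidality_ETQFT}). To get an honest ETQFT you must show $\bfmu_{\bbGamma,\bbGamma'}$ is a c-equivalence, and the nontrivial part is fullness: a $2$-morphism between disjoint unions must be rewritten as a sum of disjoint unions of $2$-morphisms. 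The paper does this via Lemma~\ref{L:cutting}, which says that inserting $(\bbD^1 \times \bbS^2)_G$ is the same, in $\bfA_\calC$, as inserting $\calD \cdot (\overline{\bbD^3_{\pi_{\one}}} \disjun \bbD^3_{\iota_{\one}})$; this is the surgery-axiom-type relation that lets one cut a connected $3$-cobordism along an $S^2$ and separate it into two pieces. It uses the $G$-labeled sphere, red-to-blue operations, the fact that $\Proj(\calC)$ is an ideal, and crucially the non-vanishing of the stabilization coefficient $\calD$ --- which is exactly where modularity enters and is not a formal consequence of multiplicativity. The admissibility bookkeeping you flag as ``the most delicate point'' is indeed delicate, and the paper resolves it with the notion of strong admissibility, the projective trick, and Lemmas~\ref{L:connection_lemma}--\ref{L:triviality_lemma}; without spelling this out, the plan does not yet yield a symmetric monoidal $2$-functor.

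Second, a smaller inaccuracy: faithfulness of $\rmF$ in Proposition~\ref{P:univ_lin_cat} is established by the non-degeneracy of the modified trace pairing $(f,f') \mapsto \rmt_V(f' \circ f)$ on $\Proj(\calC)$, not the Lyubashenko coend (Hopf) pairing. Both follow from modularity, but the modified trace is the tool that directly detects a nonzero morphism $f \in \calC(V,V')$ by producing an $f'$ with $\rmt_V(f' \circ f) \neq 0$, which is then fed into the invariant $\rmL'_\calC$ applied to a closed-up $D^3 \cup_{S^2} \overline{D^3}$. Relatedly, the underlying scalar invariant here is the renormalized Lyubashenko invariant $\rmL'_\calC$ of \cite{DGGPR19}, not the CGP invariant; you already note the absence of cohomological grading, but the label should match.
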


In particular, when $\calC$ is semisimple (and thus modular in the sense of Turaev), the construction recovers the corresponding ETQFT appearing in the classification of \cite{BDSV15}. The statement of Theorem~\ref{T:main} might be surprising, because the full subcategory $\Proj(\calC)$ of projective objects of $\calC$ is not in general a monoidal category (it is if and only if $\calC$ is semisimple). This has to do with the admissibility condition on morphisms of $\bfadCob_\calC$. Indeed, connected 2-dimensional cobordisms with empty incoming boundary need to carry projective labels, so that in particular the disc $D^2$, when interpreted as a cobordism from $\varnothing$ to $S^1$, cannot be equipped with a trivial decoration. Since it is this morphism that would determine the unit for the tensor product on $\hat{\bfA}_\calC(\bbS^1)$, its absence from $\bfadCob_\calC$ is consistent with the absence of the tensor unit of $\calC$ from $\Proj(\calC)$ (whenever $\calC$ is non-semisimple). Under this equivalence, disc functors can be interpreted in terms of morphism spaces of $\calC$, while pant functors in terms of the tensor product and the coend of $\calC$. It would be interesting to understand whether the admissibility condition for morphisms of $\bfadCob_\calC$ can be relaxed to allow for closed surfaces with empty decorations, and if the ETQFT $\hat{\bfA}_\calC$ can be extended over this larger 2-category of cobordisms, thus yielding a circle category which is equivalent to the whole modular category $\calC$.

\subsection{Survey of previous literature}

Motivated by the seminal work of Witten \cite{W88}, a set of axioms for the notion of TQFT was first proposed by Atiyah \cite{A88}, who was inspired by a similar definition for the concept of Conformal Field Theory (CFT for short) due to Segal \cite{S88}.
In modern terms, a TQFT is defined as a symmetric monoidal functor from a category of cobordisms to the category of vector spaces. Following his groundbreaking work with Reshetikhin \cite{RT91}, Turaev explained in \cite{T94} how to construct a TQFT starting from a semisimple modular category, a notion he introduced precisely for this purpose. We point out that these algebraic structures are typically called modular categories, and that we specify that we are talking about the semisimple version of the concept here in order to distinguish it from modular categories as defined by Lyubashenko in \cite{L94}.

The question of extending TQFTs to lower dimensional manifolds is an old one. Surfaces with boundary were first discussed using the \textit{modular functor} approach by Walker \cite{W91}, and later by Turaev \cite{T94}, Blanchet, Habegger, Masbaum, and Vogel \cite{BHMV95}, and Bakalov and Kirillov \cite{BK01}, among others. For our construction, we will rather adopt a higher categorical approach which encompasses modular functors, following ideas first considered by Freed \cite{F92} and Lawrence \cite{L93}, and greatly promoted by Baez and Dolan \cite{BD95}, whose \textit{cobordism hypothesis} was famously proved by Lurie \cite{L09}. In particular, the 2-categorical treatment we will follow here is closest to the one of Bartlett, Douglas, Schommer-Pries, and Vicary \cite{BDSV15}, who completely classified 3-dimensional ETQFTs in terms of semisimple modular categories. For their classification, they considered several flavors of 2-categories of cobordisms, all of which are rigid. Since the 2-categories of cobordisms considered here as sources for ETQFTs do not meet this condition in general, we will step outside of the semisimple framework.

The first attempt at constructing ETQFTs out of non-semisimple algebraic ingredients is due to Kerler and Lyubashenko \cite{KL01}, who used double-categories instead of 2-categories. However, their result builds on the 3-manifold invariants defined by Hennings \cite{H96} and Lyubashenko \cite{L94}, whose properties determine deep obstructions to monoidality of functorial extensions in the sense of Atiyah, that is, with respect to disjoint union. Indeed, the invariant associated with a non-semisimple modular category vanishes against every closed 3-manifold whose first Betti number is strictly positive. This forces Kerler and Lyubashenko to work exclusively with connected surfaces and 3-manifolds, and to consider a weaker notion of monoidality based on connected sum instead of disjoint union. 

One way to overcome these problems is to use the theory of \textit{modified traces} developed by Geer, Kujawa, and Patureau \cite{GKP10}. These techniques were first used to construct invariants of closed 3-manifolds by Costantino, Geer, and Patureau in \cite{CGP12}. Such invariants were then extended to graded TQFTs defined over non-rigid categories of cobordisms in \cite{BCGP14}, but only in a special case arising from the representation theory of the unrolled quantum group of $\fsl_2$. The construction of ETQFTs for the CGP theory in a general setting was achieved in \cite{D17} using \textit{relative modular categories}, a different non-semisimple generalization of Turaev's notion of semisimple modular category. Since these algebraic ingredients are rather complicated, being modeled on the representation theory of infinite-dimensional unrolled quantum groups, the resulting ETQFTs are defined over non-rigid 2-categories of cobordisms decorated with cohomology classes, and take values in 2-categories of complete graded linear categories. 

The use of modified traces was subsequently integrated into Hennings' construction first \cite{DGP17}, and into Lyubashenko's construction later \cite{DGGPR19}, yielding TQFTs which extend Lyubashenko's mapping class group representations to symmetric monoidal functors defined over non-rigid categories of cobordisms \cite{DGGPR20}. A discussion of the relation between this approach and the CGP one, in the case of quantum groups, can be found in \cite{DGP18}. Our goal in this paper is to apply the construction of \cite{D17} in Lyubashenko's setting, where cohomology classes and graded extensions are not needed.

\subsection{Outline of the construction}

Starting from a modular category $\calC$ over an algebraically closed field $\Bbbk$, we can consider the renormalized Lyubashenko invariant $\rmL'_\calC$ constructed in \cite{DGGPR19}. This is a topological invariant of closed connected 3-manifolds decorated with \textit{admissible bichrome graphs}, which are special ribbon graphs whose edges can be either \textit{red}, and unlabeled, or \textit{blue}, and labeled by objects of $\calC$. The admissibility condition requires the set of labels of blue edges to contain at least a projective object of $\calC$, as explained in Section~\ref{S:3-manifold_invariant}. The universal construction of \cite{BHMV95} can then be used to extend the invariant $\rmL'_\calC$ to a pair of TQFTs $\rmV_\calC : \rmadCob_\calC \to \Vect_\Bbbk$ and $\rmV'_\calC : (\rmadCob_\calC)^\op \to \Vect_\Bbbk$, where $\rmadCob_\calC$ is the \textit{category of admissible cobordisms} recalled in Section~\ref{S:TQFT}. Very loosely speaking, the procedure is the following: for each object $\bbSigma$ of $\rmadCob_\calC$, we should first consider linearized versions of the sets of morphisms $\rmadCob_\calC(\varnothing,\bbSigma)$ and $\rmadCob_\calC(\bbSigma,\varnothing)$, given by vector spaces denoted $\calV(\bbSigma)$ and $\calV'(\bbSigma)$ respectively. Then, we should consider the bilinear map
\[
 \langle \_,\_ \rangle_{\bbSigma} : \calV'(\bbSigma) \times \calV(\bbSigma) \to \Bbbk
\]
induced by the invariant $\rmL'_\calC$, see Section~\ref{S:TQFT}. The vector spaces $\rmV_\calC(\bbSigma)$ and $\rmV'_\calC(\bbSigma)$ are defined as the quotients 
\begin{align*}
 \rmV_\calC(\bbSigma) &:= \calV(\bbSigma) / \calV'(\bbSigma)^\perp, 
 &
 \rmV'_\calC(\bbSigma) &:= \calV'(\bbSigma) / \calV(\bbSigma)^\perp
\end{align*}
with respect to the annihilators associated with the pairing $\langle \_,\_ \rangle_{\bbSigma}$. We point out that symmetric monoidality of $\rmV_\calC$ and $\rmV'_\calC$ is not at all obvious, but it is rather the result of a careful analysis, and a consequence of the modularity condition of $\calC$.

The idea for defining 2-functorial extensions of $\rmL'_\calC$ is essentially the same, only translated one categorical level higher. The early germs of this extended version of the universal construction date back to \cite{BHMV95}, although the approach first appeared in its full 2-categorical form in \cite{D17}. The so-called \textit{extended universal construction} can be used to define, starting from the invariant $\rmL'_\calC$, a pair of ETQFTs $\bfA_\calC : \bfadCob_\calC \to \coCat_\Bbbk$ and $\bfA'_\calC : (\bfadCob_\calC)^\op \to \coCat_\Bbbk$. Again, very loosely speaking, the procedure is the following: for each object $\bbGamma$ of $\bfadCob_\calC$, we should first consider linearized versions of the categories of morphisms $\bfadCob_\calC(\varnothing,\bbGamma)$ and $\bfadCob_\calC(\bbGamma,\varnothing)$, given by linear categories denoted $\calA(\bbGamma)$ and $\calA'(\bbGamma)$ respectively. Then, we should consider the bilinear functor
\[
 \langle \_,\_ \rangle_{\bbGamma} : \calA'(\bbGamma) \times \calA(\bbGamma) \to \Vect_\Bbbk
\]
induced by the TQFT $\rmV_\calC$, see Section~\ref{S:extended_universal_construction}. The linear categories $\bfA_\calC(\bbGamma)$ and $\bfA'_\calC(\bbGamma)$ are then defined as the quotients 
\begin{align*}
 \bfA_\calC(\bbSigma) &:= \calA(\bbGamma) / \calA'(\bbGamma)^\perp, 
 &
 \bfA'_\calC(\bbSigma) &:= \calA'(\bbGamma) / \calA(\bbGamma)^\perp
\end{align*}
with respect to the annihilators associated with the pairing $\langle \_,\_ \rangle_{\bbGamma}$, where a precise sense to this notation is given in Section~\ref{S:extended_universal_construction}. Just like in the previous case, some work is required in order to establish the symmetric monoidality of these 2-functors. This takes up Sections~\ref{S:combinatorial_topological_properties} and \ref{S:monoidality}.

In Section~\ref{S:identification_of_image}, the image of $\bfA_\calC$ is characterized in terms of the modular category $\calC$. More precisely, Proposition~\ref{P:univ_lin_cat} states that the additive and idempotent completion of the circle category $\bfA_\calC(\bbS^1)$ is equivalent to the full subcategory $\Proj(\calC)$ of projective objects of $\calC$. Under this equivalence, functors associated with generating 1-morphisms of $\bfadCob_\calC$ can be explicitly identified:
\begin{enumerate}
 \item A disc $\bbD^2_{(+,V)} : \varnothing \to \bbS^1$, decorated with a single positive marked point labeled by $V \in \Proj(\calC)$, is translated to the functor
  \[
   V : \Bbbk \to \Proj(\calC)
  \]
  sending the unique object of the linear category $\Bbbk$ to $V$, see Proposition~\ref{P:unit} (remark that $V$ is required to be projective, as a consequence of the admissibility condition);
 \item A disc $\overline{\bbD^2_{(-,V)}} : \bbS^1 \to \varnothing$ with opposite orientation, and decorated with a single negative marked point labeled by $V \in \calC$, is translated to the functor
  \[
   \calC(V,\_) : \Proj(\calC) \to \Vect_\Bbbk
  \]
  sending every object $V'$ of $\Proj(\calC)$ to the vector space of morphisms $\calC(V,V')$, see Proposition~\ref{P:counit} (remark that this time $V$ can be arbitrary);
 \item A pair of pants $\bbP^2 : \bbS^1 \disjun \bbS^1 \to \bbS^1$ is translated to the functor
  \[
   \pant : \Proj(\calC) \sqtimes \Proj(\calC) \to \Proj(\calC)
  \]
  sending every object $(V,V')$ of $\Proj(\calC) \sqtimes \Proj(\calC)$ to $V \otimes V'$, see Proposition~\ref{P:product};
 \item A pair of pants $\overline{\bbP^2} : \bbS^1 \to \bbS^1 \disjun \bbS^1$ with opposite orientation is translated to the functor
  \[
   \pantbar : \Proj(\calC) \to \Proj(\calC) \sqtimes \Proj(\calC)
  \]
  sending every object $V$ of $\Proj(\calC)$ to the splitting, in $\Proj(\calC) \sqtimes \Proj(\calC)$, of the coend $\coend \otimes V$, see Propositions~\ref{P:uccidetemi}-\ref{P:coproduct_critical}.
\end{enumerate}

We highlight point $(iv)$, which can be understood, using Proposition~\ref{P:coproduct_critical}, as a non-semisimple version of the standard property of modular functors corresponding to the \textit{gluing axiom}~(2.2) in \cite[Section~2]{W91}, to the \textit{splitting axiom}~(1.6.3) in \cite[Section~V.1.6]{T94}, to the \textit{colored splitting theorem} in \cite[Section~1.14]{BHMV95}, or to the \textit{gluing isomorphism}~$(iv)$ in \cite[Definition~5.1.13]{BK01}. Notice however that, in this setting, the property is parametric, meaning it enables us to cut and glue decorated surfaces along simple closed curves only in the proximity of a marked point labeled by an object $V \in \Proj(\calC)$, see Figure~\ref{F:I_times_D_2}.

\subsection*{Acknowledgments}

The author would like to thank Azat Gainutdinov for several helpful discussions.

\subsection*{Conventions}

Every manifold in this paper is assumed to be oriented, and every diffeomorphism orientation-preserving. We also fix an algebraically closed field $\Bbbk$, and we use the term linear as a shorthand for $\Bbbk$-linear.

\section{TQFTs from modular categories}

In this section, we recall the definition of the 3-dimensional TQFT $\rmV_\calC$ constructed in \cite{DGGPR19} out of a modular category $\calC$.

\subsection{Modular categories}\label{S:modular_categories}

First of all, we need to recall the definition of the basic algebraic ingredients for the construction. We start from Lyubashenko's notion of modular category, for which there exist several equivalent definitions, see for instance \cite[Section~6]{BD20}. The most concise one is the following: a \textit{modular category} is a finite ribbon category over $\Bbbk$ whose transparent objects are trivial, meaning they are isomorphic to direct sums of copies of the tensor unit.

Let us explain the terminology. A \textit{finite category} is a linear category $\calC$ over $\Bbbk$ which is equivalent to the category $\mods{A}$ of finite dimensional left $A$-modules for a finite dimensional algebra $A$ over $\Bbbk$, see \cite[Definition~1.8.6]{EGNO15} for an equivalent definition. A \textit{ribbon structure} on a linear category $\calC$ is given by:
\begin{enumerate}
 \item a tensor product $\otimes : \calC \times \calC \to \calC$;
 \item a tensor unit $\one \in \calC$;
 \item left and right evaluation and coevaluation morphisms $\lev_X : X^* \otimes X \to \one$, $\lcoev_X : \one \to X \otimes X^*$, $\rev_X : X \otimes X^* \to \one$, $\rcoev_X : \one \to X^* \otimes X$ for every $X \in \calC$;
 \item braiding morphisms $c_{X,Y} : X \otimes Y \to Y \otimes X$ for all $X,Y \in \calC$;
 \item twist morphisms $\theta_X : X \to X$ for every $X \in \calC$.
\end{enumerate}
This structure is required to satisfy several axioms, which can be found in \cite[Sections~2.1, 2.10, 8.1, 8.10]{EGNO15}. An object $X \in \calC$ of a ribbon category is said to be \textit{transparent} if it belongs to the Müger center $M(\calC)$ of $\calC$, which means its double braiding satisfies $c_{Y,X} \circ c_{X,Y} = \id_{X \otimes Y}$ for every object $Y \in \calC$.

A modular category $\calC$ always admits a \textit{coend}
\[
 \coend := \int^{X \in \calC} X^* \otimes X \in \calC,
\]
which is, by definition, the universal dinatural transformation with source
\begin{align*}
  (\_^* \otimes \_) : \calC^\op \times \calC & \to \calC \\*
  (X,Y) & \mapsto X^* \otimes Y,
\end{align*}
see \cite[Sections~IX.4-IX.6]{M71} for the terminology. In particular, such a coend is given by an object $\coend \in \calC$ together with a dinatural family of structure morphisms of the form $i_X : X^* \otimes X \to \coend$, one for every $X \in \calC$. The coend $\coend$ is unique up to isomorphism, and it supports the structure of a braided Hopf algebra in $\calC$ \cite{M91, M93, L95}, meaning it admits:
\begin{enumerate}
 \item a product $\mu : \coend \otimes \coend \to \coend$ and a unit $\eta : \one \to \coend$;
 \item a coproduct $\Delta : \coend \to \coend \otimes \coend$ and a counit $\epsilon : \coend \to \one$;
 \item an antipode $S : \coend \to \coend$.
\end{enumerate}
Furthermore, it admits a non-zero integral, which is a morphism $\intL : \one \to \coend$ of $\calC$ satisfying
\[
 \mu \circ (\intL \otimes \id_\coend) = \intL \circ \epsilon =
 \mu \circ (\id_\coend \otimes \intL).
\]
As explained in \cite[Section~4.2.3]{KL01}, integrals are unique up to a scalar.

Let us denote by $\Proj(\calC)$ the full subcategory of projective objects of $\calC$, which forms an ideal in $\calC$. In other words, $\Proj(\calC)$ is absorbent under tensor products and closed under retracts. Indeed, $\Proj(\calC)$ is always the smallest non-zero ideal of $\calC$ \cite[Section~4.4]{GKP10}. Being finite, $\calC$ has enough projectives, which means every object $X \in \calC$ admits a projective cover $P_X \in \Proj(\calC)$. Furthermore, since $\calC$ is modular, it is also unimodular, which means the projective cover $P_{\one}$ of the tensor unit $\one$ is self-dual. In particular, $P_{\one}$ is in addition the injective envelope of $\one$, which means it comes equipped with both a projection morphism $\epsilon_{\one} : P_{\one} \to \one$ and an injection morphism $\eta_{\one} : \one \to P_{\one}$, see \cite[Section~2]{DGGPR19} for more details. Both $\epsilon_{\one}$ and $\eta_{\one}$ are unique up to scalars, and $\epsilon_{\one} \circ \eta_{\one} \neq 0$ if and only if $\calC$ is semisimple. 

A \textit{modified trace} on $\Proj(\calC)$ is by definition a family of linear maps
\[
 \rmt := \{ \rmt_X : \End_\calC(X) \to \Bbbk \mid X \in \Proj(\calC) \}
\]
satisfying two conditions:
\begin{enumerate}
 \item \textit{Cyclicity}: $\rmt_X(g \circ f) = \rmt_Y(f \circ g)$ for all objects $X,Y \in \Proj(\calC)$, and all morphisms $f \in \calC(X,Y)$ and $g \in \calC(Y,X)$;
 \item \textit{Partial trace}: $\rmt_{X \otimes Y}(f) = \rmt_X(\ptr(f))$ for all objects $X \in \Proj(\calC)$ and $Y \in \calC$, and every morphism $f \in \End_\calC(X \otimes Y)$, where the morphism $\ptr(f) \in \End_\calC(X)$ is defined as
  \[
   \ptr(f) := (\id_X \otimes \rev_Y) \circ (f \otimes \id_{Y^*}) \circ (\id_X \otimes \lcoev_Y).
  \]
\end{enumerate}
If $\calC$ is modular, it is in particular unimodular, and there exists a unique non-zero modified trace $\rmt$ on $\Proj(\calC)$ up to scalar, as follows from \cite[Corollary~3.2.1]{GKP11}. Furthermore, thanks to \cite[Proposition~4.2]{GR17}, $\rmt$ induces a non-degenerate pairing 
\begin{align*}
 t_X (\_ \circ \_) : \calC(Y,X) \times \calC(X,Y) & \to \Bbbk \\*
 (f,g) & \mapsto t_X(f \circ g)
\end{align*}
for all $X \in \Proj(\calC)$ and $Y \in \calC$, as opposed to the standard categorical trace, which vanishes on $\Proj(\calC)$ whenever $\calC$ is non-semisimple. We lock the normalization of the modified trace $\rmt$ and the one of the projection and the injection morphisms $\epsilon_{\one} : P_{\one} \to \one$ and $\eta_{\one} : \one \to P_{\one}$ together by setting
\[
 \rmt_{P_{\one}}(\eta_{\one} \circ \epsilon_{\one}) = 1.
\]
This is made possible by the non-degeneracy of $\rmt$, together with the fact that both $\calC(P_{\one},\one)$ and $\calC(\one,P_{\one})$ are 1-dimensional.

\subsection{3-Manifold invariants}\label{S:3-manifold_invariant}

Let us suppose from now on that $\calC$ is a modular category, and let us recall the definition of the renormalized Lyubashenko invariant $\rmL'_\calC$ constructed in \cite[Section~3.2]{DGGPR19}. In order to do so, we first need to recall how the integral $\intL$ of the coend $\coend$ can be used to construct a functor $F_\intL$ defined over the category $\calR_\intL$ of bichrome graphs, taking values in $\calC$, and extending the Reshetikhin--Turaev functor $F_\calC$ of \cite{T94}.

Bichrome graphs are a only mild generalization of standard ribbon graphs. For the purposes of the present construction, we can even choose to work with a simplified version of the notion, and just define a \textit{bichrome graph} to be the union of a \textit{red} oriented framed link, carrying no labels, and a \textit{blue} oriented ribbon graph, labeled as usual by objects of $\calC$. Red and blue components are allowed to entwine, but not to intersect. This definition is less general than the original one of \cite[Section~3.1]{DGGPR19}, which also made room for bichrome coupons. It is important to specify that all constructions appearing in this paper work perfectly fine with the general definition of bichrome graphs, and that this is in fact the correct setting for recovering the TQFTs of \cite{DGGPR19}. However, all the arguments for the construction and the characterization of ETQFTs provided here do not require bichrome coupons, so in first approximation we can simply forget them. Whichever definition we pick, isotopy classes of bichrome graphs can be organized as morphisms of a category $\calR_\intL$, and the category $\calR_\calC$ of standard ribbon graphs can be interpreted as the subcategory of $\calR_\intL$ whose morphisms are exclusively blue graphs.

The \textit{Lyubashenko--Reshetikhin--Turaev functor} 
\[
 F_\intL : \calR_\intL \to \calC
\]
is defined in \cite[Section~3.1]{DGGPR19} as an extension of the Reshetikhin--Turaev functor $F_\calC : \calR_\calC \to \calC$, in the sense that it coincides with $F_\calC$ when restricted to $\calR_\calC$. The image of a general bichrome graph $T$ featuring red components is constructed from a so-called \textit{bottom graph presentation} $\tilde{T}$ of $T$ by exploiting the universal property satisfied by the coend $\calL$ in order to extract a morphism $f_\calC(\tilde{T})$ in $\calC$, which then needs to be precomposed with an appropriate tensor power of the integral $\intL$, see \cite[Section~2.2.1]{DGGPR20} for a more detailed explanation.

We say a bichrome graph is \textit{closed} if its source and target are empty, and we say it is \textit{admissible} if it features at least a projective object of $\calC$ among the labels of its blue edges. An admissible closed bichrome graph $T$ always admits a \textit{cutting presentation}, which is another bichrome graph $T_V$ having a unique incoming boundary vertex and a unique outgoing one, both positive and labeled by a projective object $V \in \Proj(\calC)$, with trace closure equal to $T$. Cutting presentations are by no means unique. However, if $T$ is a closed admissible bichrome graph and $T_V$ is a cutting presentation, then the scalar
\[
 F'_\intL(T) := \rmt_V(F_\intL(T_V))
\]
is a topological invariant of $T$ \cite[Theorem~3.3]{DGGPR19}.

The admissible bichrome graph invariant $F'_\intL$ can be used to define an invariant $\rmL'_\calC$ of admissible decorated 3-manifolds. An \textit{admissible decorated 3-manifold} is by definition a pair $(M,T)$ with $M$ a connected closed 3-manifold and $T \subset M$ an admissible closed bichrome graph. To define the invariant, we need to use \textit{stabilization coefficients}, which are defined as
\[
 \Delta_\pm := F_\intL(O_\pm) \in \Bbbk
\]
for a red $\pm 1$-framed unknot $O_\pm$. Since these coefficients are always non-zero, as a consequence of the modularity of $\calC$ \cite[Proposition~2.6]{DGGPR19}, we can fix once and for all constants $\calD$ and $\delta$ satisfying
\begin{align*}
 \calD^2 &= \Delta_+ \Delta_-, &
 \delta &= \frac{\calD}{\Delta_-} = \frac{\Delta_+}{\calD}.
\end{align*}
If $(M,T)$ is an admissible decorated 3-manifold, and $L$ is a surgery presentation of $M$, given by a framed oriented link (assumed to be red) in $S^3$ with $\ell$ components and signature $\sigma$, then the scalar
\begin{equation*}
 \rmL'_\calC(M,T) := \calD^{-1-\ell} \delta^{-\sigma} F'_\intL(L \cup T)
\end{equation*}
is a topological invariant of the pair $(M,T)$ \cite[Theorem~3.8]{DGGPR19}. We refer to $\rmL'_\calC(M,T)$ as the \textit{renormalized Lyubashenko invariant} of the admissible decorated 3-manifold $(M,T)$.

\subsection{TQFTs}\label{S:TQFT}

We are now ready to recall the definition of the TQFT $\rmV_\calC$. Its source is provided by the \textit{category $\rmadCob_\calC$ of admissible cobordisms}. An object $\bbSigma$ of $\rmadCob_\calC$ is defined to be a triple $(\Sigma,P,\lambda)$ where: 
\begin{enumerate}
 \item $\Sigma$ is a closed surface;
 \item $P \subset \Sigma$ is a finite set of oriented framed points labeled by objects of $\calC$;
 \item $\lambda \subset H_1(\Sigma;\R)$ is a Lagrangian subspace for the intersection form.
\end{enumerate}
A morphism $\bbM : \bbSigma \rightarrow \bbSigma'$ of $\rmadCob_\calC$ is defined to be an equivalence class of admissible triples $(M,T,n)$ where:
\begin{enumerate}
 \item $M$ is a 3-dimensional cobordism from $\Sigma$ to $\Sigma'$;
 \item $T \subset M$ is a bichrome graph from $P$ to $P'$;
 \item $n \in \Z$ is an integer called the \textit{signature defect}.
\end{enumerate}
A triple $(M,T,n)$ is said to be \textit{admissible} if every connected component of $M$ disjoint from the incoming boundary $\partial_- M \cong \Sigma$ contains an admissible subgraph of $T$. Two triples $(M,T,n)$ and $(M',T',n')$ are by definition equivalent if $n = n'$ and if there exists an isomorphism of cobordisms $f : M \rightarrow M'$ satisfying $f(T) = T'$. As a consequence of the admissibility condition, surfaces without labels in $\Proj(\calC)$ are not dualizable in $\rmadCob_\calC$.

We can use the universal construction of \cite{BHMV95} to extend any function defined on $\rmadCob_\calC(\varnothing,\varnothing)$ to a functor defined on $\rmadCob_\calC$. In order to do this, we first need to extend $\rmL'_\calC$ to an invariant of closed morphisms of $\rmadCob_\calC$ by setting
\begin{equation*}
 \rmL'_\calC(\bbM) := \delta^n \rmL'_\calC(M,T)
\end{equation*}
for every closed connected morphism $\bbM = (M,T,n)$, and by setting
\[
 \rmL'_\calC(\bbM_1 \disjun \ldots \disjun \bbM_k) := \prod_{i=1}^k \rmL'_\calC(\bbM_i)
\]
for every disjoint union of closed connected morphisms $\bbM_1, \ldots, \bbM_k$. 

Then, if $\bbSigma$ is an object of $\rmadCob_\calC$, let $\calV(\bbSigma)$ denote the vector space generated by the set $\rmadCob_\calC(\varnothing,\bbSigma)$ of morphisms $\bbM_{\bbSigma} : \varnothing \rightarrow \bbSigma$ of $\rmadCob_\calC$, and let $\calV'(\bbSigma)$ denote the vector space generated by the set $\rmadCob_\calC(\bbSigma,\varnothing)$ of morphisms $\bbM'_{\bbSigma} : \bbSigma \rightarrow \varnothing$ of $\rmadCob_\calC$. Next, we consider the bilinear map
\begin{align*}
 \langle \_,\_ \rangle_{\bbSigma} : \calV'(\bbSigma) \times \calV(\bbSigma) & \to \Bbbk \\*
 (\bbM'_{\bbSigma},\bbM_{\bbSigma}) & \mapsto \rmL'_\calC(\bbM'_{\bbSigma} \circ \bbM_{\bbSigma}).
\end{align*}
State spaces are then defined as quotients with respect to the annihilators
\begin{align*}
 \rmV_\calC(\bbSigma) &:= \calV(\bbSigma) / \calV'(\bbSigma)^\perp, 
 &
 \rmV'_\calC(\bbSigma) &:= \calV'(\bbSigma) / \calV(\bbSigma)^\perp.
\end{align*}
Now, if $\bbM : \bbSigma \to \bbSigma'$ is a morphism of $\rmadCob_\calC$, its associated operators are simply given by
\begin{align*}
 \rmV_\calC(\bbM) : \rmV_\calC(\bbSigma) & \to \rmV_\calC(\bbSigma') 
 &
 \rmV'_\calC(\bbM) : \rmV'_\calC(\bbSigma') & \to \rmV'_\calC(\bbSigma) \\*
 {}[\bbM_{\bbSigma}] & \mapsto [\bbM \circ \bbM_{\bbSigma}], &
 {}[\bbM'_{\bbSigma}] & \mapsto [\bbM'_{\bbSigma} \circ \bbM].
\end{align*}
This defines a pair of functors
\begin{align*}
 \rmV_\calC &: \rmadCob_\calC \to \Vect_\Bbbk,
 & 
 \rmV'_\calC &: (\rmadCob_\calC)^\op \to \Vect_\Bbbk.
\end{align*}
As shown in \cite[Theorem~4.12]{DGGPR19}, $\rmV_\calC$ and $\rmV'_\calC$ are symmetric monoidal, which means they provide TQFTs defined on $\rmadCob_\calC$ and $(\rmadCob_\calC)^\op$, respectively.

\section{2-Category of admissible decorated cobordisms}\label{S:admissible_cobordisms}

In this section, which follows closely \cite[Chapter~2]{D17}, we introduce the symmetric monoidal $2$-cat\-e\-go\-ry $\bfadCob_\calC$ of admissible decorated cobordisms, which will be used as the source $2$-cat\-e\-go\-ry for our ETQFTs. It is convenient to start by introducing the larger symmetric monoidal $2$-cat\-e\-go\-ry $\bfCob_\calC$ of \textit{decorated cobordisms}, which contains $\bfadCob_\calC$ as a sub-$2$-cat\-e\-go\-ry. We stress the fact that $\bfCob_\calC$ is a rigid $2$-cat\-e\-go\-ry, while $\bfadCob_\calC$ is not. See \cite[Appendix~D]{D17} for the relevant definitions and notations concerning symmetric monoidal $2$-cat\-e\-go\-ries.

An \textit{object $\bbGamma$ of $\bfCob_\calC$} is defined to be a closed $1$-dimensional manifold $\Gamma$.

A \textit{$1$-mor\-phism $\bbSigma : \bbGamma \rightarrow \bbGamma'$ of $\bfCob_\calC$} between objects $\bbGamma = \Gamma$ and $\bbGamma' = \Gamma'$ is defined to be a triple $(\Sigma,P,\lambda)$ where: 
\begin{enumerate}
 \item $\Sigma$ is a $2$-dimensional cobordism from $\Gamma$ to $\Gamma'$;
 \item $P \subset \Sigma$ is a finite set of oriented framed points labeled by objects of $\calC$;
 \item $\lambda \subset H_1(\Sigma;\R)$ is a Lagrangian subspace.
\end{enumerate}
We will refer to the set $P \subset \Sigma$ as the \textit{set of marked points} of $\bbSigma$. We recall that, by definition, a \textit{Lagrangian subspace} of $H_1(\Sigma;\R)$ is a maximal isotropic subspace with respect to the transverse intersection form, which is non-degenerate if and only if $\Gamma = \Gamma' = \varnothing$.

A \textit{$2$-mor\-phism $\bbM : \bbSigma \Rightarrow \bbSigma'$ of $\bfCob_\calC$} between $1$-mor\-phisms $\bbSigma,\bbSigma' : \bbGamma \rightarrow \bbGamma'$ is defined to be an equivalence class of triples $(M,T,n)$ where:
\begin{enumerate}
 \item $M$ is a 3-dimensional cobordism with corners from $\Sigma$ to $\Sigma'$;
 \item $T \subset M$ is a bichrome graph from $P$ to $P'$;
 \item $n \in \Z$ is an integer, the signature defect.
\end{enumerate}
Two triples $(M,T,n)$ and $(M',T',n')$ are by definition equivalent if $n = n'$ and if there exists an isomorphism of cobordisms with corners $f : M \rightarrow M'$ satisfying $f(T) = T'$. For the notion of cobordism with corners used here, see \cite[Definition~B.5]{D17}.

The \textit{identity $1$-morphism $\id_{\bbGamma} : \bbGamma \rightarrow \bbGamma$ of an object $\bbGamma$ of $\bfCob_\calC$} is
\[
 \left( I \times \Gamma,\varnothing,H_1(I \times \Gamma;\R) \right),
\]
where $I$ denotes the interval $[0,1]$.

The \textit{identity $2$-morphism $\id_{\bbSigma} : \bbSigma \Rightarrow \bbSigma$ of a $1$-morphism $\bbSigma : \bbGamma \rightarrow \bbGamma'$ of $\bfCob_\calC$} is the equivalence class of
\[
 \left( \Sigma \times I,P \times I,0 \right).
\]

The \textit{horizontal composition $\bbSigma' \circ \bbSigma : \bbGamma \rightarrow \bbGamma''$ of $1$-mor\-phisms $\bbSigma : \bbGamma \rightarrow \bbGamma'$ and $\bbSigma' : \bbGamma' \rightarrow \bbGamma''$ of $\bfCob_\calC$} is
\[
 \left( \Sigma \cup_{\Gamma'} \Sigma',P \cup P',(i_\Sigma)_*(\lambda) + (i_{\Sigma'})_*(\lambda') \right),
\]
where $i_\Sigma : \Sigma \hookrightarrow \Sigma \cup_{\Gamma'} \Sigma'$ and $i_{\Sigma'} : \Sigma' \hookrightarrow \Sigma \cup_{\Gamma'} \Sigma'$ denote inclusions.

The \textit{horizontal composition $\bbM' \circ \bbM : \bbSigma' \circ \bbSigma \Rightarrow \bbSigma''' \circ \bbSigma''$ of $2$-mor\-phisms $\bbM : \bbSigma \Rightarrow \bbSigma''$ and $\bbM' : \bbSigma' \Rightarrow \bbSigma'''$ of $\bfCob_\calC$} between $1$-mor\-phisms $\bbSigma,\bbSigma'' : \bbGamma \rightarrow \bbGamma'$ and $\bbSigma',\bbSigma''' : \bbGamma' \rightarrow \bbGamma''$ is the equivalence class of
\[
 \left( M \cup_{\Gamma' \times I} M',T \cup T', n + n' \right).
\]
For the horizontal gluing of cobordisms with corners, see \cite[Definition~B.7]{D17}.

The \textit{vertical composition $\bbM' \ast \bbM : \bbSigma \Rightarrow \bbSigma''$ of $2$-mor\-phisms $\bbM : \bbSigma \Rightarrow \bbSigma'$ and $\bbM' : \bbSigma' \Rightarrow \bbSigma''$ of $\bfCob_\calC$} between $1$-mor\-phisms $\bbSigma,\bbSigma',\bbSigma'' : \bbGamma \rightarrow \bbGamma'$ is the equivalence class of
\[
 \left( M \cup_{\Sigma'} M',T \cup_{P'} T', n + n' - \mu(M_*(\lambda),\lambda',(M')^*(\lambda'')) \right),
\]
where the push-forward $M_*(\lambda)$ and the pull-back $(M')^*(\lambda'')$ are defined by
\begin{align*}
 M_*(\lambda) &:= \{ x' \in H_1(\Sigma';\R) \mid (j_{\Sigma'})_*(x') \in (j_\Sigma)_* (\lambda) \}, \\*
 (M')^*(\lambda'') &:= \{ x' \in H_1(\Sigma';\R) \mid (j'_{\Sigma'})_*(x') \in (j'_{\Sigma''})_*(\lambda'') \}
\end{align*}
for the embeddings
\[
 j_{\Sigma} : \Sigma \hookrightarrow M, \quad
 j_{\Sigma'} : \Sigma' \hookrightarrow M, \quad
 j'_{\Sigma'} : \Sigma' \hookrightarrow M', \quad
 j'_{\Sigma''} : \Sigma'' \hookrightarrow M'\phantom{,}
\]
induced by horizontal boundary identifications, and where $\mu$ is the \textit{Maslov index} of the three Lagrangian subspaces, whose definition extends word-by-word to the degenerate case $\Gamma \neq \varnothing \neq \Gamma'$, see \cite[Section~C.3]{D17}. For the vertical gluing of cobordisms with corners, see \cite[Definition~B.8]{D17}.

The \textit{tensor unit of $\bfCob_\calC$} is the empty $1$-dimensional manifold $\varnothing$.

The \textit{tensor product $\bbGamma \disjun \bbGamma'$ of objects $\bbGamma$ and $\bbGamma'$ of $\bfCob_\calC$} is the $1$-dimensional manifold $\Gamma \sqcup \Gamma'$.

The \textit{tensor product $\bbSigma \disjun \bbSigma' : \bbGamma \disjun \bbGamma' \rightarrow \bbGamma'' \disjun \bbGamma'''$ of $1$-mor\-phisms $\bbSigma : \bbGamma \rightarrow \bbGamma''$ and $\bbSigma' : \bbGamma' \rightarrow \bbGamma'''$ of $\bfCob_\calC$} is 
\[
 (\Sigma \sqcup \Sigma',P \sqcup P',\lambda \oplus \lambda').
\]

The \textit{tensor product $\bbM \disjun \bbM' : \bbSigma \disjun \bbSigma' \Rightarrow \bbSigma'' \disjun \bbSigma'''$ of $2$-morphisms $\bbM : \bbSigma \Rightarrow \bbSigma''$ and $\bbM' : \bbSigma' \Rightarrow \bbSigma'''$ of $\bfCob_\calC$} is the equivalence class of
\[
 (M \sqcup M',T \sqcup T',n+n').
\]

The symmetric braiding of $\bfCob_\calC$ is completely standard, and it plays no non-trivial role in our construction.

Next, let us define the symmetric monoidal $2$-cat\-e\-go\-ry $\bfadCob_\calC$ of admissible decorated cobordisms. This requires fixing our terminology first. If $\Sigma$ is a $2$-di\-men\-sion\-al cobordism, then we say a set $P \subset \Sigma$ of marked points is \textit{admissible} if it features a projective object of $\calC$ among its labels. A $1$-mor\-phism $\bbSigma = (\Sigma,P,\lambda)$ of $\bfCob_\calC$ is said to be \textit{admissible} if every connected component of $\Sigma$ disjoint from the incoming boundary contains an admissible subset of $P$, and it is said to be \textit{strongly admissible} if the same condition holds more generally for all connected components of $\Sigma$, regardless of their intersection with the incoming boundary. Similarly, a $2$-mor\-phism $\bbM = (M,T,n)$ of $\bfCob_\calC$ is said to be \textit{admissible} if every connected component of $M$ disjoint from the incoming horizontal boundary contains an admissible bichrome subgraph of $T$, and it is said to be \textit{strongly admissible} if the same condition holds more generally for all connected components of $M$, regardless of their intersection with the incoming horizontal boundary.

\begin{definition}\label{D:sym_mon_2-cat_of_adm_cob}
 The \textit{symmetric monoidal $2$-cat\-e\-go\-ry of admissible cobordisms} is the symmetric mo\-n\-oi\-dal sub-$2$-cat\-e\-go\-ry $\bfadCob_\calC$ of $\bfCob_\calC$ whose set of objects coincides with the one of $\bfCob_\calC$, whose categories of morphisms are provided by admissible morphisms of $\bfCob_\calC$, and whose symmetric mo\-n\-oi\-dal structure is directly inherited from the one of $\bfCob_\calC$.
\end{definition}

\section{2-Category of complete linear categories}\label{S:complete_lin_cat}

In this section, which is extracted from \cite[Section~E.2]{D17}, we recall the definition of the symmetric monoidal $2$-cat\-e\-go\-ry $\smash{\coCat_\Bbbk}$ of complete linear categories, which will provide the target $2$-cat\-e\-go\-ry for our ETQFTs. It is useful to start by recalling the definition of the larger symmetric monoidal $2$-cat\-e\-go\-ry $\bfCat_\Bbbk$ of linear categories, which contains $\smash{\coCat_\Bbbk}$ as a sub-$2$-cat\-e\-go\-ry, although not as a monoidal one. This is due to the fact that the restriction of the tensor product of $\bfCat_\Bbbk$ to the sub-$2$-cat\-e\-go\-ry $\coCat_\Bbbk$ does not have $\smash{\coCat_\Bbbk}$ as target. Again, a reference for all the relevant definitions and notations concerning symmetric monoidal $2$-cat\-e\-go\-ries is provided by \cite[Appendix~D]{D17}.

An object $A$ of $\bfCat_\Bbbk$ is a \textit{linear category}, that is, a $\Vect_\Bbbk$-enriched category. A $1$-mor\-phism $F : A \rightarrow A'$ of $\bfCat_\Bbbk$ between linear categories $A$ and $A'$ is a \textit{linear functor}, that is, a $\Vect_\Bbbk$-enriched functor. A $2$-mor\-phism $\eta : F \Rightarrow F'$ of $\bfCat_\Bbbk$ between linear functors $F,F' : A \rightarrow A'$ is simply an ordinary natural transformation. The horizontal composition of $\bfCat_\Bbbk$ is denoted $\circ$, and it is given by the standard composition of functors and by the horizontal composition of natural transformations. Similarly, the vertical composition of $\bfCat_\Bbbk$ is denoted $\ast$, and it is given by the standard vertical composition of natural transformations. The tensor unit of $\bfCat_\Bbbk$ is denoted $\Bbbk$, and it is given by any linear category having a single simple object, that is, an object whose endomorphism space is isomorphic to the base field $\Bbbk$. The tensor product of $\bfCat_\Bbbk$ is denoted $\sqtimes$, and it is given by the standard $\Vect_\Bbbk$-en\-riched tensor product of linear categories. The symmetric braiding of $\bfCat_{\Bbbk}$ is again completely standard, and will not be discussed.

We say a linear category is \textit{Cauchy complete}, or simply \textit{complete} for short, if it is closed under finite direct sums of objects, and if all its idempotent endomorphisms are split. The \textit{$2$-cat\-e\-go\-ry of complete linear categories} is then defined to be the full sub-$2$-cat\-e\-go\-ry $\coCat_\Bbbk$ of $\bfCat_{\Bbbk}$ whose objects are complete linear categories.

If a linear category is not complete, then there is a canonical \textit{completion} procedure which consists in manually adding direct sums of objects, as well as subobjects corresponding to images of idempotent endomorphisms. The construction is carried out in two steps.

First of all, the \textit{additive completion $\Mat(A)$ of a linear category $A$} is the linear category whose objects are vectors
\[
 \bigoplus_{i \in X} x_i := \left( \begin{array}{c}
                                    \vdots \\ x_i \\ \vdots
                                   \end{array} \right)
\]
with $x_i \in A$ for every $i$ in some finite ordered set $X$, and whose morphisms from $\smash{\displaystyle \bigoplus_{i \in X} x_i}$ to $\smash{\displaystyle \bigoplus_{i \in Y} y_i}$ are matrices
\[
 \left( f_{ij} \right)_{(i,j) \in Y \times X} := 
 \left( \begin{array}{ccc}
         \ddots & \vdots & \bdots \\
         \cdots & f_{ij} & \cdots \\
         \bdots & \vdots & \ddots
        \end{array} \right)
\]
with $f_{ij} \in A(x_j,y_i)$ for every $(i,j) \in Y \times X$. The identity of a vector-object $\smash{\displaystyle \bigoplus_{i \in X} x_i \in \Mat(A)}$ is the matrix-morphism
\[
 \left( \delta_{ij} \cdot \id_{x_i} \right)_{(i,j) \in X^2} \in \End_{\Mat(A)} \left( \bigoplus_{i \in X} x_i \right),
\] 
and the composition of a matrix-morphism
\[
 \left( f_{ij} \right)_{(i,j) \in Y \times X} 
 \in \Mat(A) \left( \bigoplus_{i \in X} x_i, \bigoplus_{i \in Y} y_i \right)
\]
with a matrix-morphism
\[
 \left( g_{ij} \right)_{(i,j) \in Z \times Y} 
 \in \Mat(A) \left( \bigoplus_{i \in Y} y_i, \bigoplus_{i \in Z} z_i \right)
\]
is the matrix-morphism
\[
 \left( \sum_{k \in Y} g_{ik} \circ f_{kj} \right)_{(i,j) \in Z \times X} 
 \in \Mat(A) \left( \bigoplus_{i \in X} x_i, \bigoplus_{i \in Z} z_i \right).
\]
Every object of $A$ can be canonically interpreted as a vector-object of $\Mat(A)$ with a single entry.

Next, the \textit{idempotent completion $\Kar(A)$ of a linear category $A$}, also known as the \textit{Karoubi envelope}, is the linear category whose objects are pairs $\im(p) := (x,p)$ with $x \in A$ and $p \in \End_A(x)$ satisfying $p \circ p = p$, and whose morphisms from $\im(p)$ to $\im(q)$ are $f \in A(x,y)$ satisfying $f \circ p = f = q \circ f$. The identity of an image-object $\im(p) \in \Kar(A)$ is the idempotent endomorphism $p \in \End_{\Kar(A)}(\im(p))$ itself, and composition is directly inherited from the composition in $A$. Again, every object of $A$ can be canonically interpreted as the image-object of $\Kar(A)$ determined by its identity endomorphism.

The \textit{completion $\hat{A}$ of a linear category $A$} is then defined to be the complete linear category $\Kar(\Mat(A))$.

Clearly, every linear functor $F : A \rightarrow A'$ extends canonically to a linear functor $\hat{F} : \hat{A} \rightarrow \hat{A}'$, called the \textit{completion of $F$}, and similarly, every natural transformation $\eta : F \Rightarrow F'$ extends canonically to a natural transformation $\hat{\eta} : \hat{F} \Rightarrow \hat{F}'$, called the \textit{completion of $\eta$}, see \cite[Section~E.2]{D17} for both definitions.

\begin{proposition}\label{P:co_strict_2-funct}
 The operation of completion of linear categories, linear functors, and natural transformations defines a strict $2$-func\-tor $\cmpl : \bfCat_{\Bbbk} \rightarrow \coCat_\Bbbk$.
\end{proposition}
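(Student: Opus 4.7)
The plan is to factor $\cmpl$ as the composition $\Kar \circ \Mat$ of two simpler operations, and to show separately that each of $\Mat$ and $\Kar$ defines a strict endo-$2$-functor of $\bfCat_\Bbbk$; their composition is then automatically a strict $2$-functor, and it lands in the sub-$2$-category $\coCat_\Bbbk$ because $\Mat(A)$ acquires finite direct sums by concatenation of vector-objects, because $\Kar$ preserves these direct sums by taking images of block-diagonal idempotents, and because $\Kar(B)$ always splits all of its idempotents by the very definition of the Karoubi envelope. The work thus reduces to describing $\Mat$ and $\Kar$ on $1$- and $2$-morphisms, and checking on the nose preservation of identities, horizontal composition of $1$-morphisms, vertical composition of $2$-morphisms, and horizontal composition of $2$-morphisms.

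For a linear functor $F : A \to A'$, the extension $\Mat(F) : \Mat(A) \to \Mat(A')$ acts by $\bigoplus_{i \in X} x_i \mapsto \bigoplus_{i \in X} F(x_i)$ on vector-objects and by $(f_{ij}) \mapsto (F(f_{ij}))$ on matrix-morphisms, while $\Kar(F) : \Kar(A) \to \Kar(A')$ acts by $\im(p) \mapsto \im(F(p))$ on image-objects and by $f \mapsto F(f)$ on morphisms. Both assignments being purely entrywise, the identities $\Mat(\id_A) = \id_{\Mat(A)}$, $\Kar(\id_A) = \id_{\Kar(A)}$, $\Mat(F' \circ F) = \Mat(F') \circ \Mat(F)$, and $\Kar(F' \circ F) = \Kar(F') \circ \Kar(F)$ all hold on the nose. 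For a natural transformation $\eta : F \Rightarrow F'$, I would declare the component of $\Mat(\eta)$ at $\bigoplus_{i \in X} x_i$ to be the diagonal matrix $(\delta_{ij} \cdot \eta_{x_i})_{(i,j) \in X^2}$, and the component of $\Kar(\eta)$ at $\im(p)$ with $p \in \End_A(x)$ to be the morphism $\eta_x \circ F(p) = F'(p) \circ \eta_x$, the two expressions being equal and providing a well-defined morphism $\im(F(p)) \to \im(F'(p))$ of $\Kar(A')$ by naturality of $\eta$ together with $p^2 = p$. Naturality of $\Mat(\eta)$ and $\Kar(\eta)$ is then just naturality of $\eta$ applied entrywise, and vertical composition is preserved strictly because $(\eta' \ast \eta)_x = \eta'_x \circ \eta_x$ in $A'$ is compatible with the formulas above.

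The most delicate check is strict preservation of horizontal composition of $2$-morphisms: for $\eta : F \Rightarrow F'$ with $F, F' : A \to A'$ and $\vartheta : G \Rightarrow G'$ with $G, G' : A' \to A''$, the two sides of $\cmpl(\vartheta \circ \eta) = \cmpl(\vartheta) \circ \cmpl(\eta)$ evaluated at an object $\im(p) \in \hat{A}$ both reduce, using naturality of $\vartheta$ applied to $\eta_x \circ F(p)$ together with the idempotency of $F(p)$, to the common value $G'(\eta_x) \circ \vartheta_{F(x)} \circ G(F(p))$, and an analogous entrywise computation handles the matrix-component for $\Mat$. Once this is in place, $\cmpl = \Kar \circ \Mat$ is a strict $2$-functor $\bfCat_\Bbbk \to \coCat_\Bbbk$, as claimed.
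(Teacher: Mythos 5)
Your proof is correct, and it takes the approach that the paper itself sets up: the completion is defined as $\Kar \circ \Mat$, and you verify that each of $\Mat$ and $\Kar$ is a strict endo-$2$-functor of $\bfCat_\Bbbk$, with the composite landing in $\coCat_\Bbbk$. The paper defers the proof to \cite[Proposition~E.1]{D17}, so a detailed comparison is not possible here, but your explicit formulas for $\Mat(\eta)$ and $\Kar(\eta)$, the check that $\eta_x \circ F(p)$ is a well-defined morphism $\im(F(p)) \to \im(F'(p))$, and the computation reducing both sides of the interchange law to $G'(\eta_x) \circ \vartheta_{F(x)} \circ G(F(p))$ are all sound and constitute the expected argument.
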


See \cite[Proposition~E.1]{D17} for a proof. This result allows us to extend the operation of completion to $2$-func\-tors with target $\bfCat_\Bbbk$, as well as to $2$-trans\-for\-ma\-tions between them. Indeed, for every $2$-category $\bcalC$, every $2$-func\-tor $\bfF : \bcalC \rightarrow \bfCat_\Bbbk$ automatically induces a $2$-func\-tor $\hat{\bfF} : \bcalC \rightarrow \coCat_\Bbbk$, which we call the \textit{completion} of $\bfF$, and which is simply defined as the composition $\cmpl \circ \bfF$. Similarly, every $2$-trans\-for\-ma\-tion $\bfsigma : \bfF \Rightarrow \bfF'$ between $2$-func\-tors $\bfF,\bfF' : \bcalC \rightarrow \bfCat_\Bbbk$ induces a $2$-trans\-for\-ma\-tion $\hat{\bfsigma} : \hat{\bfF} \Rightarrow \hat{\bfF}'$, which we call the \textit{completion} of $\bfsigma$, and which is simply defined as the left whiskering $\cmpl \triangleright \bfsigma$, see \cite[Definition~D.6]{D17}.

\begin{definition}\label{D:sym_mon_2-cat_of_co_lin_cat}
 The \textit{symmetric monoidal $2$-cat\-e\-go\-ry of complete linear categories} is obtained from the full sub-$2$-cat\-e\-go\-ry $\coCat_\Bbbk$ of $\bfCat_{\Bbbk}$ by specifying the completion $\hat{\Bbbk}$ of the tensor unit $\Bbbk$ of $\bfCat_\Bbbk$ as a tensor unit, and by specifying the completion $\csqtimes$ of the tensor product $\sqtimes$ of $\bfCat_\Bbbk$ as a tensor product.
\end{definition}

\begin{proposition}\label{P:completion}
 The strict $2$-functor $\cmpl : \bfCat_{\Bbbk} \rightarrow \coCat_{\Bbbk}$ is symmetric mo\-n\-oi\-dal.
\end{proposition}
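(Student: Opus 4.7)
The plan is to equip the strict $2$-functor $\cmpl$ with the coherence data of a symmetric monoidal $2$-functor, namely a unit comparison $\cmpl_0 : \hat\Bbbk \to \cmpl(\Bbbk)$ and a tensor product comparison realized as an equivalence between $\cmpl(A) \csqtimes \cmpl(B)$ and $\cmpl(A \sqtimes B)$, and then to verify the pentagon, triangle, and hexagon axioms. Since $\hat\Bbbk$ is defined in Definition~\ref{D:sym_mon_2-cat_of_co_lin_cat} to be $\cmpl(\Bbbk)$, the unit comparison is tautologically the identity, so essentially all of the work is concentrated in the tensor product comparison and its coherence.

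To construct this equivalence, I would start from the canonical embeddings $\iota_A : A \to \hat A$ and $\iota_B : B \to \hat B$ that send an object to itself, viewed as a length-one vector equipped with its identity idempotent. The linear functor $\iota_A \sqtimes \iota_B : A \sqtimes B \to \hat A \sqtimes \hat B$ then induces, by Proposition~\ref{P:co_strict_2-funct}, a linear functor
\[
 \cmpl(\iota_A \sqtimes \iota_B) : \cmpl(A \sqtimes B) \to \cmpl(\hat A \sqtimes \hat B) = \cmpl(A) \csqtimes \cmpl(B),
\]
whose quasi-inverse would play the role of $\cmpl_2(A,B)$. To show this functor is an equivalence, I would verify essential surjectivity by observing that any object of $\cmpl(\hat A \sqtimes \hat B)$ is obtained by splitting an idempotent on a formal matrix of elementary tensors $\im(p) \boxtimes \im(q)$, with $p$ an idempotent matrix with entries in $A$ and $q$ an idempotent matrix with entries in $B$, and that each such elementary tensor is canonically isomorphic to the splitting, inside $\cmpl(A \sqtimes B)$, of the evident idempotent matrix $p \otimes q$ with entries in $A \sqtimes B$. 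Full faithfulness would then follow by expanding the morphism spaces on both sides via $\Mat$ and $\Kar$ and reducing them to Hom spaces of $A \sqtimes B$, which by the very definition of the $\Vect_\Bbbk$-enriched tensor product decompose as $A(a,a') \otimes_\Bbbk B(b,b')$ on both sides.

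Coherence and symmetry then follow formally from the universality of completion. Both sides of the pentagon, triangle, and hexagon axioms in $\coCat_\Bbbk$ are $2$-cells between linear functors whose sources are completions such as $\cmpl(A \sqtimes B \sqtimes C)$ or $\cmpl(A \sqtimes B)$, and which become equal when precomposed with the canonical embeddings from the uncompleted sources, since the corresponding axioms in $\bfCat_\Bbbk$ hold (the tensor product of linear categories being manifestly symmetric monoidal). Because $\cmpl$ extends functors and $2$-cells uniquely up to unique isomorphism, by the characterization of $\cmpl$ as the reflection onto $\coCat_\Bbbk$ underlying Proposition~\ref{P:co_strict_2-funct}, these axioms transport to the completed side.

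The main obstacle will be executing the essential surjectivity and full faithfulness checks carefully: one must keep track of how matrices of idempotents split over $\sqtimes$, verify that the isomorphism $\im(p) \boxtimes \im(q) \cong \im(p \otimes q)$ is natural in $p$ and $q$, and correctly handle the outer idempotent defining a general object of $\cmpl(\hat A \sqtimes \hat B)$. Once this is in place, the remaining coherence verifications become essentially formal.
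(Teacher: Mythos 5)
Because the paper outsources the actual proof to \cite[Section~E.5]{D17}, a line-by-line comparison is not possible from the present text; I can only assess whether your approach is viable and whether it engages with the real difficulties.

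Your construction of the tensor comparison is sound: the canonical embedding $\iota_A \sqtimes \iota_B : A \sqtimes B \to \hat A \sqtimes \hat B$ is fully faithful with dominating image, so $\cmpl(\iota_A \sqtimes \iota_B)$ is an equivalence, and you could in fact get this in one stroke from Proposition~\ref{P:Morita_equivalence} rather than re-deriving essential surjectivity and full faithfulness by hand with matrices of idempotents. You should also note that $\iota$ is strictly $2$-natural, which is what licenses choosing a pseudonatural quasi-inverse for the family $\cmpl(\iota_A \sqtimes \iota_B)$; pointwise quasi-inverses alone do not assemble into a pseudonatural transformation.

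The gap is in the last paragraph. You treat the coherence (pentagon, triangle, hexagon) as ``formal'' because the axioms hold in $\bfCat_\Bbbk$ and embeddings detect equality of $2$-cells. But the axioms for a symmetric monoidal $2$-functor are equations between \emph{modifications}, and they involve the associator, unitors, and braiding of the target $\coCat_\Bbbk$, which are not those of $\bfCat_\Bbbk$: even though $\sqtimes$ is strictly associative, $\csqtimes$ is not, since $(A \csqtimes B) \csqtimes C = \widehat{\widehat{A\sqtimes B}\sqtimes C}$ and $A \csqtimes (B \csqtimes C) = \widehat{A\sqtimes\widehat{B\sqtimes C}}$ are only equivalent, with the equivalence given by the very same kind of completion-of-$\iota$-functor you use for $\cmpl_2$. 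So there is a genuine chicken-and-egg issue: the coherence data of $\coCat_\Bbbk$ must first be constructed and shown to satisfy its own axioms, and only then can one compare it against the images under $\cmpl$ of the trivial coherence of $\bfCat_\Bbbk$. Your detection principle (two modifications between functors out of a completion agree if they agree after precomposing with $\iota$) is the right tool, but you must actually spell out the coherence $2$-cells on both sides of each axiom, track how they interact with the nested completions, and verify that the chosen quasi-inverses can be arranged compatibly. As written, this part is a plan rather than a proof, and it is precisely the content that \cite[Section~E.5]{D17} supplies.
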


See \cite[Section~E.5]{D17} for a proof. The completion $2$-func\-tor $\cmpl$ allows us to focus on objects of $\bfCat_{\Bbbk}$, provided we consider them to be equivalent whenever their completions are equivalent in $\smash{\coCat_{\Bbbk}}$. This motivates the next definition.

We say two linear categories $A$ and $A'$ are \textit{c-equivalent} if their completions $\smash{\hat{A}}$ and $\smash{\hat{A}'}$ are equivalent, and we say a linear functor $F : A \rightarrow A'$ is a \textit{c-equivalence} if its completion $\smash{\hat{F} : \hat{A} \rightarrow \hat{A}'}$ is an equivalence. Then, let us give a very useful c-equivalence criterion. We recall that a set $D$ of objects of a linear category $A$ is said to be a \textit{dominating set} if for every $x \in A$ there exist objects $x_1, \ldots, x_m \in D$ and morphisms $f_i \in A(x,x_i)$ and $f'_i \in A(x_i,x)$ satisfying
\[
 \id_x = \sum_{i=1}^m f'_i \circ f_i,
\]
in which case we also say \textit{$D$ dominates $A$}.

\begin{proposition}\label{P:Morita_equivalence}
 Let $F : A \rightarrow A'$ be a fully faithful linear functor. If the set of objects $F(A)$ dominates $A'$, then $F$ is a c-equivalence.
\end{proposition}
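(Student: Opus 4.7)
The plan is to establish that $\hat{F} : \hat{A} \to \hat{A}'$ is both fully faithful and essentially surjective, which by definition of c-equivalence suffices. Full faithfulness is essentially formal: $\Mat(F)$ inherits full faithfulness from $F$ because matrix hom-spaces decompose as direct products of entry hom-spaces, and passing to the Karoubi envelope preserves full faithfulness since morphisms in $\Kar(B)(\im(p),\im(q))$ are exactly those morphisms $f$ in $B$ satisfying $f \circ p = f = q \circ f$, a condition cut out by an idempotent operator on hom-spaces which is preserved by any fully faithful functor sending idempotents to idempotents.

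For essential surjectivity, a general object of $\hat{A}'$ is $\im(p')$ for some idempotent $p' \in \End_{\Mat(A')}\bigl(\bigoplus_j y_j\bigr)$ with each $y_j \in A'$. I would reduce the problem to realizing a single object $y \in A'$ as $\hat{F}(Y)$ for some $Y \in \hat{A}$, up to isomorphism in $\hat{A}'$: granted this for every $y_j$, the direct sum $\bigoplus_j y_j$ is isomorphic in $\hat{A}'$ to $\hat{F}\bigl(\bigoplus_j Y_j\bigr)$, the idempotent $p'$ transports across this isomorphism to an idempotent on an object in the essential image of $\hat{F}$, and full faithfulness of $\hat{F}$ then lifts it uniquely to an idempotent on $\bigoplus_j Y_j$ whose image is mapped to $\im(p')$.

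To realize a single $y \in A'$, I would invoke the domination hypothesis to produce objects $x_1, \ldots, x_m \in A$ together with morphisms $g_i : y \to F(x_i)$ and $g'_i : F(x_i) \to y$ satisfying $\sum_i g'_i \circ g_i = \id_y$. Then I would set $e := (g_i \circ g'_k)_{(i,k)} \in \End_{\Mat(A')}\bigl(\bigoplus_i F(x_i)\bigr)$. A short entrywise calculation, which is the one step not reducible to pure bookkeeping, shows both that $e \circ e = e$ and that the column $(g_i)_i$ and the row $(g'_i)_i$ exhibit an isomorphism $y \cong \im(e)$ in $\hat{A}'$. Because $F$ is fully faithful, each entry $g_i \circ g'_k : F(x_k) \to F(x_i)$ lifts uniquely to a morphism in $A$, assembling into a matrix $\tilde{e} \in \End_{\Mat(A)}\bigl(\bigoplus_i x_i\bigr)$ with $\Mat(F)(\tilde{e}) = e$; the idempotency of $e$ forces idempotency of $\tilde{e}$ via full faithfulness of $\Mat(F)$, so $\im(\tilde{e}) \in \hat{A}$ is well defined and $\hat{F}(\im(\tilde{e})) = \im(e) \cong y$ as required.

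The main obstacle---if it deserves the name---is the entrywise verification that $e^2 = e$ and that $(g_i)_i$, $(g'_i)_i$ split $y$ off $\im(e)$; everything else is formal manipulation of the $\Mat$ and $\Kar$ constructions combined with the bijection on hom-spaces guaranteed by full faithfulness of $F$.
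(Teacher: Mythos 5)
The paper itself contains no proof of this statement; it simply cites \cite[Proposition~E.3]{D17}. Your argument is the standard (and, as far as I can tell, the intended) one for this Morita-type result: full faithfulness of $\hat{F}$ is formal from the $\Mat$ and $\Kar$ constructions, and essential surjectivity comes from realizing each $y \in A'$ as $\im(e)$ for the idempotent $e = (g_i \circ g'_k)_{i,k}$ built from a domination datum, lifting $e$ along $\Mat(F)$, and then handling a general $\im(p')$ by transporting $p'$ along the resulting isomorphism $\bigoplus_j y_j \cong \hat{F}(\bigoplus_j Y_j)$ and lifting the transported idempotent via full faithfulness of $\hat{F}$. All the entrywise checks ($e^2 = e$, that the column $(g_i)_i$ and row $(g'_i)_i$ are morphisms in $\Kar(\Mat(A'))$ with the correct composites, and that $\hat{F}$ preserves finite direct sums) go through as you describe, so the proof is correct and complete.
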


See \cite[Proposition~E.3]{D17} for a proof.

\section{Extended universal construction}\label{S:extended_universal_construction}

In this section, which is adapted from \cite[Section~3.4]{D17}, we recall how the TQFT $\rmV_\calC : \rmadCob_\calC \rightarrow \Vect_\Bbbk$ can be extend to a pair of 2-func\-tors 
\begin{align*}
 \bfA_\calC &: \bfadCob_\calC \rightarrow \bfCat_\Bbbk,
 &
 \bfA'_\calC &: (\bfadCob_\calC)^\op \rightarrow \bfCat_\Bbbk
\end{align*}
following a procedure called the \textit{extended universal construction}, which is inspired by the universal construction of \cite{BHMV95}. In the following, we quickly recall the layer-by-layer construction of both $\bfA_\calC$ and $\bfA'_\calC$.

Let us start by considering an object $\bbGamma$ of $\bfadCob_\calC$. Let $\calA(\bbGamma)$ denote the linear category generated by the category $\bfadCob_\calC(\varnothing,\bbGamma)$ of $1$-mor\-phisms $\bbSigma_{\bbGamma} : \varnothing \rightarrow \bbGamma$ and $2$-mor\-phisms $\bbM_{\bbGamma} : \bbSigma_{\bbGamma} \Rightarrow \bbSigma''_{\bbGamma}$ of $\bfadCob_\calC$, and let $\calA'(\bbGamma)$ denote the linear category generated by the category $\bfadCob_\calC(\bbGamma,\varnothing)$ of $1$-mor\-phisms $\bbSigma'_{\bbGamma} : \bbGamma \rightarrow \varnothing$ and $2$-mor\-phisms $\bbM'_{\bbGamma} : \bbSigma'_{\bbGamma} \Rightarrow \bbSigma'''_{\bbGamma}$ of $\bfadCob_\calC$. Here, recall that the linear category generated by another category is simply defined as the linear category with the same set of objects, and with vector spaces of morphisms generated by the corresponding sets of morphisms. Next, we consider the bilinear functor
\[
 \langle \_ , \_ \rangle_{\bbGamma} : \calA'(\bbGamma) \times \calA(\bbGamma) \to \Vect_\Bbbk,
\]
sending every object $(\bbSigma'_{\bbGamma}, \bbSigma_{\bbGamma}) \in \calA'(\bbGamma) \times \calA(\bbGamma)$ to the state space
\[
 \rmV_\calC(\bbSigma'_{\bbGamma} \circ \bbSigma_{\bbGamma}) \in \Vect_\Bbbk,
\]
and sending every morphism $(\bbM'_{\bbGamma},\bbM_{\bbGamma}) \in \calA'(\bbGamma)(\bbSigma'_{\bbGamma},\bbSigma'''_{\bbGamma}) \times \calA(\bbGamma)(\bbSigma_{\bbGamma},\bbSigma''_{\bbGamma})$ to the linear operator
\[
 \rmV_\calC(\bbM'_{\bbGamma} \circ \bbM_{\bbGamma}) : \rmV_\calC(\bbSigma'_{\bbGamma} \circ \bbSigma_{\bbGamma}) \to \rmV_\calC(\bbSigma'''_{\bbGamma} \circ \bbSigma''_{\bbGamma}).
\] 
The annihilator of $\calA'(\bbGamma)$ with respect to $\langle \_ , \_ \rangle_{\bbGamma}$ is the linear congruence $\calA'(\bbGamma)^{\perp}$ on $\calA(\bbGamma)$ defined by
\[
 \calA'(\bbGamma)^{\perp}(\bbSigma_{\bbGamma},\bbSigma''_{\bbGamma}) := \{ \bbM_{\bbGamma} \in \calA(\bbGamma)(\bbSigma_{\bbGamma},\bbSigma''_{\bbGamma}) \mid \langle \id_{\bbSigma'_{\bbGamma}},\bbM_{\bbGamma} \rangle = 0 \Forall \bbSigma'_{\bbGamma} \in \calA'(\bbGamma) \},
\]
and analogously the annihilator of $\calA(\bbGamma)$ with respect to $\langle \_ , \_ \rangle_{\bbGamma}$ is the linear congruence $\calA(\bbGamma)^{\perp}$ on $\calA'(\bbGamma)$ defined by
\[
 \calA(\bbGamma)^{\perp}(\bbSigma'_{\bbGamma},\bbSigma'''_{\bbGamma}) := \{ \bbM'_{\bbGamma} \in \calA'(\bbGamma)(\bbSigma'_{\bbGamma},\bbSigma'''_{\bbGamma}) \mid \langle \bbM'_{\bbGamma},\id_{\bbSigma_{\bbGamma}} \rangle = 0 \Forall \bbSigma_{\bbGamma} \in \calA(\bbGamma) \}.
\]
Here, the term congruence is used in the sense of \cite[Section~II.8]{M71}. Then, the image of $\bbGamma$ under $\bfA_\calC$ is defined to be the linear category
\[
 \bfA_\calC(\bbGamma) := \calA(\bbGamma)/\calA'(\bbGamma)^{\perp},
\]
while the image of $\bbGamma$ under $\bfA'_\calC$ is defined to be the linear category
\[
 \bfA'_\calC(\bbGamma) := \calA'(\bbGamma)/\calA(\bbGamma)^{\perp}.
\]
Here, recall that the quotient of a linear category with respect to a linear congruence is simply defined as the linear category with the same set of objects, and with vector spaces of morphisms obtained as quotients of the original ones with respect to the linear subspaces determined by the linear congruence.

Next, let us consider a 1-morphism $\bbSigma : \bbGamma \rightarrow \bbGamma'$ of $\bfadCob_\calC$. The image of $\bbSigma$ under $\bfA_\calC$ is defined to be the linear functor
\[
 \bfA_\calC(\bbSigma) : \bfA_\calC(\bbGamma) \rightarrow \bfA_\calC(\bbGamma')
\]
sending every object $\bbSigma_{\bbGamma} \in \bfA_\calC(\bbGamma)$ to the object
\[
 \bbSigma \circ \bbSigma_{\bbGamma} \in \bfA_\calC(\bbGamma'),
\]
and every morphism $\left[ \bbM_{\bbGamma} \right] \in \bfA_\calC(\bbGamma)(\bbSigma_{\bbGamma},\bbSigma''_{\bbGamma})$ to the morphism
\[
 \left[ \id_{\bbSigma} \circ \bbM_{\bbGamma} \right] \in \bfA_\calC(\bbGamma')(\bbSigma \circ \bbSigma_{\bbGamma},\bbSigma \circ \bbSigma''_{\bbGamma}),
\]
while the image of $\bbSigma$ under $\bfA'_\calC$ is defined to be the linear functor
\[
 \bfA'_\calC(\bbSigma) : \bfA'_\calC(\bbGamma') \rightarrow \bfA'_\calC(\bbGamma)
\]
sending every object $\bbSigma'_{\bbGamma'} \in \bfA'_\calC(\bbGamma')$ to the object 
\[
 \bbSigma'_{\bbGamma'} \circ \bbSigma \in \bfA'_\calC(\bbGamma),
\]
and every morphism $\left[ \bbM'_{\bbGamma'} \right] \in \bfA'_\calC(\bbGamma')(\bbSigma'_{\bbGamma'},\bbSigma'''_{\bbGamma'})$ to the morphism
\[
 \left[ \bbM'_{\bbGamma'} \circ \id_{\bbSigma} \right] \in \bfA'_\calC(\bbGamma)(\bbSigma'_{\bbGamma'} \circ \bbSigma, \bbSigma'''_{\bbGamma'} \circ \bbSigma).
\]

Finally, let us consider a 2-morphism $\bbM : \bbSigma \Rightarrow \bbSigma'$ of $\bfadCob_\calC$ between 1-morphisms $\bbSigma, \bbSigma' : \bbGamma \rightarrow \bbGamma'$. The image of $\bbM$ under $\bfA_\calC$ is defined to be the natural transformation
\[
 \bfA_\calC(\bbM) : \bfA_\calC(\bbSigma) \Rightarrow \bfA_\calC(\bbSigma')
\]
associating with every object $\bbSigma_{\bbGamma} \in \bfA_\calC(\bbSigma)$ the morphism 
\[
 \left[ \bbM \circ \id_{\bbSigma_{\bbGamma}} \right] \in \bfA_\calC(\bbGamma')(\bbSigma \circ \bbSigma_{\bbGamma},\bbSigma' \circ \bbSigma_{\bbGamma}),
\]
while the image of $\bbM$ under $\bfA'_\calC$ is defined to be the natural transformation
\[
 \bfA'_\calC(\bbM) : \bfA'_\calC(\bbSigma) \Rightarrow \bfA'_\calC(\bbSigma')
\]
associating with every object $\bbSigma'_{\bbGamma'} \in \bfA'_\calC(\bbSigma)$ the morphism
\[
 \left[ \id_{\bbSigma'_{\bbGamma'}} \circ \bbM \right] \in \bfA'_\calC(\bbGamma)(\bbSigma'_{\bbGamma'} \circ \bbSigma,\bbSigma'_{\bbGamma'} \circ \bbSigma').
\]

In Section~\ref{S:monoidality}, we will show that the $2$-func\-tor $\hat{\bfA}_\calC$ is an ETQFT, in the sense that supports the structure of a symmetric monoidal $2$-func\-tor as in \cite[Definition~D.16]{D17}. In the meantime, some of its features can be derived straight away. Indeed, let us consider the $2$-trans\-for\-ma\-tion
\[
 \bfmu : \sqtimes \circ \left( \bfA_\calC \times \bfA_\calC \right) \Rightarrow \bfA_\calC \circ \disjun
\]
defined as follows: for all objects $\bbGamma$ and $\bbGamma'$ of $\bfadCob_\calC$, we specify the linear functor
\[
 \bfmu_{\bbGamma,\bbGamma'} : \bfA_\calC(\bbGamma) \sqtimes \bfA_\calC(\bbGamma') \rightarrow \bfA_\calC(\bbGamma \disjun \bbGamma')
\]
sending every object $(\bbSigma_{\bbGamma},\bbSigma_{\bbGamma'})$ of $\bfA_\calC(\bbGamma) \sqtimes \bfA_\calC(\bbGamma')$ to the object $\bbSigma_{\bbGamma} \disjun \bbSigma_{\bbGamma'}$ of $\bfA_\calC(\bbGamma \disjun \bbGamma')$, and every morphism 
\[
 \left[ \bbM_{\bbGamma} \right] \sqtimes \left[ \bbM_{\bbGamma'} \right]
\]
of $\bfA_\calC(\bbGamma)(\bbSigma_{\bbGamma},\bbSigma''_{\bbGamma}) \sqtimes \bfA_\calC(\bbGamma')(\bbSigma_{\bbGamma'},\bbSigma''_{\bbGamma'})$ 
to the morphism
\[
 \left[ \bbM_{\bbGamma} \disjun \bbM_{\bbGamma'} \right]
\]
of $\bfA_\calC(\bbGamma \disjun \bbGamma')(\bbSigma_{\bbGamma} \disjun \bbSigma_{\bbGamma'},\bbSigma''_{\bbGamma} \disjun \bbSigma''_{\bbGamma'})$. Next, for all $1$-mor\-phisms $\bbSigma : \bbGamma \rightarrow \bbGamma''$ and $\bbSigma' : \bbGamma' \rightarrow \bbGamma'''$ of $\bfadCob_\calC$, we specify the natural transformation 
\[
 \bfmu_{\bbSigma,\bbSigma'} : \bfA_\calC(\bbSigma \disjun \bbSigma') \circ \bfmu_{\bbGamma,\bbGamma'} \Rightarrow \bfmu_{\bbGamma'',\bbGamma'''} \circ \left( \bfA_\calC(\bbSigma) \sqtimes \bfA_\calC(\bbSigma') \right)
\]
simply associating with every object $(\bbSigma_{\bbGamma},\bbSigma_{\bbGamma'})$ of $\bfA_\calC(\bbGamma) \sqtimes \bfA_\calC(\bbGamma')$ the morphism of $\bfA_\calC(\bbGamma'' \disjun \bbGamma''')((\bbSigma \disjun \bbSigma') \circ (\bbSigma_{\bbGamma} \disjun \bbSigma_{\bbGamma'}),{(\bbSigma \circ \bbSigma_{\bbGamma})} \disjun {(\bbSigma' \circ \bbSigma_{\bbGamma'})})$ defined as the equivalence class of the coherence $2$-mor\-phism provided by the monoidal structure of the $2$-cat\-e\-go\-ry $\bfadCob_\calC$.

\begin{proposition}\label{P:lax_monoidality_ETQFT}
 The linear functor $\bfmu_{\bbGamma,\bbGamma'} : \bfA_\calC(\bbGamma) \sqtimes \bfA_\calC(\bbGamma') \rightarrow \bfA_\calC(\bbGamma \disjun \bbGamma')$ is faithful for all $\bbGamma, \bbGamma' \in \bfadCob_\calC$.
\end{proposition}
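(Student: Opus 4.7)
My plan is to establish injectivity of $\bfmu_{\bbGamma, \bbGamma'}$ on each morphism space by combining the detection functionals provided by the annihilator quotients with the symmetric monoidality of the TQFT $\rmV_\calC$ recalled in Section~\ref{S:TQFT}.

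Fix $\bbSigma_\bbGamma, \bbSigma''_\bbGamma \in \calA(\bbGamma)$ and $\bbSigma_{\bbGamma'}, \bbSigma''_{\bbGamma'} \in \calA(\bbGamma')$, and write $U = \bfA_\calC(\bbGamma)(\bbSigma_\bbGamma, \bbSigma''_\bbGamma)$ and $W = \bfA_\calC(\bbGamma')(\bbSigma_{\bbGamma'}, \bbSigma''_{\bbGamma'})$. By the very definition of $\calA'(\bbGamma)^\perp$, the assignment $[\bbM_\bbGamma] \mapsto \bigl(\rmV_\calC(\id_{\bbSigma'_\bbGamma} \circ \bbM_\bbGamma)\bigr)_{\bbSigma'_\bbGamma}$ defines an injective linear map
\[
 \Psi_\bbGamma \colon U \hookrightarrow \prod_{\bbSigma'_\bbGamma \in \Obj \calA'(\bbGamma)} \Hom_\Bbbk\bigl(\rmV_\calC(\bbSigma'_\bbGamma \circ \bbSigma_\bbGamma), \rmV_\calC(\bbSigma'_\bbGamma \circ \bbSigma''_\bbGamma)\bigr),
\]
and likewise one obtains an injective $\Psi_{\bbGamma'}$ from $W$. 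Since tensor products of injective linear maps of vector spaces remain injective, $\Psi_\bbGamma \otimes \Psi_{\bbGamma'}$ is injective too.

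By the symmetric monoidality of $\rmV_\calC$, for any test pair $\bbSigma'_\bbGamma \in \calA'(\bbGamma)$, $\bbSigma'_{\bbGamma'} \in \calA'(\bbGamma')$ and any $\bbM_\bbGamma$, $\bbM_{\bbGamma'}$, the monoidal coherence isomorphism $\rmV_\calC(X) \otimes \rmV_\calC(Y) \cong \rmV_\calC(X \disjun Y)$ identifies $\rmV_\calC\bigl(\id_{\bbSigma'_\bbGamma \disjun \bbSigma'_{\bbGamma'}} \circ (\bbM_\bbGamma \disjun \bbM_{\bbGamma'})\bigr)$ with $\rmV_\calC(\id_{\bbSigma'_\bbGamma} \circ \bbM_\bbGamma) \otimes \rmV_\calC(\id_{\bbSigma'_{\bbGamma'}} \circ \bbM_{\bbGamma'})$. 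Consequently, restricting the target's detection functional $\Psi_{\bbGamma \disjun \bbGamma'}$ to product-form test objects $\bbSigma'_\bbGamma \disjun \bbSigma'_{\bbGamma'}$ and precomposing with $\bfmu_{\bbGamma, \bbGamma'}$ yields, through the canonical injection $\Hom \otimes \Hom \hookrightarrow \Hom(\otimes, \otimes)$, the composition $\kappa \circ (\Psi_\bbGamma \otimes \Psi_{\bbGamma'})$, where
\[
 \kappa \colon \Bigl(\prod_a X_a\Bigr) \otimes \Bigl(\prod_b Y_b\Bigr) \to \prod_{a, b}(X_a \otimes Y_b)
\]
is the canonical linear comparison map. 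Hence every element in the kernel of $\bfmu_{\bbGamma, \bbGamma'}$ lies in the kernel of $\kappa \circ (\Psi_\bbGamma \otimes \Psi_{\bbGamma'})$, and faithfulness reduces to injectivity of this last composition.

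The remaining input is injectivity of $\kappa$, which is standard linear algebra: writing a kernel element as $\sum_k \phi_k \otimes \psi_k$ with linearly independent $\phi_k$'s, the vanishing at every coordinate gives, for each $b$, that $\sum_k \phi_k \otimes \psi_k(b) = 0$ in $\bigl(\prod_a X_a\bigr) \otimes Y_b$ (using the injectivity of the natural map $(\prod_a X_a) \otimes Y_b \to \prod_a(X_a \otimes Y_b)$, proved by expanding in any basis of $Y_b$ extending a linearly independent subfamily of $\{\psi_k(b)\}$); evaluating against a linear functional on $\prod_a X_a$ dual to $\phi_j$ on the finite span of the $\phi_k$'s then forces $\psi_j(b) = 0$ for every $j, b$, so the element vanishes. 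I expect the conceptually subtle step of the proof to be verifying the factorization asserted in the middle paragraph, namely that the coherence data of $\rmV_\calC$ really does intertwine the horizontal composition in $\bfadCob_\calC$ with the definition of $\bfmu$ in precisely the stated way; once this bookkeeping is carefully unwound, the rest of the argument is routine.
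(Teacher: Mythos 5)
Your proof is correct and takes the same route as the paper's: both arguments restrict the vanishing criterion to product-form test objects $\bbSigma'_\bbGamma \disjun \bbSigma'_{\bbGamma'}$ and then invoke the symmetric monoidality of $\rmV_\calC$. The paper's version is quite terse and leaves implicit precisely the linear-algebraic steps you spell out (injectivity of the comparison map $\kappa$ into the product of tensor products, and of the canonical $\Hom \otimes \Hom \hookrightarrow \Hom(\otimes,\otimes)$), so your argument is essentially a more careful expansion of the same proof.
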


\begin{proof}
 The proof is the same as the one for \cite[Proposition~3.3]{D17}. Indeed, let us consider a trivial morphism of the form
 \[
  \sum_{i=1}^m \alpha_i \cdot \left[ \bbM_{\bbGamma,i} \disjun \bbM_{\bbGamma',i} \right] \in \bfA_\calC(\bbGamma \disjun \bbGamma')(\bbSigma_{\bbGamma} \disjun \bbSigma_{\bbGamma'},\bbSigma''_{\bbGamma} \disjun \bbSigma''_{\bbGamma'}).
 \]
 This means, by definition, that the linear map 
 \[
  \sum_{i=1}^m \alpha_i \cdot \rmV_\calC \left( \id_{\bbSigma'_{\bbGamma \disjun \bbGamma'}} \circ (\bbM_{\bbGamma,i} \disjun \bbM_{\bbGamma',i}) \right)
 \]
 is zero for every object $\bbSigma'_{\bbGamma \disjun \bbGamma'} \in \bfA'_\calC(\bbGamma \disjun \bbGamma')$. In particular, this holds for every object of the form $\bbSigma'_{\bbGamma} \disjun \bbSigma'_{\bbGamma'} \in \bfA'_\calC(\bbGamma \disjun \bbGamma')$ too. Therefore, the morphism
 \[
  \sum_{i=1}^m \alpha_i \cdot \left[ \bbM_{\bbGamma,i} \right] \sqtimes \left[ \bbM_{\bbGamma',i} \right] \in \bfA_\calC(\bbGamma)(\bbSigma_{\bbGamma},\bbSigma''_{\bbGamma}) \sqtimes \bfA_\calC(\bbGamma')(\bbSigma_{\bbGamma'},\bbSigma''_{\bbGamma'})
 \]
 is also trivial.
\end{proof}

\section{Connection, domination, and triviality}\label{S:combinatorial_topological_properties}

In this section, which follows the footsteps of \cite[Section~4.3]{D17}, we discuss some properties of the $2$-func\-tor $\bfA_\calC : \bfadCob_\calC \rightarrow \bfCat_\Bbbk$ related to the behavior of the invariant $\rmL'_\calC$. For every object $\bbGamma$ of $\bfadCob_\calC$, these results can be roughly summed up as follows: 
\begin{enumerate}
 \item The vector space of morphisms between any two objects of $\bfA_\calC(\bbGamma)$ is generated by admissible bichrome graphs inside a fixed non-empty connected $3$-di\-men\-sion\-al cobordism with corners, as proved in Lemma~\ref{L:connection_lemma};
 \item The linear category $\bfA_\calC(\bbGamma)$ is dominated by admissible sets of marked points inside a fixed non-empty $2$-di\-men\-sion\-al cobordism, as proved in Lemma~\ref{L:Morita_reduction}.
 \item To check if a morphism of $\bfA_\calC(\bbGamma)$ is trivial, it is sufficient to test it against bichrome graphs inside a fixed triple of connected $3$-di\-men\-sion\-al cobordisms with corners, as proved in Lemma~\ref{L:triviality_lemma}.
\end{enumerate}
These properties will be used in Section~\ref{S:monoidality} in order to show the monoidality of $\hat{\bfA}_\calC$.

We start by introducing an equivalence relation between linear combinations of bi\-chrome graphs embedded inside $3$-di\-men\-sion\-al cobordisms with corners called skein equivalence. This notion is completely analogous to the one discussed in \cite[Section~4.5]{DGGPR19}, where more details can be found. We recall that, if $\calR_\intL$ denotes the category of bichrome graphs, we say two linear combinations of morphisms between the same pair of objects of $\calR_\intL$ are \textit{skein equivalent} if their image under the Lyubashenko--Reshetikhin--Turaev functor $F_\intL : \calR_\intL \to \calC$ coincides, see \cite[Section~4.2]{DGGPR19} for a more precise statement. Then, loosely speaking, we say two linear combinations of bichrome graphs embedded inside a fixed $3$-di\-men\-sion\-al cobordism with corners $M$ are \textit{skein equivalent} if they are related by a finite sequence of skein equivalences of $\calR_\intL$, each taking place inside a ball embedded into $M$. For a more careful explanation of this sketchy idea, see \cite[Section~4.5]{DGGPR19}. In general, if $T$ and $T'$ are linear combinations of bichrome graphs (possibly embedded inside a $3$-di\-men\-sion\-al cobordism with corners), the notation $T \doteq T'$ will stand for \textit{$T$ is skein equivalent to $T'$}.

When working with linear combinations of admissible bichrome graphs, the relevant equivalence relation will be a different one, called admissible skein equivalence. We say two linear combinations of bichrome graphs inside a fixed $3$-di\-men\-sion\-al cobordism with corners $M$ are \textit{admissibly skein equivalent} if they are related by a finite sequence of skein equivalences of $\calR_\intL$, each taking place inside a ball whose complement in $M$ contains an admissible bichrome graph. We should focus on this particular notion of skein equivalence every time we want to avoid the risk of inadvertently turning admissible bichrome graphs into non-admissible ones.

Next, we recall that \cite[Figure~6]{DGGPR19} defines an operation on admissible morphisms of $\calR_\intL$ which we refer to as the \textit{red-to-blue operation}. Roughly speaking, as suggested by the name, this operation allows us to replace red components of admissible bichrome graphs with blue ones. Since this will be crucial for our construction, it will be convenient to slightly broaden the meaning of the term \textit{skein equivalence}, both in its original form and in its admissible one. Therefore, we also declare two bichrome graphs inside a fixed $3$-di\-men\-sion\-al cobordism with corners $M$ to be \textit{skein equivalent} whenever they are related by a finite sequence of red-to-blue operations, each taking place inside a solid torus embedded into $M$. Remark that red-to-blue operations are admissible, meaning that they never turn admissible bichrome graphs into non-admissible ones.

Now, if $\bbSigma = (\Sigma,P,\lambda)$ and $\bbSigma' = (\Sigma',P',\lambda')$ are $1$-mor\-phisms of $\bfadCob_\calC$ between objects $\bbGamma$ and $\bbGamma'$, and $M$ is a $3$-di\-men\-sion\-al cobordism with corners from $\Sigma$ to $\Sigma'$, we denote with $\adSk(M;P,P')$ the \textit{admissible skein module of $M$ relative to $P$ and $P'$}, which is by definition the quotient of the vector space generated by the set of all admissible bichrome graphs in $M$ from $P$ to $P'$ modulo admissible skein equivalences.

\begin{lemma}\label{L:connection_lemma}
 If $\bbSigma_{\bbGamma} = (\Sigma_\Gamma,P,\lambda)$ and $\bbSigma''_{\bbGamma} = (\Sigma''_\Gamma,P'',\lambda'')$ are ob\-jects of $\bfA_\calC(\bbGamma)$, and if $M$ is a non-emp\-ty connected $3$-di\-men\-sion\-al cobordism with corners from $\Sigma_\Gamma$ to $\Sigma'_\Gamma$, then the linear map 
 \[
  \begin{array}{rccc}
   \pi_M : & \adSk(M;P,P'') & 
   \to & \bfA_\calC(\bbGamma)(\bbSigma_{\bbGamma},\bbSigma''_{\bbGamma}) \\
   & T & \mapsto & [M,T,0]
  \end{array}
 \]
 is surjective.
\end{lemma}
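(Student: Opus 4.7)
The plan is to show that any representative $[M', T', n']$ of a morphism in $\bfA_\calC(\bbGamma)(\bbSigma_\bbGamma, \bbSigma''_\bbGamma)$ can be rewritten in the form $[M, T, 0]$ with $T \in \adSk(M;P,P'')$. By bilinearity, it suffices to treat a single triple. I would proceed in two stages: first absorb the signature defect $n'$ into the bichrome graph, keeping $M'$ fixed, and then exchange $M'$ for the prescribed cobordism $M$ by a surgery argument.

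For the first stage, I would use the stabilization identities $\Delta_\pm = \calD \delta^{\pm 1}$. Choose a small $3$-ball in $M'$ disjoint from $T'$, insert $|n'|$ disjoint copies of a red $\pm 1$-framed unknot there with sign matching that of $n'$, and rescale the result by $\calD^{-|n'|}$. Because each unknot is confined to a ball disjoint from $T'$, any closure of the resulting bichrome configuration multiplies $F'_\intL$ by $\Delta_\pm^{|n'|}$ relative to the closure of $(M', T', 0)$; together with the rescaling, this multiplies $\rmL'_\calC$ by $(\Delta_\pm/\calD)^{|n'|} = \delta^{n'}$. By the extension formula $\rmL'_\calC(\bbM) = \delta^n \rmL'_\calC(M,T)$ for closed $2$-morphisms, this matches the $\rmL'_\calC$ value of the corresponding closure of $(M', T', n')$. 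Therefore the admissible skein element
\[
 T^* := \calD^{-|n'|}\bigl(T' \sqcup O_{\pm}^{|n'|}\bigr) \in \adSk(M';P,P'')
\]
satisfies $[M', T^*, 0] = [M', T', n']$ in $\bfA_\calC(\bbGamma)$.

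For the second stage, I would invoke the classical result that any two non-empty connected $3$-dimensional cobordisms with corners sharing the same horizontal boundary are related by integer surgery on a framed link in the interior, possibly after $1$-handle stabilizations (which themselves admit bichrome-graph encodings). Fix such a framed link $L' \subset \mathrm{Int}(M)$ together with a diffeomorphism from the surgery of $M$ along $L'$ to $M'$; after isotoping $T^*$ off the cores of the surgery, view it as an admissible bichrome graph $T^{**} \subset M$ disjoint from a tubular neighborhood of $L'$. Declaring $L'$ red and rescaling by $\calD^{-|L'|}$ yields
\[
 T := \calD^{-|L'|}\bigl(T^{**} \cup L'\bigr) \in \adSk(M;P,P'').
\]
A direct computation based on $\rmL'_\calC = \calD^{-1-\ell}\delta^{-\sigma}F'_\intL$, together with the observation that appending $L'$ to any surgery presentation $L$ of a closure of $M$ yields a surgery presentation $L \cup L'$ of the corresponding closure of $M'$, shows that $[M, T, n''] = [M', T^*, 0]$ in $\bfA_\calC(\bbGamma)$ for an integer $n''$ depending only on $L' \subset M$. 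A second application of the first stage then absorbs $n''$ into the skein element.

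The principal obstacle lies in verifying that the signature correction $n''$ of the second stage is a well-defined integer independent of the choice of cap used to form the closure. This reduces to the fact that the shift $\sigma(L \cup L') - \sigma(L)$ of the linking-matrix signature depends only on $L'$ as a framed link in $M$, an invariance that follows from the behaviour of signatures under the Kirby moves relating different surgery presentations of the same closed manifold. Admissibility is preserved throughout, since each modification either inserts a red component or multiplies by a nonzero scalar, neither of which alters the projective labels on blue edges.
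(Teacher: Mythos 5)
Your overall approach -- reduce to a surgery presentation of $M_\Gamma$ relative to the prescribed cobordism $M$, declare the surgery link red, and compensate the scalar discrepancy -- is the same one the paper uses, which cites Proposition~4.10 of \cite{DGGPR19} to do the bookkeeping in one step rather than in your two-stage organization. However, there are two genuine gaps.

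First, your surgery argument requires both $M$ and the arbitrary representative $M'$ to be connected, but a morphism of $\bfA_\calC(\bbGamma)(\bbSigma_{\bbGamma},\bbSigma''_{\bbGamma})$ can be represented by a disconnected $M'$, and Dehn surgery on a framed link in a connected $3$-manifold never disconnects it. You wave at this with ``possibly after $1$-handle stabilizations (which themselves admit bichrome-graph encodings),'' but this is precisely where the substance lies. Connecting components by index-$1$ surgeries changes the invariant by factors that must be absorbed, and in order to absorb them one needs to first insert, into every component, blue coupons coloured by the projective-cover and injective-envelope morphisms $\epsilon_{\one}$ and $\eta_{\one}$ -- the ``projective trick'' of \cite[Figure~8]{DGGPR19} -- after which \cite[Proposition~4.10]{DGGPR19} guarantees that the connected result is a non-zero scalar multiple of the original. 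Without this, ``bichrome-graph encodings'' of $1$-handle stabilizations is not a proof.

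Second, your treatment of the signature correction $n''$ in the second stage is not quite right even as a sketch. The signature of the linking matrix of $L \cup L'$ is not $\sigma(L)$ plus a term depending only on $L'$: the off-diagonal block of cross-linking numbers between $L$ and $L'$ contributes. The correct statement is a consequence of Wall's non-additivity and the Maslov-index correction built into vertical composition in $\bfCob_\calC$, and in the present setting it is again packaged in \cite[Proposition~4.10]{DGGPR19}, which shows directly that $[M_\Gamma,T,n] = \calD^{-\ell}\,[M, L\cup T, n_L] = \calD^{-\ell}\delta^{n_L}\,[M, L\cup T, 0]$ for a single surgery link $L \subset M$. Relying on this proposition both renders your stage one superfluous and closes the gap you flag; appealing to ``the behaviour of signatures under Kirby moves'' as you do does not.
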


\begin{proof}
 The proof is completely analogous to that of \cite[Proposition~4.11]{DGGPR19}. First, the fact that this map is well-defined follows from the definition of the invariant $\rmL'_\calC$ in terms of the Lyubashenko--Reshetikhin--Turaev functor $F_\intL$. Next, in order to prove that $\pi_M$ is surjective, we have to show that for every mor\-phism $[M_\Gamma,T,n]$ of $\bfA_\calC(\bbGamma)(\bbSigma_{\bbGamma},\bbSigma''_{\bbGamma})$ there exist ad\-mis\-si\-ble bichrome graphs $T_1,\ldots,T_m$ in $M$ from $P$ to $P''$, and scalars $\alpha_1, \ldots, \alpha_m \in \Bbbk$ such that
 \[
  \sum_{i = 1}^m \alpha_i \cdot [M,T_i,0] = [M_{\Gamma},T,n].
 \]
 First of all, we can always suppose that $M_\Gamma$ is connected. Indeed, if it is not, then, by applying the \textit{projective trick} of \cite[Figure~8]{DGGPR19}, we can suppose that every connected component of $M_\Gamma$ contains blue coupons colored with either the projective cover morphism $\epsilon_{\one} : P_{\one} \to \one$ or the injective envelope morphism $\eta_{\one} : \one \to P_{\one}$.  Thus, thanks to \cite[Proposition~4.10]{DGGPR19}, a finite sequence of index $1$ surgeries connecting the components of $(M_\Gamma,T,n)$ determines a connected vector of $\bfA_\calC(\bbGamma)(\bbSigma_{\bbGamma},\bbSigma''_{\bbGamma})$ which is a non-zero scalar multiple of $[M_\Gamma,T,n]$. Then, since we are now assuming that $M_\Gamma$ is connected, we know that there exists a surgery presentation of $M_\Gamma$ given by a red oriented framed link $L \subset M$. This means that, thanks again to \cite[Proposition~4.10]{DGGPR19}, if $\ell$ denotes the number of components of $L$, there exists an integer $n_L \in \Z$ such that 
 \[
  [M_\Gamma,T,n] = \calD^{-\ell} \cdot [M,L \cup T,n_L] = \calD^{-\ell} \delta^{n_L} \cdot [M,L \cup T,0]
 \]
 for the non-zero coefficients $\calD, \delta \in \Bbbk$ fixed in Section~\ref{S:3-manifold_invariant}.
\end{proof}

We point out that a direct translation of Lemma~\ref{L:connection_lemma} for the linear category $\bfA'_\calC(\bbGamma)$ is impossible, in general. Indeed, using the terminology introduced in Section~\ref{S:admissible_cobordisms}, admissibility is equivalent to strong admissibility for all objects and morphisms of $\bfA_\calC(\bbGamma)$, while this is no longer true for objects and morphisms of $\bfA'_\calC(\bbGamma)$. For this reason, the analogue of Lemma~\ref{L:connection_lemma} for the linear category $\bfA'_\calC(\bbGamma)$ requires stronger hypotheses.

\begin{lemma}\label{L:connection_lemma_prime}
 If $\bbSigma'_{\bbGamma} = (\Sigma'_\Gamma,P',\lambda')$ and $\bbSigma'''_{\bbGamma} = (\Sigma'''_\Gamma,P''',\lambda''')$ are strong\-ly ad\-mis\-si\-ble ob\-jects of $\bfA'_\calC(\bbGamma)$, and if $M'$ is a non-emp\-ty connected $3$-di\-men\-sion\-al cobordism with corners from $\Sigma'_\Gamma$ to $\Sigma'''_\Gamma$, then the linear map 
 \[
  \begin{array}{rccc}
   \pi_{M'} : & \adSk(M';P',P''') & 
   \to & \bfA'_\calC(\bbGamma)(\bbSigma'_{\bbGamma},\bbSigma'''_{\bbGamma}) \\
   & T' & \mapsto & [M',T',0]
  \end{array}
 \]
 is surjective.
\end{lemma}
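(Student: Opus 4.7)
The plan is to mirror the proof of Lemma~\ref{L:connection_lemma} essentially verbatim, with care taken at the points where the asymmetric admissibility condition of $\bfA'_\calC(\bbGamma)$ differs from that of $\bfA_\calC(\bbGamma)$.

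Well-definedness of $\pi_{M'}$ is handled exactly as before, by invoking the fact that admissible skein equivalences preserve the image under the Lyubashenko--Reshetikhin--Turaev functor $F_\intL$, and hence the class in $\bfA'_\calC(\bbGamma)$ via the universal construction. For surjectivity, given $[M'_\Gamma, T', n'] \in \bfA'_\calC(\bbGamma)(\bbSigma'_\bbGamma, \bbSigma'''_\bbGamma)$, I would first reduce to the case where $M'_\Gamma$ is connected by combining the projective trick of \cite[Figure~8]{DGGPR19} with \cite[Proposition~4.10]{DGGPR19}. The projective trick inserts blue coupons colored by $\epsilon_\one$ or $\eta_\one$ in every connected component of $M'_\Gamma$ to make each of them contain an admissible bichrome subgraph, and then index~$1$ surgery connects components two at a time while multiplying the class by a non-zero scalar. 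This is the step where the strong admissibility hypothesis on $\bbSigma'_\bbGamma$ and $\bbSigma'''_\bbGamma$ plays its role: it guarantees that all boundary components of $\Sigma'_\Gamma$ and $\Sigma'''_\Gamma$ carry projective labels, so that every component of $M'_\Gamma$ can be made admissible by the trick without having to appeal to boundary data that might be missing in the weaker admissibility regime of $\bfA'_\calC(\bbGamma)$.

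Once $M'_\Gamma$ is connected, the final step is to choose a red surgery presentation $L \subset M'$ with $\ell$ components realizing $M'_\Gamma$ as the result of surgery on $L$ inside $M'$, and apply \cite[Proposition~4.10]{DGGPR19} a final time to obtain $[M'_\Gamma, T', n'] = \calD^{-\ell} \delta^{n_L} \cdot [M', L \cup T', 0]$ for an appropriate integer $n_L$. The right-hand side lies in the image of $\pi_{M'}$, establishing surjectivity. I expect the main obstacle to be the rigorous bookkeeping of admissibility throughout these topological modifications, since the asymmetry of admissibility for $\bfA'_\calC(\bbGamma)$ permits components of cobordisms to carry no projective labels whenever they meet the incoming boundary; strong admissibility of the $1$-morphism endpoints is exactly the extra input that neutralizes this complication.
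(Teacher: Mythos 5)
Your proposal is correct and coincides with the paper's approach: the paper itself simply declares that the proof of Lemma~\ref{L:connection_lemma} carries over verbatim, and you reproduce exactly that argument (well-definedness from $F_\intL$, reduction to the connected case via the projective trick and index~$1$ surgeries from \cite[Proposition~4.10]{DGGPR19}, then a surgery presentation inside $M'$). Your added explanation of why strong admissibility of $\bbSigma'_{\bbGamma}$ and $\bbSigma'''_{\bbGamma}$ is needed — namely, it forces every connected component of $M'_\Gamma$, including those meeting the incoming horizontal boundary, to contain a projective blue edge so the projective trick and surgery step apply — is a correct and useful gloss that the paper leaves implicit.
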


The exact same proof of Lemma~\ref{L:connection_lemma} can be used to establish  Lemma~\ref{L:connection_lemma_prime}. Now, if $\Sigma$ is a $2$-dimensional cobordism, we denote with $\adD(\Sigma)$ the set of all sets of marked points in $\Sigma$ whose restriction to every connected component of $\Sigma$ is admissible. Then, the next result is stated in terms of the notion of dominating set, as recalled in Section~\ref{S:complete_lin_cat}.

\begin{lemma}\label{L:Morita_reduction}
 If $\bbGamma = \Gamma$ is an object of $\bfadCob_\calC$, if $\Sigma$ is a non-empty $2$-di\-men\-sion\-al cobordism from $\varnothing$ to $\Gamma$, and if $\lambda \subset H_1(\Sigma;\R)$ is a Lagrangian subspace, then the set
 \[
  \left\{ (\Sigma,P,\lambda) \in \bfA_\calC(\bbGamma) \bigm| P \in \adD(\Sigma) \right\}
 \]
 dominates $\bfA_\calC(\bbGamma)$.
\end{lemma}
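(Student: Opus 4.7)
The strategy is to show that, for any object $\bbSigma_{\bbGamma} = (\Sigma_\Gamma, P, \lambda_\Gamma)$ of $\bfA_\calC(\bbGamma)$, there exists a finite family of objects $\{(\Sigma, P_i, \lambda)\}_{i=1}^{m}$ with $P_i \in \adD(\Sigma)$, together with morphisms $f_i \in \bfA_\calC(\bbGamma)(\bbSigma_{\bbGamma},(\Sigma, P_i, \lambda))$ and $f'_i \in \bfA_\calC(\bbGamma)((\Sigma, P_i, \lambda),\bbSigma_{\bbGamma})$ such that $\id_{\bbSigma_{\bbGamma}} = \sum_{i=1}^{m} f'_i \circ f_i$. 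Since $\bfA_\calC(\bbGamma)$ is linear, scalar factors can be absorbed into the $f'_i$.

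Begin by choosing a connected non-empty $3$-dimensional cobordism with corners $W : \Sigma_\Gamma \to \Sigma$, which is possible because both surfaces share outgoing boundary $\Gamma$ and empty incoming boundary, so some cobordism always exists (take, e.g., a handlebody bounded by the double $\Sigma_\Gamma \cup_\Gamma \overline{\Sigma}$) and can be made connected by 1-handle stabilizations. Let $\overline{W} : \Sigma \to \Sigma_\Gamma$ denote the orientation reverse, and let $M := W \cup_\Sigma \overline{W}$ be the resulting connected non-empty $3$-dimensional cobordism with corners from $\Sigma_\Gamma$ to itself. By Lemma~\ref{L:connection_lemma}, the linear map $\pi_M : \adSk(M;P,P) \to \bfA_\calC(\bbGamma)(\bbSigma_{\bbGamma},\bbSigma_{\bbGamma})$ is surjective, so there exist admissible bichrome graphs $T_1, \ldots, T_m \subset M$ from $P$ to $P$ and scalars $\alpha_1, \ldots, \alpha_m \in \Bbbk$ with
\[
 \id_{\bbSigma_{\bbGamma}} = \sum_{i=1}^{m} \alpha_i \cdot [M, T_i, 0].
\]

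For each $i$, cut $T_i$ along $\Sigma \subset M$. By general position, isotope $T_i$ so that it meets $\Sigma$ transversally. By red-to-blue operations in a tubular neighborhood of $\Sigma$, which are admissible skein equivalences and therefore preserve $[M, T_i, 0]$, arrange that all intersections of $T_i$ with $\Sigma$ lie on blue edges. If some connected component of $\Sigma$ is not yet crossed by $T_i$ along a projective label, invoke the projective trick of \cite[Figure~8]{DGGPR19} to insert a $P_{\one}$-labeled blue strand traversing that component, without altering $[M, T_i, 0]$. Then $P_i := T_i \cap \Sigma \in \adD(\Sigma)$, and $T_i$ splits along $\Sigma$ as $T_i = T_{W,i} \cup_{P_i} T_{\overline{W},i}$ with $T_{W,i} \subset W$ and $T_{\overline{W},i} \subset \overline{W}$. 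Setting
\begin{align*}
 f_i &:= [W, T_{W,i}, 0],
 &
 f'_i &:= \alpha_i \cdot [\overline{W}, T_{\overline{W},i}, \mu_i],
\end{align*}
where $\mu_i$ is the Maslov correction arising in vertical composition, the composition rule gives $f'_i \circ f_i = \alpha_i \cdot [M, T_i, 0]$, and summing over $i$ recovers $\id_{\bbSigma_{\bbGamma}}$.

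The main obstacle is the admissibility bookkeeping. The connection lemma delivers admissible bichrome graphs inside the ambient $M$, but once split at $\Sigma$, the half-graphs $T_{W,i}$ and $T_{\overline{W},i}$ must independently represent admissible $2$-morphisms of $\bfadCob_\calC$, and the intermediate sets $P_i$ must be admissible on every connected component of $\Sigma$. The projective trick addresses both issues at once, but applying it component by component on $\Sigma$ while keeping each class $[M, T_i, 0]$ unchanged — and ensuring the pieces obtained by cutting at $\Sigma$ remain admissible as $2$-morphisms — is the delicate technical point.
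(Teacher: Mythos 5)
Your argument follows essentially the same route as the paper's proof: apply Lemma~\ref{L:connection_lemma} to a connected cobordism factoring through $\Sigma$, then cut the resulting admissible bichrome graphs along $\Sigma$ after isotoping, performing red-to-blue operations, and sliding a projective blue strand through every component of $\Sigma$ so that the intersection lies in $\adD(\Sigma)$; the paper phrases this for an arbitrary morphism between two objects rather than just $\id_{\bbSigma_\bbGamma}$, but that is cosmetic. The ``delicate technical point'' you flag at the end is in fact resolved by what you have already written: once $P_i \in \adD(\Sigma)$, every component of $\Sigma$ carries a projective marked point, so both halves $T_{W,i}$ and $T_{\overline{W},i}$ inherit a projective blue edge and the split $2$-morphisms are automatically admissible, while any non-zero scalar introduced by the projective trick is simply absorbed into the coefficients $\alpha_i$.
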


\begin{proof}
 The proof is adapted from the one of \cite[Lemma~4.5]{D17}. If $\bbSigma_{\bbGamma} = (\Sigma_\Gamma,P,\lambda)$ and $\bbSigma''_{\bbGamma} = (\Sigma''_\Gamma,P'',\lambda'')$ are objects of $\bfA_\calC(\bbGamma)$, if $M$ is a non-empty connected $3$-di\-men\-sion\-al cobordism with corners from $\Sigma_\Gamma$ to $\Sigma$, and if $M'$ is a non-empty connected $3$-di\-men\-sion\-al cobordism with corners from $\Sigma$ to $\Sigma''_\Gamma$, then, thanks to Lemma~\ref{L:connection_lemma}, every morphism $[\bbM_{\bbGamma}]$ of $\bfA_\calC(\bbGamma)(\bbSigma_{\bbGamma},\bbSigma''_{\bbGamma})$ is the image of some vector of $\adSk(M \cup_\Sigma M';P,P'')$. Up to isotopy, every bichrome graph in $M \cup_\Sigma M'$ determines a set $P$ of marked points in $\Sigma$ providing an element of $\adD(\Sigma)$. This means we have
 \[
  \left[ \bbM_{\bbGamma} \right] = \sum_{i=1}^m \alpha_i \cdot \left[ (M',T'_i,0) \ast (M,T_i,0) \right],
 \]
 where $T_i \in \adSk(M;P,P_i)$ and $T'_i \in \adSk(M';P_i,P'')$ for every integer $1 \leqslant i \leqslant m$ and for some $P_1,\ldots,P_m \in \adD(\Sigma)$. 
\end{proof}

Notice that Lemma~\ref{L:Morita_reduction} does not admit an analogue for $\bfA'_\calC(\bbGamma)$, the reason being again that morphisms of $\bfA'_\calC(\bbGamma)$ are not strongly admissible in general.

\begin{lemma}\label{L:triviality_lemma}
 If $\bbSigma_{\bbGamma} = (\Sigma_\Gamma,P,\lambda)$ and $\bbSigma''_{\bbGamma} = (\Sigma''_\Gamma,P'',\lambda'')$ are objects of $\bfA_\calC(\bbGamma)$ for some object $\bbGamma = \Gamma$ of $\bfadCob_\calC$, if $\Sigma'$ is a non-empty $2$-di\-men\-sion\-al cobordism from $\Gamma$ to $\varnothing$, if $\lambda' \subset H_1(\Sigma';\R)$ is a Lagrangian subspace, and if $M$ and $M'$ are connected $3$-di\-men\-sion\-al cobordisms from $\varnothing$ to $\Sigma_\Gamma \cup_\Gamma \Sigma'$ and from $\Sigma''_\Gamma \cup_\Gamma \Sigma'$ to $\varnothing$ respectively, then a linear combination of morphisms
 \[
  \sum_{i=1}^m \alpha_i \cdot [ \bbM_{\bbGamma,i} ] \in \bfA_\calC(\bbGamma)(\bbSigma_{\bbGamma},\bbSigma''_{\bbGamma})
 \]
 is trivial if and only if
 \[
  \sum_{i=1}^m \alpha_i \rmL'_\calC \left( (M',T',0) \ast \left( \id_{(\Sigma',P',\lambda')}  \circ \bbM_{\bbGamma,i} \right) \ast (M,T,0) \right) = 0
 \]
 for all $P' \in \adD(\Sigma')$, $T \in \adSk(M;\varnothing,P' \cup P)$, and $T' \in \adSk(M';P' \circ P'',\varnothing)$.
\end{lemma}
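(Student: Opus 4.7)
The plan is to prove the two directions separately, with the reverse direction being the substantive one.

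The forward (``only if'') direction is a direct unwinding of the definition of the annihilator $\calA'(\bbGamma)^\perp$. Indeed, if $\sum_i \alpha_i [\bbM_{\bbGamma,i}] = 0$ in $\bfA_\calC(\bbGamma)(\bbSigma_\bbGamma, \bbSigma''_\bbGamma)$, then $\sum_i \alpha_i \rmV_\calC(\id_{\bbSigma'_\bbGamma} \circ \bbM_{\bbGamma,i})$ is the zero linear operator for every $\bbSigma'_\bbGamma \in \calA'(\bbGamma)$. Specializing to $\bbSigma'_\bbGamma = (\Sigma',P',\lambda')$ and unpacking this vanishing through the $1$-categorical universal construction defining $\rmV_\calC$ (vectors represented by morphisms $\varnothing \to (\Sigma',P',\lambda') \circ \bbSigma_\bbGamma$ in $\rmadCob_\calC$, functionals by morphisms $(\Sigma',P',\lambda') \circ \bbSigma''_\bbGamma \to \varnothing$, paired through $\rmL'_\calC$ of the resulting closed $3$-manifold) produces exactly the stated identity once the vector is specialized to $[(M,T,0)]$ and the functional to $[(M',T',0)]$.

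For the reverse (``if'') direction, my strategy is to reduce an arbitrary pairing test to a specific one. By the same characterization, it is enough to show that for any $\tilde{\bbSigma}'_\bbGamma = (\tilde{\Sigma}',\tilde{P}',\tilde{\lambda}') \in \calA'(\bbGamma)$ and any $\tilde{\bbN} : \varnothing \to \tilde{\bbSigma}'_\bbGamma \circ \bbSigma_\bbGamma$, $\tilde{\bbN}' : \tilde{\bbSigma}'_\bbGamma \circ \bbSigma''_\bbGamma \to \varnothing$ in $\rmadCob_\calC$, the sum $\sum_i \alpha_i \rmL'_\calC(\tilde{\bbN}' \circ (\id_{\tilde{\bbSigma}'_\bbGamma} \circ \bbM_{\bbGamma,i}) \circ \tilde{\bbN})$ vanishes. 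Each resulting closed $3$-manifold $\tilde{W}_i$ is obtained by attaching $\bbM_{\bbGamma,i}$, along its boundary $\Sigma_\Gamma \cup \Sigma''_\Gamma \cup (\Gamma \times I)$, to an $i$-independent ``complement'' built from $\tilde{\bbN}$, $\tilde{\Sigma}' \times I$ and $\tilde{\bbN}'$. I would transform this complement into one of the specific form $(M',T',0) \cup (\Sigma' \times I, P' \times I, 0) \cup (M,T,0)$, up to a non-zero $i$-independent scalar $c$, by: (i) choosing a $3$-dimensional bridge cobordism $Y$ with corners from $\tilde{\Sigma}'$ to $\Sigma'$ in $\bfadCob_\calC$, which exists because both are $2$-cobordisms from $\Gamma$ to $\varnothing$, and using it to swap the middle cylinder $\tilde{\Sigma}' \times I$ for $\Sigma' \times I$ inside the complement; and (ii) invoking the $1$-categorical analogue of Lemma~\ref{L:connection_lemma}, namely \cite[Propositions~4.10--4.11]{DGGPR19}, to represent the reshaped copies of $\tilde{\bbN}$ and $\tilde{\bbN}'$ as the fixed connected $M$ and $M'$ decorated with admissible bichrome graphs $T$ and $T'$.

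The main obstacle is controlling the extra data carried by the arbitrary context. The marked points $\tilde{P}'$ must be transported through $Y$ and absorbed into the varying $P' \in \adD(\Sigma')$ as endpoints on $\Sigma'$ of bichrome strands that now live inside the reshaped complement, and hence, after the second reduction step, inside $M$ or $M'$ as part of $T$ and $T'$. The Lagrangian discrepancy between $\tilde{\lambda}'$ and $\lambda'$, together with the signature defects of $\tilde{\bbN}$ and $\tilde{\bbN}'$, contributes a scalar $\delta^n$ via the stabilization coefficients recalled in Section~\ref{S:3-manifold_invariant}; the topological change from replacing $\tilde{\bbN}, \tilde{\bbN}'$ by $M, M'$ is recorded by a red surgery link absorbed into $T$ and $T'$, contributing a further non-zero factor of the form $\calD^{-\ell} \delta^{-\sigma}$ exactly as in the proof of Lemma~\ref{L:connection_lemma}. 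Admissibility of the bichrome graphs obtained along the way is preserved by the projective trick of \cite[Figure~8]{DGGPR19}. Once these reductions yield $\rmL'_\calC(\tilde{W}_i) = c \cdot \rmL'_\calC\bigl((M',T',0) \ast (\id_{(\Sigma',P',\lambda')} \circ \bbM_{\bbGamma,i}) \ast (M,T,0)\bigr)$ with $c$ and $P',T,T'$ depending on $(\tilde{\bbSigma}'_\bbGamma, \tilde{\bbN}, \tilde{\bbN}')$ but not on $i$, linearity in $\alpha_i$ combined with the standing hypothesis gives $\sum_i \alpha_i \rmL'_\calC(\tilde{W}_i) = c \cdot 0 = 0$, as required.
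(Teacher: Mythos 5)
Your overall strategy for the reverse direction—transport the arbitrary ``waist'' surface to the fixed $\Sigma'$ and then invoke the connection lemma to land in the prescribed $M,M'$—is in the same spirit as the paper's, but the execution has a concrete gap. You claim the reduction yields a \emph{single} relation
\[
\rmL'_\calC(\tilde{W}_i) \;=\; c\cdot\rmL'_\calC\!\left((M',T',0)\ast\bigl(\id_{(\Sigma',P',\lambda')}\circ\bbM_{\bbGamma,i}\bigr)\ast(M,T,0)\right)
\]
for one scalar $c$ and one choice of $P',T,T'$ that is independent of $i$. This cannot hold in general. Once the bichrome graph is isotoped to meet the waist $\Sigma'$ transversely and red components are removed there via red-to-blue operations, it determines \emph{some} set of marked points in $\adD(\Sigma')$; the connection lemmas only provide a surjection from an admissible skein module, so different graphs produce different decorations of $\Sigma'$, and the net effect is a \emph{linear combination} over several choices $P'_j\in\adD(\Sigma')$ with corresponding $T_j,T'_j$ and coefficients $c_j$. (This is precisely the sum over $i'$ in the paper's proof.) The conclusion $\sum_i\alpha_i\rmL'_\calC(\tilde{W}_i)=0$ still follows by linearity, but the intermediate single-term identity you assert is false as stated, and it is exactly the place where the real content of the lemma lives.

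Relatedly, your step (i), ``swap the middle cylinder $\tilde{\Sigma}'\times I$ for $\Sigma'\times I$ inside the complement,'' is not a well-defined operation at the level of closed $3$-manifolds: it changes the manifold, and $\rmL'_\calC$ is not generically preserved up to an $i$-independent scalar under such a replacement. The paper sidesteps this by never modifying the closed manifold directly; instead it works inside the quotient category. Concretely, it first makes $\id_{\tilde{\bbSigma}'_\bbGamma}$ strongly admissible—by isotoping projective blue edges of the vector $\bbM_{\tilde{\bbSigma}'_\bbGamma\circ\bbSigma_\bbGamma}$ into every connected component of the cylinder, a step your proposal does not address, and without which Lemma~\ref{L:connection_lemma_prime} is simply not applicable—then expresses $[\id_{\tilde{\bbSigma}'_\bbGamma}]$, \emph{as an equality of morphisms in $\bfA'_\calC(\bbGamma)$}, as a linear combination of vertical composites $(M''',T'''_{i'},0)\ast(M'',T''_{i'},0)$ pinched at $\Sigma'$, and finally applies Lemma~\ref{L:connection_lemma} separately to the resulting top and bottom halves. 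Framing the reduction as an equality in the quotient, rather than as an equality of invariants of specific closed manifolds, is what makes the argument rigorous; your proposal needs to be recast in that form to close the gap.
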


\begin{proof}
 The proof follows closely the one for \cite[Lemma~4.6]{D17}. The morphism
 \[
  \sum_{i=1}^m \alpha_i \cdot [ \bbM_{\bbGamma,i} ] \in \bfA_\calC(\bbGamma)(\bbSigma_{\bbGamma},\bbSigma''_{\bbGamma})
 \]
 is zero, by definition, if and only if the linear map
 \[
  \sum_{i=1}^m \alpha_i \cdot \rmV_\calC \left( \id_{\bbSigma'_{\bbGamma}} \circ \bbM_{\bbGamma,i} \right)
 \]
 is zero for every object $\bbSigma'_{\bbGamma} \in \bfA'_\calC(\bbGamma)$. This happens if and only if the invariant
 \[
  \sum_{i=1}^m \alpha_i \rmL'_\calC \left( \bbM'_{\bbSigma'_{\bbGamma} \circ \bbSigma''_{\bbGamma}} \ast \left( \id_{\bbSigma'_{\bbGamma}} \circ \bbM_{\bbGamma,i} \right) \ast \bbM_{\bbSigma'_{\bbGamma} \circ \bbSigma_{\bbGamma}} \right)
 \]
 is zero for all vectors $[\bbM_{\bbSigma'_{\bbGamma} \circ \bbSigma_{\bbGamma}}] \in \rmV_\calC(\bbSigma'_{\bbGamma} \circ \bbSigma_{\bbGamma})$ and $[\bbM'_{\bbSigma'_{\bbGamma} \circ \bbSigma''_{\bbGamma}}] \in \rmV'_\calC(\bbSigma'_{\bbGamma} \circ \bbSigma''_{\bbGamma})$. The bichrome graph appearing in $\bbM_{\bbSigma'_{\bbGamma} \circ \bbSigma_{\bbGamma}}$ contains a blue edge labeled by a projective object of $\calC$ in every connected component of the underlying $3$-di\-men\-sion\-al cobordism. Therefore, up to isotoping these blue edges through all connected components of the $3$-di\-men\-sion\-al cobordism underlying $\id_{\bbSigma'_{\bbGamma}}$, we can suppose $\id_{\bbSigma'_{\bbGamma}}$ is strongly admissible. Then, thanks to Lemma~\ref{L:connection_lemma_prime}, we know that $[\id_{\bbSigma'_{\bbGamma}}]$ is the image of a vector of $\adSk(M'' \cup_{\Sigma'} M''';P',P')$ for some pair of non-empty connected $3$-di\-men\-sion\-al cobordisms with corners $M''$ from $\Sigma'_\Gamma$ to $\Sigma'$ and $M'''$ from $\Sigma'$ to $\Sigma'''_\Gamma$. Up to isotopy and red-to-blue operations, every bichrome graph in $M'' \cup_{\Sigma'} M'''$ determines a set of marked points in $\Sigma'$ which provides an element of $\adD(\Sigma')$. This means we have
 \[
  \left[ \id_{\bbSigma'_{\bbGamma}} \right] = \sum_{i'=1}^{m'} \alpha'_{i'} \cdot \left[ (M''',T'''_{i'},0) \ast (M'',T''_{i'},0) \right],
 \]
 where $T''_{i'} \in \adSk(M'';P',P'_{i'})$ and $T'''_{i'} \in \Sk(M''';P'_{i'},P')$ for every integer $1 \leqs i' \leqs m'$ and for some $P'_1,\ldots,P'_{m'} \in \adD(\Sigma')$. Now we can apply Lemma~\ref{L:connection_lemma} to
 \[
  \left[ \left( (M'',T''_{i'},0) \circ \id_{\bbSigma_{\bbGamma}} \right) \ast \bbM_{\bbSigma'_{\bbGamma} \circ \bbSigma_{\bbGamma}} \right]
 \]
 and to
 \[
  \left[ \bbM'_{\bbSigma'_{\bbGamma} \circ \bbSigma''_{\bbGamma}} \ast \left( (M''',T'''_{i'},0) \circ \id_{\bbSigma''_{\bbGamma}} \right) \right]
 \]
 for every integer $1 \leqs i' \leqs m'$ in order to conclude.
\end{proof}

\section{Monoidality}\label{S:monoidality}

In this section, which runs parallel to \cite[Section~6.1]{D17}, we establish our main result: the $2$-func\-tor $\hat{\bfA}_\calC : \bfadCob_\calC \rightarrow \coCat_\Bbbk$ of Section~\ref{S:extended_universal_construction} is symmetric monoidal in the sense of \cite[Definition~D.16]{D17}, and thus provides an ETQFT. To do this, we start by introducing some notation. Let $\rmI(\calC)$ be a set of representatives of isomorphism classes of simple objects of $\calC$ containing $\one$, and let us fix the projective generator
\[
 G = \bigoplus_{V \in \rmI(\calC)} P_V
\]
of $\calC$, where $P_V$ denotes the projective cover of $V$. We decompose the identity morphism of $G$ as
\[
 \id_G = \sum_{V \in \rmI(\calC)} \iota_{P_V} \circ \pi_{P_V},
\]
where $\pi_{P_V} : G \to P_V$ and $\iota_{P_V} : P_V \to G$ are the projection and injection morphisms corresponding to the direct summand $P_V$. The \textit{$G$-labeled sphere} is defined to be the $1$-mor\-phism $\bbS^2_{(+,G)} : \varnothing \rightarrow \varnothing$ of $\bfadCob_\calC$ given by 
\[
 \left( S^2,P_{(+,G)},\{ 0 \} \right),
\]
where $P_{(+,G)} \subset S^2$ is given by a single marked point (the north pole) with orientation $+$ and label $G$. Its adjoint $1$-mor\-phism $\overline{\bbS^2_{(-,G)}} : \varnothing \rightarrow \varnothing$ in $\bfadCob_\calC$ is given by
\[
 \left( \overline{S^2},P_{(-,G)},\{ 0 \} \right),
\]
where $P_{(-,G)} \subset \overline{S^2}$ is obtained from $P_{(+,G)}$ by reversing its orientation. The counit for this adjunction is the $2$-mor\-phism $(\bbD^1 \times \bbS^2)_G : \id_\varnothing \Rightarrow \overline{\bbS^2_{(-,G)}} \sqcup \bbS^2_{(+,G)}$ of $\bfadCob_\calC$ given by
\[
 (D^1 \times S^2,D^1 \times P_{(+,G)},0)
\]
for the blue tangle $D^1 \times P_{(+,G)}$ from $\varnothing$ to $P_{(-,G)} \sqcup P_{(+,G)}$ of Figure~\ref{F:D_1_times_S_2}.

\begin{figure}[hb]\label{F:D_1_times_S_2}
 \centering
 \includegraphics{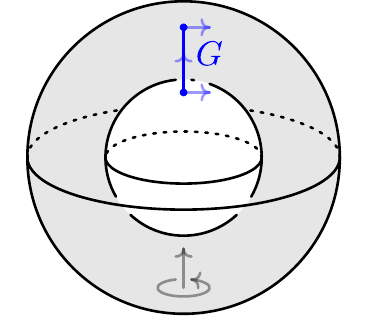}
 \caption{The $2$-morphism $(\bbD^1 \times \bbS^2)_G$ of $\bfadCob_\calC$.}
\end{figure}

Next, we consider the $2$-mor\-phisms $\overline{\bbD^3_{\pi_{\one}}} : \id_\varnothing \Rightarrow \overline{\bbS^2_{(-,G)}}$ and $\bbD^3_{\iota_{\one}} :  \id_\varnothing \Rightarrow \bbS^2_{(+,G)}$ of $\bfadCob_\calC$ given by
\begin{align*}
 \overline{\bbD^3_{\pi_{\one}}} &= \left( \overline{D^3},T_{\pi_{\one}},0 \right), 
 &
 \bbD^3_{\iota_{\one}} &= \left( D^3,T_{\iota_{\one}},0 \right)
\end{align*}
for the blue graphs $T_{\pi_{\one}} \subset \overline{D^3}$ from $\varnothing$ to $P_{(-,G)}$ and $T_{\eta_{\one}} \subset D^3$ from $\varnothing$ to $P_{(+,G)}$ of Figure~\ref{F:S_0_times_D_3}, where the morphisms $\pi_{\one} : G \to \one$ and $\iota_{\one} : \one \to G$ are defined by $\pi_{\one} = \epsilon_{\one} \circ \pi_{P_{\one}}$ and $\iota_{\one} = \iota_{P_{\one}} \circ \eta_{\one}$ for the projective cover morphism $\epsilon_{\one} : P_{\one} \to \one$ and the injective envelope morphism $\eta_{\one} : \one \to P_{\one}$. Notice that, when representing bichrome graphs embedded inside $3$-di\-men\-sion\-al cobordisms, we specify orientations for vertical boundaries of blue coupons. More specifically, arrows are directed from bottom base to top base, and thus prescribe the correct way of reading labels.

\begin{figure}[htb]\label{F:S_0_times_D_3}
 \centering
 \includegraphics{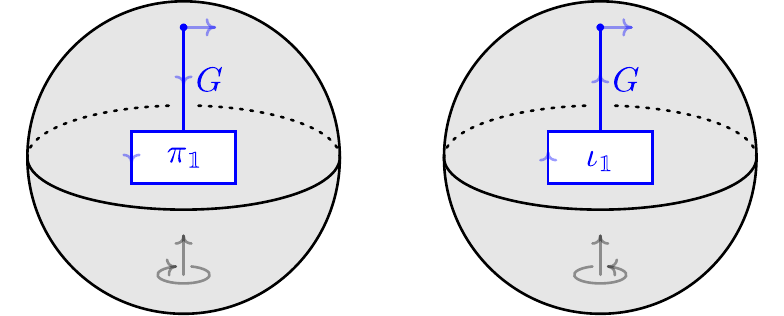}
 \caption{The $2$-morphisms $\overline{\bbD^3_{\pi_{\one}}}$ and $\bbD^3_{\iota_{\one}}$ of $\bfadCob_\calC$.}
\end{figure}

\begin{lemma}\label{L:cutting}
 For every object $\bbGamma = \Gamma$ of $\bfadCob_\calC$ and every object $\bbSigma_{\bbGamma} = (\Sigma_\Gamma,P,\lambda)$ of $\bfA_\calC(\bbGamma)$ we have
 \[
  \left[ (\bbD^1 \times \bbS^2)_G \disjun \id_{\bbSigma_{\bbGamma}} \right] = \calD \cdot  \left[ \overline{\bbD^3_{\pi_{\one}}} \disjun \bbD^3_{\iota_{\one}} \disjun \id_{\bbSigma_{\bbGamma}} \right] \in \bfA_\calC (\bbGamma) \left( \bbSigma_{\bbGamma},\overline{\bbS^2_{(-,G)}} \disjun \bbS^2_{(+,G)} \disjun \bbSigma_{\bbGamma} \right)
 \] 
 for the non-zero coefficient $\calD \in \Bbbk$ fixed in Section~\ref{S:3-manifold_invariant}.
\end{lemma}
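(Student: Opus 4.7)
The plan is to apply the triviality criterion of Lemma~\ref{L:triviality_lemma} in order to reduce the claimed identity to a comparison of $\rmL'_\calC$-invariants of two closed decorated $3$-manifolds differing only by a local replacement, and then to compute the ratio of these invariants as $\calD$ via the surgery formula together with the red-to-blue operation.

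First I would write out the effect of Lemma~\ref{L:triviality_lemma}: for every admissible choice of testing data $(M, T, 0)$ and $(M', T', 0)$ compatible with the prescribed boundary and Lagrangian data, the pairings with the LHS and RHS of the claimed identity produce two closed decorated $3$-manifolds $(N', \Gamma')$ and $(N, \Gamma)$, respectively. By construction, these two manifolds are obtained by the same gluing procedure from the same ingredients $M$, $M'$, and the identity cylinder $(\Sigma_\Gamma \cup_\Gamma \Sigma') \times I$ coming from $\id_{\bbSigma_\bbGamma}$, differing only in that a copy of $D^1 \times S^2$ equipped with the blue $G$-arc $D^1 \times P_{(+,G)}$ is inserted in $N'$, while the two disjoint $3$-balls $\overline{D^3} \sqcup D^3$ equipped with the coupons $\pi_\one$ and $\iota_\one$ are inserted in $N$, at the same site along a common pair of boundary spheres $\overline{S^2} \sqcup S^2$. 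The desired identity in $\bfA_\calC(\bbGamma)$ thus reduces to verifying $\rmL'_\calC(N', \Gamma') = \calD \cdot \rmL'_\calC(N, \Gamma)$ for every such choice.

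The key topological step is the observation that $N'$ is obtained from $N$ by a single $0$-framed surgery on a red unknot $O$, placed inside a small ball that contains the two coupon sites and encircles, in $N$, the portion of the $G$-labeled strand of $\Gamma$ that joins $\iota_\one$ to $\pi_\one$ through the intervening region. This realizes the standard Kirby-calculus move which replaces a pair of disjoint $3$-balls inside a $3$-manifold by a thickened sphere glued along the same boundary spheres. Consequently, if $L$ is a surgery presentation of $N$ in $S^3$ with $\ell$ components and signature $\sigma$, then $L \sqcup O$ presents $N'$ with $\ell + 1$ components and the same signature, and the definition of $\rmL'_\calC$ from Section~\ref{S:3-manifold_invariant} yields
\[
 \rmL'_\calC(N, \Gamma) = \calD^{-1-\ell} \delta^{-\sigma} F'_\intL(L \cup \Gamma), \qquad \rmL'_\calC(N', \Gamma') = \calD^{-2-\ell} \delta^{-\sigma} F'_\intL(L \sqcup O \cup \Gamma'),
\]
reducing the task to the skein identity $F'_\intL(L \sqcup O \cup \Gamma') = \calD^2 \cdot F'_\intL(L \cup \Gamma)$.

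To establish this last identity, the red-to-blue operation converts $O$ into a blue $\calL$-labeled loop carrying the integral $\intL$ as a coupon and encircling the $G$-strand inside an embedded solid torus. By the universal property of the coend $\calL$ together with the unimodularity of $\calC$, this configuration acts on the surrounding $G$-strand as the endomorphism $\calD^2 \cdot (\iota_\one \circ \pi_\one) = \calD^2 \cdot \iota_{P_\one} \circ \eta_\one \circ \epsilon_\one \circ \pi_{P_\one}$: the scalar $\calD^2 = \Delta_+ \Delta_-$ arises from the non-degenerate Hopf pairing of the coend evaluated on the integral, while the morphism $\iota_\one \circ \pi_\one$ implements precisely the splitting of the $G$-strand through the two coupons of $\Gamma$. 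The main obstacle I expect is the verification of this scalar and of the exact form of the endomorphism: this relies on careful bookkeeping of the normalizations fixed in Section~\ref{S:modular_categories}, of the decomposition $\id_G = \sum_{V \in \rmI(\calC)} \iota_{P_V} \circ \pi_{P_V}$, and of the unimodularity condition that kills all summands with $V \ne \one$. An analogous computation in the CGP setting is carried out in~\cite[Lemma~6.1]{D17} and serves as a template for this verification.
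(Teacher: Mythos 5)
Your proof follows essentially the same route as the paper: reduce via Lemma~\ref{L:triviality_lemma}, observe that the two resulting closed manifolds differ by a $0$-framed surgery on a red unknot encircling the $G$-strand, and compute the ratio of invariants from the surgery normalization. The paper handles the final skein step --- that the $0$-framed red unknot around the $G$-strand is admissibly skein equivalent to $\calD^2$ times the insertion of $\iota_\one \circ \pi_\one$ --- by citing \cite[Lemma~4.4]{DGGPR19} directly, and your sketch of this step via the red-to-blue operation, the integral of the coend, and unimodularity is consistent with the content of that lemma.
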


\begin{proof}
 The strategy is to use Lemma~\ref{L:triviality_lemma}, so let us fix a non-empty $2$-di\-men\-sion\-al cobordism $\Sigma'$ from $\Gamma$ to $\varnothing$ and connected $3$-di\-men\-sion\-al cobordisms $M$ and $M'$ from $\varnothing$ to $\Sigma_\Gamma \cup_\Gamma \Sigma'$ and from $\overline{S^2} \sqcup S^2 \sqcup (\Sigma_\Gamma \cup_\Gamma \Sigma')$ to $\varnothing$ respectively. In order to verify the equality, we have to consider all admissible sets of marked points $P' \in \adD(\Sigma')$ and all admissible bichrome graphs
 \begin{align*}
  T &\in \adSk \left( M;\varnothing,P \cup P' \right), &
  T' &\in \adSk \left( M';P_{(-,G)} \sqcup P_{(+,G)} \sqcup (P \cup P'),\varnothing \right).
 \end{align*}
 On one hand, if $L \subset S^3$ is a framed oriented link with $\ell$ components and signature $\sigma$ providing a surgery presentation for
 \[
  M \cup_{\Sigma_\Gamma \cup_\Gamma \Sigma'} \left( \overline{D^3} \sqcup D^3 \sqcup (\Sigma_\Gamma \times I) \right) \cup_{\overline{S^2} \sqcup S^2 \sqcup (\Sigma_\Gamma \cup_\Gamma \Sigma')} M',
 \]
 then, up to isotopy and admissible skein equivalence, the renormalized Lyubashenko invariant we need to compute is given by
 \[
  \calD^{-1-\ell} \delta^{-\sigma} F'_\intL \left( \pic{cutting_2} \right)
 \]
 for some endomorphism $f \in \End_\calC(G)$. On the other hand, a surgery presentation for
 \[
  M \cup_{\Sigma_\Gamma \cup_\Gamma \Sigma'} \left( (D^1 \times S^2) \sqcup (\Sigma_\Gamma \times I) \right) \cup_{\overline{S^2} \sqcup S^2 \sqcup (\Sigma_\Gamma \cup_\Gamma \Sigma')} M',
 \]
 is obtained from $L \subset S^3$ by adding a single red unknot with framing zero, and up to isotopy and admissible skein equivalence, the renormalized Lyubashenko invariant we need to compute is given by
 \[
  \calD^{-2-\ell} \delta^{-\sigma} F'_\intL \left( \pic{cutting_1} \right)
 \]
 for the same endomorphism $f \in \End_\calC(G)$ as above. Then the claim is a direct consequence of \cite[Lemma~4.4]{DGGPR19}.
\end{proof}

Our main result will be stated in terms of the c-equivalence relation introduced in Section~\ref{S:complete_lin_cat}. To do this, we first need to define the relevant coherence data for $\hat{\bfA}_\calC$, as in \cite[Definition~D.16]{D17}. We start with the definition of a linear functor
\[
 \bfepsilon : \Bbbk \rightarrow \bfA_\calC(\varnothing)
\]
sending the unique object of $\Bbbk$ to the object $\id_{\varnothing}$ of $\bfA_\calC(\varnothing)$.

\begin{proposition}\label{P:epsilon}
 The linear functor 
 \[ 
  \bfepsilon : \Bbbk \rightarrow \bfA_\calC(\varnothing)
 \]
 is a c-equivalence.
\end{proposition}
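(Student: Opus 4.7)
The plan is to apply Proposition~\ref{P:Morita_equivalence}: since $\bfepsilon(\Bbbk) = \{\id_\varnothing\}$, it suffices to check that $\bfepsilon$ is fully faithful and that the singleton $\{\id_\varnothing\}$ dominates $\bfA_\calC(\varnothing)$. For fully faithfulness, I would compute $\End_{\bfA_\calC(\varnothing)}(\id_\varnothing) \cong \Bbbk$ directly. Since $\bbGamma = \varnothing$, horizontal composition over the empty $1$-manifold is the monoidal product of $\rmadCob_\calC$, so for every $2$-mor\-phism $\bbM : \id_\varnothing \Rightarrow \id_\varnothing$ and every $\bbSigma' \in \calA'(\varnothing)$, monoidality of the TQFT $\rmV_\calC$ (Section~\ref{S:TQFT}) gives
\[
 \langle \id_{\bbSigma'}, \bbM \rangle = \id_{\rmV_\calC(\bbSigma')} \otimes \rmV_\calC(\bbM) = \rmL'_\calC(\bbM) \cdot \id_{\rmV_\calC(\bbSigma')}.
\]
Specializing to $\bbSigma' = \id_\varnothing$, where $\rmV_\calC(\bbSigma') = \Bbbk$, identifies the annihilator $\calA'(\varnothing)^\perp(\id_\varnothing,\id_\varnothing)$ with the kernel of $\rmL'_\calC$; hence $\rmL'_\calC$ descends to an injective linear map $\End_{\bfA_\calC(\varnothing)}(\id_\varnothing) \hookrightarrow \Bbbk$, which is surjective because $\rmL'_\calC(\id_{\id_\varnothing}) = 1$ (the empty $3$-manifold is the monoidal unit). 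Under this identification $\bfepsilon$ becomes the identity of $\Bbbk$.

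For domination, let $\bbSigma_\varnothing$ be an arbitrary object of $\bfA_\calC(\varnothing)$. If $\rmV_\calC(\bbSigma_\varnothing) = 0$ then the same pairing computation makes $\id_{\bbSigma_\varnothing}$ vanish in $\bfA_\calC(\varnothing)$, so $\bbSigma_\varnothing$ is already a zero object and is trivially dominated. Otherwise, the modularity of $\calC$ guarantees that the pairing induced by $\rmL'_\calC$ on $\rmV'_\calC(\bbSigma_\varnothing) \times \rmV_\calC(\bbSigma_\varnothing)$ is non-degenerate, so I fix a basis $\{v_i\}_{i=1}^d$ of $\rmV_\calC(\bbSigma_\varnothing)$ together with a dual basis $\{v_i^*\}_{i=1}^d$ in $\rmV'_\calC(\bbSigma_\varnothing)$. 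Lifting along the surjective quotient maps $\calV(\bbSigma_\varnothing) \twoheadrightarrow \rmV_\calC(\bbSigma_\varnothing)$ and $\calV'(\bbSigma_\varnothing) \twoheadrightarrow \rmV'_\calC(\bbSigma_\varnothing)$ from the universal construction produces admissible $2$-mor\-phisms $f'_i : \id_\varnothing \Rightarrow \bbSigma_\varnothing$ and $f_i : \bbSigma_\varnothing \Rightarrow \id_\varnothing$ in $\bfadCob_\calC$ with $\rmV_\calC(f'_i) = v_i$ and $\rmV_\calC(f_i) = v_i^*$.

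It remains to verify that $\id_{\bbSigma_\varnothing} - \sum_i f'_i \circ f_i$ lies in the annihilator of $\bfA_\calC(\varnothing)(\bbSigma_\varnothing, \bbSigma_\varnothing)$. Pairing with $\id_{\bbSigma'_\varnothing}$ and using monoidality once more, the horizontal factor contributes $\id_{\rmV_\calC(\bbSigma')}$ while the vertical factor reduces to comparing $\sum_i \rmV_\calC(f'_i) \circ \rmV_\calC(f_i) = \sum_i v_i \otimes v_i^*$ with $\id_{\rmV_\calC(\bbSigma_\varnothing)}$, which agree by the dual-basis identity. The main conceptual obstacle is precisely the ingredient that simultaneously provides the dual bases and ensures that checks at the level of $\rmV_\calC$ witness equalities in $\bfA_\calC(\varnothing)$, namely the non-degeneracy of the pairing together with the symmetric monoidality of $\rmV_\calC$. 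Both stem from the modularity hypothesis on $\calC$ via the results of \cite{DGGPR19} recalled in Section~\ref{S:TQFT}, so no genuinely new topological or categorical input should be needed beyond carefully assembling them.
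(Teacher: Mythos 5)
Your argument is correct and follows the same overall strategy as the paper (reduce to Proposition~\ref{P:Morita_equivalence}), but the domination step takes a genuinely different route. The paper proves that $\id_\varnothing$ dominates $\bfA_\calC(\varnothing)$ by a chain of topological reductions: Lemma~\ref{L:Morita_reduction} shrinks the dominating set to spheres with admissible marked points, admissible skein equivalence plus the projective generator $G$ reduces further to $\bbS^2_{(+,G)}$, and Lemma~\ref{L:cutting} (a surgery argument inside $\rmL'_\calC$) then factors the identity of $\bbS^2_{(+,G)}$ through $\id_\varnothing$. You instead extract the required decomposition of $\id_{\bbSigma_\varnothing}$ directly from a dual basis for the pairing $\rmV'_\calC(\bbSigma_\varnothing) \times \rmV_\calC(\bbSigma_\varnothing) \to \Bbbk$, using the symmetric monoidality of the unextended TQFT $\rmV_\calC$ from \cite{DGGPR19} to reduce all annihilator tests to identities in $\Vect_\Bbbk$. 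Two small corrections are in order: non-degeneracy of that descended pairing is automatic from the universal construction (we quotient precisely by the annihilators), so it is not where modularity enters; the input from \cite{DGGPR19} you actually rely on, and use silently when fixing a finite basis $\{v_i\}_{i=1}^d$ and a dual one inside $\rmV'_\calC(\bbSigma_\varnothing)$ rather than in the full algebraic dual, is the finite-dimensionality of $\rmV_\calC(\bbSigma_\varnothing)$ for the strongly admissible surfaces that appear as objects of $\bfA_\calC(\varnothing)$. Your hands-on computation of $\End_{\bfA_\calC(\varnothing)}(\id_\varnothing) \cong \Bbbk$ is a spelled-out version of the paper's terse remark that $\id_\varnothing$ is simple. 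The trade-off: you use $\rmV_\calC$ as a black box (monoidality, finiteness), whereas the paper runs the same skein/surgery machinery it will reuse for Proposition~\ref{P:mu}, keeping the two arguments uniform.
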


\begin{proof}
 Lemma~\ref{L:Morita_reduction} implies that the linear category $\bfA_\calC(\varnothing)$ is dominated by the family of objects 
 \[
  \{ (S^2,P,\{ 0 \}) \in \bfA_\calC(\varnothing) \mid P \in \adD(D^2) \}.
 \]
 For every object $(S^2,P,\{ 0 \})$ in this family, let $F_\calC(P) \in \Proj(\calC)$ denote the tensor product of all the labels of $P$ adjusted for orientation (a marked point with label $V$ contributes a factor of $V$ to the tensor product $F_\calC(P)$ if its orientation is positive, and it contributes a factor of $V^*$ otherwise). Since $G$ is a projective generator of $\calC$, there exist morphisms $f_1,\ldots,f_m \in \calC(F_\calC(P),G)$ and $f'_1,\ldots,f'_m \in \calC(G,F_\calC(P))$ satisfying
 \[
  \id_{F_\calC(P)} = \sum_{i=1}^m f'_i \circ f_i.
 \]
 Then, up to admissible skein equivalence, the object $\bbS^2_{(+,G)}$ dominates $\bfA_\calC(\varnothing)$. Now it is easy to show that, taking $\bbSigma_{\bbGamma} = \bbS^2_{(+,G)}$, Lemma~\ref{L:cutting} implies that also the object $\id_\varnothing$ dominates $\bfA_\calC(\varnothing)$. Since $\id_{\varnothing}$ is simple, the result follows immediately from Proposition~\ref{P:Morita_equivalence}.
\end{proof}

Next, we consider the $2$-trans\-for\-ma\-tion
\[
 \bfmu : \sqtimes \circ \left( \bfA_\calC \times \bfA_\calC \right) \Rightarrow \bfA_\calC \circ \disjun
\]
of Section~\ref{S:extended_universal_construction}. Recall that in Proposition~\ref{P:lax_monoidality_ETQFT} we proved $\bfmu_{\bbGamma,\bbGamma'}$ is faithful for all objects $\bbGamma, \bbGamma' \in \bfadCob_\calC$. We will now prove that this functor is also essentially surjective and full, thus promoting it to a c-equivalence.

\begin{proposition}\label{P:mu}
 For all objects $\bbGamma$ and $\bbGamma'$ of $\bfadCob_\calC$
 \[
  \bfmu_{\bbGamma,\bbGamma'} : \bfA_\calC(\bbGamma) \sqtimes \bfA_\calC(\bbGamma') \rightarrow \bfA_\calC(\bbGamma \disjun \bbGamma')
 \]
 is a c-equivalence.
\end{proposition}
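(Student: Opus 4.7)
By Proposition~\ref{P:Morita_equivalence} and the faithfulness already established in Proposition~\ref{P:lax_monoidality_ETQFT}, it will suffice to show that $\bfmu_{\bbGamma,\bbGamma'}$ is full and that its essential image dominates $\bfA_\calC(\bbGamma \disjun \bbGamma')$. Domination is immediate from Lemma~\ref{L:Morita_reduction}: I fix non-empty $2$-cobordisms $\Sigma_\Gamma : \varnothing \to \Gamma$ and $\Sigma_{\Gamma'} : \varnothing \to \Gamma'$ together with Lagrangian subspaces $\lambda_\Gamma$ and $\lambda_{\Gamma'}$, and apply that lemma to the non-empty cobordism $\Sigma_\Gamma \sqcup \Sigma_{\Gamma'}$ with Lagrangian $\lambda_\Gamma \oplus \lambda_{\Gamma'}$. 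Because admissibility is a connected-component condition, every $P \in \adD(\Sigma_\Gamma \sqcup \Sigma_{\Gamma'})$ splits as $P_\Gamma \sqcup P_{\Gamma'}$ with $P_\Gamma \in \adD(\Sigma_\Gamma)$ and $P_{\Gamma'} \in \adD(\Sigma_{\Gamma'})$; the resulting object then equals $\bfmu_{\bbGamma,\bbGamma'}((\Sigma_\Gamma, P_\Gamma, \lambda_\Gamma), (\Sigma_{\Gamma'}, P_{\Gamma'}, \lambda_{\Gamma'}))$, so the image of $\bfmu_{\bbGamma,\bbGamma'}$ on objects dominates.

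For fullness, my plan is to first use Lemma~\ref{L:connection_lemma} to pick a convenient representative of an arbitrary morphism of the target, and then to invoke Lemma~\ref{L:cutting} in order to split it along a separating sphere. Given $\bbSigma_\Gamma, \bbSigma''_\Gamma \in \bfA_\calC(\bbGamma)$ and $\bbSigma_{\Gamma'}, \bbSigma''_{\Gamma'} \in \bfA_\calC(\bbGamma')$, I choose any non-empty connected $3$-cobordisms with corners $M_\Gamma : \Sigma_\Gamma \to \Sigma''_\Gamma$ and $M_{\Gamma'} : \Sigma_{\Gamma'} \to \Sigma''_{\Gamma'}$, and form a connected $3$-cobordism $M$ from $\Sigma_\Gamma \sqcup \Sigma_{\Gamma'}$ to $\Sigma''_\Gamma \sqcup \Sigma''_{\Gamma'}$ by excising interior $3$-balls from $M_\Gamma$ and $M_{\Gamma'}$ and identifying their boundary $2$-spheres. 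Lemma~\ref{L:connection_lemma}, applied to $\bbGamma \disjun \bbGamma'$, then expresses any $[\bbM] \in \bfA_\calC(\bbGamma \disjun \bbGamma')(\bbSigma_\Gamma \disjun \bbSigma_{\Gamma'}, \bbSigma''_\Gamma \disjun \bbSigma''_{\Gamma'})$ as $\sum_i \alpha_i [M, T_i, 0]$ with each $T_i$ admissible.

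The principal obstacle will be to rewrite each $[M, T_i, 0]$ as a linear combination of disjoint-union morphisms. My strategy is to thicken the separating $2$-sphere $S \subset M$ to a bicollar $S \times [-\epsilon,\epsilon]$ and exploit admissible skein equivalences, together with the fact that $G = \bigoplus_{V \in \rmI(\calC)} P_V$ is a projective generator of $\calC$, in order to place each $T_i$ into a normal form meeting $S \times \{0\}$ transversely in a single blue strand labeled by $G$: any collection of strands and coupons crossing the bicollar can be pushed off $S$ to either side and replaced there by compositions with the projections $\pi_{P_V}$ and injections $\iota_{P_V}$, so that what remains across $S$ is a single $G$-labeled strand. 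The central cylindrical piece of $M$ then appears as $(\bbD^1 \times \bbS^2)_G$, and Lemma~\ref{L:cutting} (propagated through the rest of $M$ by $2$-functoriality of $\bfA_\calC$ and by horizontal/vertical composition) replaces this cylinder by $\overline{\bbD^3_{\pi_\one}} \disjun \bbD^3_{\iota_\one}$ up to the non-zero scalar $\calD$ from Section~\ref{S:3-manifold_invariant}. This produces a representative of the form $[\tilde M_\Gamma \sqcup \tilde M_{\Gamma'}, T_{\Gamma,i} \sqcup T_{\Gamma',i}, 0]$ which manifestly lies in the image of $\bfmu_{\bbGamma,\bbGamma'}$. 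The delicate bookkeeping, which consists in preserving admissibility at every skein step and in placing the projective-trick coupons on the correct side of $S$, will follow the pattern of the analogous argument in \cite[Section~6.1]{D17}.
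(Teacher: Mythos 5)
Your proposal follows essentially the same route as the paper's proof: domination via Lemma~\ref{L:Morita_reduction} applied to $\Sigma_\Gamma \sqcup \Sigma_{\Gamma'}$, faithfulness already from Proposition~\ref{P:lax_monoidality_ETQFT}, and fullness by choosing a connected model of the ambient $3$-cobordism separated by a $2$-sphere (the paper takes $M \cup_{S^2} M'$ glued along a central $D^1 \times S^2$, which is your connected-sum construction), then normalizing the graph where it meets the separating sphere and splitting with Lemma~\ref{L:cutting}. The one place where your description does not match the mechanism actually used is the normalization step. The morphisms $\pi_{P_V}$ and $\iota_{P_V}$ are the biproduct projections and injections of $G$ onto its indecomposable summands and are not the relevant maps here. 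What the paper does is: first apply red-to-blue operations so that only blue edges cross the separating sphere (a step you do not mention, but which is needed — a red edge cannot simply be ``pushed off''); then fuse all strands crossing the sphere into a single one whose label $V$ is projective, because $\Proj(\calC)$ is a tensor ideal and at least one crossing strand was projective; and finally invoke the generator property of $G$, i.e.\ morphisms $f_i \in \calC(V,G)$ and $f'_i \in \calC(G,V)$ with $\id_V = \sum_i f'_i \circ f_i$, to replace the $V$-labeled strand by a sum of $G$-labeled ones. With these two adjustments — spelling out red-to-blue, and replacing $\pi_{P_V}, \iota_{P_V}$ by the $f_i, f'_i$ coming from the fact that $G$ is a projective generator — your argument coincides with the paper's.
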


\begin{proof}
 Again, the strategy is to use Proposition~\ref{P:Morita_equivalence}. First of all, if $\bfA_\calC(\bbGamma)$ is dominated by 
 \[
  \left\{ (\Sigma,P,\lambda) \in \bfA_\calC(\bbGamma) \bigm| P \in \adD(\Sigma) \right\}
 \]
 for some fixed $\Sigma$ and $\lambda$, and if $\bfA_\calC(\bbGamma')$ is dominated by 
 \[
  \left\{ (\Sigma',P',\lambda') \in \bfA_\calC(\bbGamma') \Bigm| P' \in \adD(\Sigma') \right\}
 \]
 for some fixed $\Sigma'$ and $\lambda'$, then $\bfA_\calC(\bbGamma \disjun \bbGamma')$ is dominated by
 \[
  \left\{ (\Sigma \sqcup \Sigma',P \sqcup P',\lambda \oplus \lambda') \in \bfA_\calC(\bbGamma \disjun \bbGamma') \Bigm| P \in \adD(\Sigma), P' \in \adD(\Sigma') \right\},
 \]
 as follows from Lemma~\ref{L:Morita_reduction}. Therefore, $\bfmu_{\bbGamma,\bbGamma'}$ defines a bijection between dominating sets. Furthermore, we already showed in Proposition~\ref{P:lax_monoidality_ETQFT} that $\bfmu_{\bbGamma,\bbGamma'}$ is faithful. This means we only need to show that $\bfmu_{\bbGamma,\bbGamma'}$ is full. To see it, we need to show that, for all objects 
 \begin{align*}
  \bbSigma_{\bbGamma} &= (\Sigma_\Gamma,P,\lambda)
  \in \bfA_\calC(\bbGamma), &
  \bbSigma''_{\bbGamma} &= (\Sigma''_\Gamma,P'',\lambda'') 
  \in \bfA_\calC(\bbGamma), \\*
  \bbSigma_{\bbGamma'} &= (\Sigma_{\Gamma'},P',\lambda')
  \in \bfA_\calC(\bbGamma'), &
  \bbSigma''_{\bbGamma'} &= (\Sigma''_{\Gamma'},P''',\lambda''')
  \in \bfA_\calC(\bbGamma'),
 \end{align*}
 every morphism
 \[
  [ \bbM_{\bbGamma \disjun \bbGamma'} ] \in \bfA_\calC(\bbGamma \disjun \bbGamma') \left( \bbSigma_{\bbGamma} \disjun \bbSigma_{\bbGamma'}, \bbSigma''_{\bbGamma} \disjun \bbSigma''_{\bbGamma'} \right)
 \]
 can be written as
 \[
  \sum_{i=1}^m \alpha_i \cdot \left[\bbM_{\bbGamma,i} \disjun \bbM_{\bbGamma',i} \right]
 \]
 for some scalars $\alpha_i \in \Bbbk$ and some morphisms 
 \begin{align*}
  [ \bbM_{\bbGamma,i} ] &\in \bfA_\calC(\bbGamma)(\bbSigma_{\bbGamma},\bbSigma''_{\bbGamma}), 
  &
  [ \bbM_{\bbGamma',i} ] &\in \bfA_\calC(\bbGamma')(\bbSigma_{\bbGamma'},\bbSigma''_{\bbGamma'}).
 \end{align*}
 In order to do so, let us consider non-empty connected $3$-di\-men\-sion\-al cobordisms with corners $M$ from $\Sigma_\Gamma \sqcup \overline{S^2}$ to $\Sigma''_\Gamma$ and $M'$ from $S^2 \sqcup \Sigma_{\Gamma'}$ to $\Sigma''_{\Gamma'}$, which determine the cobordism with corners
 \[
  M \cup_{S^2} M' := \left( ( \Sigma_\Gamma \times I ) \sqcup (D^1 \times S^2) \sqcup ( \Sigma_{\Gamma'} \times I ) \right) \cup_{\left( \Sigma_\Gamma \sqcup \overline{S^2} \sqcup S^2 \sqcup \Sigma_{\Gamma'} \right)} (M \sqcup M')
 \]
 from $\Sigma_\Gamma \sqcup \Sigma_{\Gamma'}$ to $\Sigma''_\Gamma \sqcup \Sigma''_{\Gamma'}$. Then, since $M \cup_{S^2} M'$ is non-empty and connected, Lemma~\ref{L:connection_lemma} implies the vector space $\bfA_\calC(\bbGamma \disjun \bbGamma') \left( \bbSigma_{\bbGamma} \disjun \bbSigma_{\bbGamma'}, \bbSigma''_{\bbGamma} \disjun \bbSigma''_{\bbGamma'} \right)$ is generated by vectors of the form
 \[
  \left[ M \cup_{S^2} M',T,0 \right]
 \]
 for all admissible bichrome graphs $T$ inside $M \cup_{S^2} M'$ from $P \sqcup P'$ to $P'' \sqcup P'''$. Let us choose such an admissible bichrome graph $T$, and let us show that the corresponding morphism of $\bfA_\calC(\bbGamma \disjun \bbGamma') \left( \bbSigma_{\bbGamma} \disjun \bbSigma_{\bbGamma'}, \bbSigma''_{\bbGamma} \disjun \bbSigma''_{\bbGamma'} \right)$ lies in the image of $\mu_{\bbGamma,\bbGamma'}$. Since $T$ is admissible, we can suppose, up to red-to-blue operations, that $D^1 \times S^2$ intersects only blue edges of $T$. Furthermore, up to isotopy, we can suppose that $D^1 \times S^2$ intersects a projective blue edge of $T$. Then, up to admissible skein equivalence, using the fact that $\Proj(\calC)$ is an ideal, we can suppose that $D^1 \times S^2$ intersects a single blue edge whose color $V$ is a projective object of $\calC$. Now, since $G$ is a projective generator of $\calC$, there exist morphisms $f_1,\ldots,f_m \in \calC(V,G)$ and $f'_1,\ldots,f'_m \in \calC(G,V)$ satisfying
 \[
  \id_V = \sum_{i=1}^m f'_i \circ f_i.
 \]
 This means we can decompose $[M \cup_{S^2} M',T,0]$ as
 \[
  \sum_{i=1}^m \left[ \left( (M,T_{f_i},0) \disjun {}(M,T_{f'_i},0) \right) \ast \left( \id_{\bbSigma_{\bbGamma}} \disjun {}(\bbD^1 \times \bbS^2)_G \disjun \id_{\bbSigma_{\bbGamma'}} \right) \right].
 \]
 for some bichrome graphs $T_{f_i}$ inside $M$ from $P \sqcup P_{(-,G)}$ to $P''$ and $T_{f'_i}$ inside $M$ from $P_{(+,G)} \sqcup P'$ to $P'''$ induced by $f_i$ and by $f'_i$ respectively. Then, Lemma~\ref{L:cutting} implies
 \begin{align*}
  &[M \cup_{S^2} M',T,0] \\*
  &\hspace*{\parindent} = \sum_{i=1}^n \left[ \left( (M,T_{f_i},0) \disjun {}(M,T_{f'_i},0) \right) \ast \left( \id_{\bbSigma_{\bbGamma}} \disjun {}(\bbD^1 \times \bbS^2)_G \disjun \id_{\bbSigma_{\bbGamma'}} \right) \right] \\*
  &\hspace*{\parindent} = \sum_{i=1}^n \calD \cdot \left[ \left( (M,T_{f_i},0) \disjun {}(M,T_{f'_i},0) \right) \ast \left( \id_{\bbSigma_{\bbGamma}} \disjun \left( \overline{\bbD^3_{\pi_{\one}}} \disjun \bbD^3_{\iota_{\one}} \right) \disjun \id_{\bbSigma_{\bbGamma'}} \right) \right] \\*
  &\hspace*{\parindent} = \sum_{i=1}^n \calD \cdot \left[ \left( (M_\Sigma,T_{f_i},0) \circ \left( \id_{\bbSigma_{\bbGamma}} \disjun \overline{\bbD^3_{\pi_{\one}}} \right) \right) \disjun 
  \left( (M_{\Sigma'},T_{f'_i},0) \circ \left( \bbD^3_{\iota_{\one}} \disjun \id_{\bbSigma_{\bbGamma'}} \right) \right) \right].
 \end{align*}
\end{proof}

Let us now prove our main result.

\begin{theorem}\label{T:symmetric_monoidality}
 The $2$-functor $\hat{\bfA}_\calC : \bfadCob_\calC \rightarrow \coCat_\Bbbk$ is symmetric monoidal.
\end{theorem}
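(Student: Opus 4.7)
The plan is to verify the data and axioms of a symmetric monoidal 2-functor, as codified in \cite[Definition~D.16]{D17}, for $\hat{\bfA}_\calC$. The substantive content has essentially been assembled in the preceding two sections: Propositions~\ref{P:epsilon} and \ref{P:mu} show that the would-be unit and multiplication components are c-equivalences, and Proposition~\ref{P:completion} tells us that $\cmpl$ is a symmetric monoidal strict 2-functor, so it promotes c-equivalences to honest equivalences in $\coCat_\Bbbk$ and transports monoidal structure cleanly.

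First I would specify the unit and multiplication data. For the unit, I take $\hat{\bfepsilon} : \hat{\Bbbk} \rightarrow \hat{\bfA}_\calC(\varnothing)$ obtained by completing the linear functor of Proposition~\ref{P:epsilon}; this is an equivalence in $\coCat_\Bbbk$ directly from the definition of c-equivalence. For the multiplication, I take $\hat{\bfmu} := \cmpl \triangleright \bfmu$, the completion of the 2-transformation constructed in Section~\ref{S:extended_universal_construction}; its components $\hat{\bfmu}_{\bbGamma,\bbGamma'} : \hat{\bfA}_\calC(\bbGamma) \csqtimes \hat{\bfA}_\calC(\bbGamma') \rightarrow \hat{\bfA}_\calC(\bbGamma \disjun \bbGamma')$ are equivalences by Proposition~\ref{P:mu}, again after applying the completion functor.

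Next I would produce the remaining coherence data — associator, left and right unitors, and braiding modifications — by simply transporting the corresponding coherence 2-morphisms of the symmetric monoidal 2-category $\bfadCob_\calC$ through $\bfA_\calC$ and then completing. Concretely, for each appropriate tuple of objects of $\bfadCob_\calC$, I take the equivalence class of the canonical invertible 2-morphism of $\bfadCob_\calC$ witnessing rearrangement of disjoint unions, apply $\bfA_\calC$, and whisker by $\cmpl$; since both 2-functors preserve invertibility of 2-cells, the resulting modifications are invertible. The coherence axioms (pentagon, triangle, the two hexagons, and the syllepsis/symmetry axioms) are then inherited directly: they reduce, component by component, to the corresponding axioms satisfied in $\bfadCob_\calC$, and $\cmpl$ being strict monoidal preserves them on the nose.

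The main obstacle is bookkeeping rather than mathematics: one must verify that the various 2-morphisms of $\bfadCob_\calC$ playing the role of coherence data really do descend to well-defined and invertible modifications in the quotient linear categories, and that pasting identities survive this descent. This is secured by the fact that the universal construction of Section~\ref{S:extended_universal_construction} produces a genuine 2-functor, so pasting diagrams of invertible 2-morphisms of $\bfadCob_\calC$ are sent to commuting pasting diagrams of invertible natural transformations in $\bfCat_\Bbbk$, which are preserved by the strict symmetric monoidal 2-functor $\cmpl$. The argument is therefore parallel to, but strictly simpler than, the one given in \cite[Section~6.1]{D17} for the relative modular case, since here no grading data needs to be tracked.
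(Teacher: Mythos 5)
Your high-level strategy matches the paper's: the substantive content is Propositions~\ref{P:epsilon} and \ref{P:mu} together with the symmetric monoidality of $\cmpl$ from Proposition~\ref{P:completion}, and the remainder is assembling coherence data. However, there are two points where your account diverges from what is actually needed, and the first in particular is a real imprecision rather than just bookkeeping.

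First, the associator, unitor, and symmetry modifications for a symmetric monoidal $2$-functor are \emph{not} images of coherence $2$-morphisms of the source $2$-category $\bfadCob_\calC$. They are modifications comparing two different pastings of the $2$-functor's own data $\bfmu$ and $\bfepsilon$ (e.g.\ $\bfmu_{\bbGamma,\bbGamma' \disjun \bbGamma''} \circ (\id \sqtimes \bfmu_{\bbGamma',\bbGamma''})$ versus $\bfmu_{\bbGamma \disjun \bbGamma',\bbGamma''} \ast (\bfmu_{\bbGamma,\bbGamma'} \sqtimes \id)$). The paper's key observation, which you do not state, is that these two pastings are \emph{literally equal} by the quasi-strictness of $\bfadCob_\calC$, so all the coherence modifications may be chosen to be identity $2$-modifications and the axioms of \cite[Definition~D.16]{D17} are then vacuous. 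Your narrative of ``transporting coherence $2$-morphisms from $\bfadCob_\calC$ and checking they satisfy the pentagon, hexagons, etc.'' is both conceptually off-target and would require work that the paper avoids entirely.

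Second, the multiplication $2$-transformation for $\hat{\bfA}_\calC$ is not simply $\cmpl \triangleright \bfmu$; its source must be $\csqtimes \circ (\hat{\bfA}_\calC \times \hat{\bfA}_\calC) = \cmpl \circ {\sqtimes} \circ (\cmpl \times \cmpl) \circ (\bfA_\calC \times \bfA_\calC)$, whereas the source of $\cmpl \triangleright \bfmu$ is $\cmpl \circ {\sqtimes} \circ (\bfA_\calC \times \bfA_\calC)$. The correct definition, as in the paper, is the composite $(\cmpl \triangleright \bfmu) \circ (\bfchi \triangleleft (\bfA_\calC \times \bfA_\calC))$, where $\bfchi$ is the tensorator supplied by Proposition~\ref{P:completion}. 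Omitting $\bfchi$ means the object you write down is not a $2$-transformation between the correct $2$-functors. Neither of these issues is fatal to the method, but both need to be fixed before the argument is complete.
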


\begin{proof}
 The claim follows directly from the results we have established up to here, but in order to prove it we need to check carefully that all the conditions required by \cite[Definition~D.16]{D17} are met. First of all, the completion $\cmpl \circ \bfepsilon$ of the linear functor $\bfepsilon$ is an equivalence, where $\cmpl$ denotes the $2$-func\-tor of Proposition~\ref{P:co_strict_2-funct}. Indeed, this follows immediately from Proposition~\ref{P:epsilon}. Next, thanks to Proposition~\ref{P:mu}, the $2$-trans\-for\-ma\-tion $( \cmpl \triangleright \bfmu ) \circ ( \bfchi \triangleleft (\bfF \times \bfF))$ is a composition of equivalences, where the $2$-trans\-for\-ma\-tion $\bfchi : \cmpl \circ {\sqtimes} \circ (\cmpl \times \cmpl) \Rightarrow \cmpl \circ \sqtimes$ is part of the coherence data of the symmetric monoidal $2$-func\-tor $\cmpl$ given by Proposition~\ref{P:completion} (see \cite[Definition~D.6]{D17} for a definition of the operation of left whiskering appearing here). Furthermore, we claim
 \begin{gather*}
  \bfmu_{\varnothing,\bbGamma} \circ (\bfepsilon \sqtimes \id_{\bfA_\calC(\bbGamma)}) = \id_{\bfA_\calC(\bbGamma)}, \quad \bfmu_{\bbGamma,\varnothing} \ast (\id_{\bfA_\calC(\bbGamma)} \sqtimes \bfepsilon) = \id_{\bfA_\calC(\bbGamma)}, \\
  \bfmu_{\bbGamma,\bbGamma' \disjun \bbGamma''} \circ \left( \id_{\bfA_\calC(\bbGamma)} \sqtimes \bfmu_{\bbGamma',\bbGamma''} \right) = \bfmu_{\bbGamma \disjun \bbGamma',\bbGamma''} \ast \left( \bfmu_{\bbGamma,\bbGamma'} \sqtimes \id_{\bfA_\calC(\bbGamma'')} \right)
 \end{gather*}
 for all $\bbGamma, \bbGamma', \bbGamma'' \in \bfadCob_{\calC}$. Indeed, this is an immediate consequence of the quasi-strictness of $\bfadCob_{\calC}$. This means that we can assemble all the coherence data for $\hat{\bfA}_\calC$ simply from identity $2$-mod\-i\-fi\-ca\-tions. Therefore, all the conditions we need to check are trivially satisfied.
\end{proof}

\section{Identification of the image}\label{S:identification_of_image}

In this section, which is analogue to \cite[Chapter~7]{D17}, we characterize the ETQFT $\hat{\bfA}_\calC$ by means of the modular category $\calC$. First, we identify the linear category associated to the generating object of $\bfadCob_{\calC}$, the circle. Next, we study linear functors associated to generating $1$-morphisms of $\bfadCob_\calC$, discs and pants.

\subsection{Circle category}\label{S:circle}

We start by describing the \textit{circle category} of $\hat{\bfA}_\calC$, which is the linear category $\hat{\bfA}_\calC(\bbS^1)$ for the generating object $\bbS^1 = S^1$ of $\bfadCob_\calC$. We will show that $\hat{\bfA}_\calC(\bbS^1)$ is equivalent to the category $\Proj(\calC)$ of projective objects of $\calC$.

\begin{definition}\label{D:disc}
 The \textit{$V$-la\-beled disc} $\bbD^2_{(+,V)} : \varnothing \rightarrow \bbS^1$ is the $1$-mor\-phism of $\bfCob_\calC$ given by 
 \[
  \left( D^2,P_{(+,V)},\{ 0 \} \right)
 \]
 where $P_{(+,V)} \subset D^2$ is given by a single marked point (the center) with orientation $+$ and label $V$. Its adjoint $1$-mor\-phism $\overline{\bbD^2_{(-,V)}} : \bbS^1 \rightarrow \varnothing$ in $\bfadCob_\calC$ is given by 
 \[
  \left( \overline{D^2},P_{(-,V)},\{ 0 \} \right)
 \]
 where $P_{(-,V)} \subset \overline{D^2}$ is obtained from $P_{(+,V)}$ by reversing its orientation.
\end{definition}

Let
\[
 \rmF : \Proj(\calC) \rightarrow \bfA_\calC(\bbS^1)
\]
be the linear functor sending every object $V$ of $\Proj(\calC)$ to the object $\bbD^2_{(+,V)}$ of $\bfA_\calC(\bbS^1)$, and every morphism $f$ of $\calC(V,V')$ to the morphism $[ (\bbD^2 \times \bbI)_f ]$ of $\bfA_\calC(\bbS^1)(\bbD^2_{(+,V)},\bbD^2_{(+,V')})$, where the $2$-mor\-phism $(\bbD^2 \times \bbI)_f : \bbD^2_{(+,V)} \Rightarrow \bbD^2_{(+,V')}$ of $\bfadCob_\calC$ is given by
\[
 (D^2 \times I,T_f,0)
\]
for the blue graph $T_f \subset D^2 \times I$ from $P_{(+,V)}$ to $P_{(+,V')}$ of Figure~\ref{F:D_2_times_I}.

\begin{figure}[hbt]\label{F:D_2_times_I}
 \centering
 \includegraphics{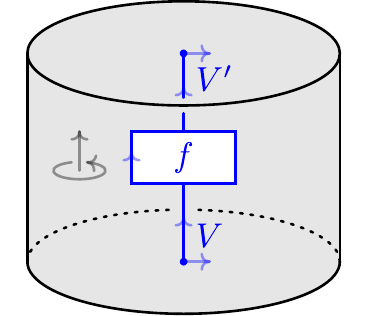}
 \caption{The $2$-morphism $(\bbD^2 \times \bbI)_f$ of $\bfadCob_\calC$.}
\end{figure}

\begin{proposition}\label{P:univ_lin_cat}
 The linear functor
 \[ 
  \rmF : \Proj(\calC) \rightarrow \bfA_\calC(\bbS^1)
 \]
 is a c-equivalence.
\end{proposition}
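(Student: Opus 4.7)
The plan is to verify the three hypotheses of Proposition~\ref{P:Morita_equivalence}: that $\rmF$ is faithful, full, and that its image dominates $\bfA_\calC(\bbS^1)$. These three items will be attacked using, respectively, Lemma~\ref{L:triviality_lemma}, Lemma~\ref{L:connection_lemma}, and Lemma~\ref{L:Morita_reduction}, together with the Reshetikhin--Turaev functor $F_\calC$ and the non-degeneracy of the modified trace on $\Proj(\calC)$.

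\textbf{Domination.} First I would apply Lemma~\ref{L:Morita_reduction} to $\Sigma = D^2$ (viewed as a cobordism $\varnothing \to S^1$) and $\lambda = \{0\}$, which shows that $\bfA_\calC(\bbS^1)$ is dominated by objects of the form $(D^2,P,\{0\})$ with $P \in \adD(D^2)$. For each such $P$, set $V := F_\calC(P) \in \Proj(\calC)$ (the tensor product of the labels of $P$ with duals assigned to negative points; projectivity follows from $P$ being admissible and from $\Proj(\calC)$ being an ideal). Then I would exhibit $2$-morphisms $(D^2 \times I, T_{\mathrm{fus}}, 0) : (D^2,P,\{0\}) \Rightarrow \bbD^2_{(+,V)}$ and its reverse, given by blue graphs in $D^2 \times I$ consisting of a single coupon labeled by $\id_V$ that fuses the $k$ strands of $P$ into one strand labeled by $V$ (resp.\ splits one strand into $k$). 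By an admissible skein equivalence taking place in a ball disjoint from the projective label of $P$, the composite $\psi \circ \phi$ reduces to the identity graph $P \times I$, so $\bbD^2_{(+,V)}$ dominates $(D^2,P,\{0\})$ in $\bfA_\calC(\bbS^1)$.

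\textbf{Fullness.} Given a morphism $\alpha \in \bfA_\calC(\bbS^1)(\bbD^2_{(+,V)},\bbD^2_{(+,V')})$, Lemma~\ref{L:connection_lemma} applied to the non-empty connected cobordism $D^2 \times I$ provides a representative of the form $[D^2 \times I, T, 0]$ for some admissible bichrome graph $T$ from $P_{(+,V)}$ to $P_{(+,V')}$. Since $V$ and $V'$ are projective, the two vertical boundary strands of $T$ already witness admissibility, so I can freely perform red-to-blue operations and skein equivalences inside balls disjoint from one of these strands. Using red-to-blue I may assume $T$ is entirely blue; the resulting blue ribbon graph in $D^2 \times I$ is an endomorphism of $\calR_\calC$, and by a sequence of skein moves it can be absorbed into a single coupon labeled by some morphism $f \in \calC(V,V')$. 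This yields $\alpha = \rmF(f)$.

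\textbf{Faithfulness.} This is the main obstacle. Suppose $\sum_{i} \alpha_i \cdot [(\bbD^2 \times \bbI)_{f_i}] = 0$ in $\bfA_\calC(\bbS^1)(\bbD^2_{(+,V)},\bbD^2_{(+,V')})$; I want $\sum_i \alpha_i f_i = 0$. The plan is to apply Lemma~\ref{L:triviality_lemma} with $\Sigma' = \overline{D^2}$, $P' = \{(-,V')\}$ (admissible since $V' \in \Proj(\calC)$), $\lambda' = \{0\}$, and $M = M' = D^3$; this closes $\bbSigma_\bbGamma \cup_\Gamma \Sigma' = S^2$ and $\bbSigma''_\bbGamma \cup_\Gamma \Sigma' = S^2$ along the incoming and outgoing boundaries into a pair of $3$-balls. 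Testing the triviality condition against admissible bichrome graphs $T \subset D^3$ from $\varnothing$ to $P_{(+,V)} \cup P_{(-,V')}$ of the form ``cap with a single blue arc labeled by $V$ together with a blue coupon $h \in \calC(V,V')$,'' and with $T' \subset D^3$ a blue arc realizing $\id_{V'}$, the closed admissible decorated $3$-manifold obtained is $(S^3, L_{g,f_i})$ where $L_{g,f_i}$ is a closed blue graph computing $\rmt_V(g \circ f_i)$ for $g = h$ (any element of $\calC(V',V)$). The invariant $\rmL'_\calC$ evaluated on this is proportional (up to a fixed non-zero normalization from $\calD$ and $\delta$) to $\rmt_V\bigl(g \circ \sum_i \alpha_i f_i\bigr)$. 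Since Lemma~\ref{L:triviality_lemma} forces this to vanish for \emph{every} such choice of $g$, and since $t_V(\_ \circ \_) : \calC(V',V) \times \calC(V,V') \to \Bbbk$ is non-degenerate by \cite[Proposition~4.2]{GR17} (recalled in Section~\ref{S:modular_categories}), I conclude $\sum_i \alpha_i f_i = 0$. The delicate point will be bookkeeping the surgery normalization and verifying that the chosen admissible graphs genuinely produce the scalar $\rmt_V(g \circ f_i)$; this amounts to evaluating the Lyubashenko--Reshetikhin--Turaev functor on the resulting bichrome skein, and is the step that most directly uses the cutting-presentation definition of $F'_\intL$ from Section~\ref{S:3-manifold_invariant}.

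Having established that $\rmF$ is fully faithful and that $\rmF(\Proj(\calC))$ dominates $\bfA_\calC(\bbS^1)$, Proposition~\ref{P:Morita_equivalence} immediately yields that $\rmF$ is a c-equivalence.
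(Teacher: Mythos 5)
Your proposal is correct and follows essentially the same route as the paper: dominance via Lemma~\ref{L:Morita_reduction} followed by fusing marked points into a single $F_\calC(P)$-labeled one, fullness via Lemma~\ref{L:connection_lemma}, and faithfulness via non-degeneracy of the modified trace paired against an explicit closed $3$-ball cobordism. The one small overshoot is the invocation of Lemma~\ref{L:triviality_lemma} for faithfulness: since you only need the ``trivial $\Rightarrow$ all pairings vanish'' direction (which is the definition of the quotient $\bfA_\calC(\bbS^1)$), the full strength of the lemma is not required, and the paper instead directly exhibits a single testing $2$-morphism (built from an $f'$ with $\rmt_V(f'\circ f)\neq 0$) whose closed invariant is $\calD^{-1}\rmt_V(f'\circ f)\neq 0$.
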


\begin{proof}
 First of all, functoriality of $\rmF$ follows directly from an admissible skein equivalence. Then, the strategy is to use Proposition~\ref{P:Morita_equivalence} to show that $\rmF$ is a c-equivalence. First of all, thanks to Lemma~\ref{L:Morita_reduction}, the family of objects 
 \[
  \{ (D^2,P,\{ 0 \}) \in \bfA_\calC(\bbS^1) \mid P \in \adD(D^2) \}
 \]
 dominates the linear category $\bfA_\calC(\bbS^1)$. However, it is easy to see that, up to admissible skein equivalence, the identity of an object $(D^2,P,\{ 0 \})$ in this family factors through the $F_\calC(P)$-la\-beled disc $\bbD^2_{(+,F_\calC(P))}$, where $F_\calC(P) \in \Proj(\calC)$ denotes the tensor product of all the labels of $P$ adjusted for orientation, just like in the proof of Lemma~\ref{L:cutting}. This means that also the family of objects 
 \[
  \{ \bbD^2_{(+,V)} \in \bfA_\calC(\bbS^1) \mid V \in \Proj(\calC) \}
 \]
 dominates $\bfA_\calC(\bbS^1)$. Therefore, we simply need to show that $\rmF$ is faithful and full. In order to do so, let us fix objects $V,V' \in \Proj(\calC)$, and let us consider the linear map
 \[
  \rmF_{V,V'} : \calC(V,V') \to \bfA_\calC(\bbS^1) (\bbD^2_{(+,V)},\bbD^2_{(+,V')}).
 \] 
 For what concerns fullness, surjectivity of $\rmF_{V,V'}$ follows directly from Lemma~\ref{L:connection_lemma} using admissible skein equivalence. For what concerns faithfulness, injectivity of $\rmF_{V,V'}$ follows from the non-degeneracy of the modified trace $\rmt$ on $\Proj(\calC)$. Indeed, by definition, this means that for every non-trivial morphism $f \in \calC(V,V')$ there exists some morphism $f' \in \calC(V',V)$ satisfying $\rmt_V(f' \circ f) \neq 0$. In particular, this determines a $2$-mor\-phism $\overline{(\bbD^2 \times \bbI)_{f'}} : \overline{\bbD^2_{(-,V)}} \Rightarrow \overline{\bbD^2_{(-,V')}}$ of $\bfadCob_\calC$ given by
 \[
  (\overline{D^2 \times I},T^*_{f'},0 )
 \]
 for the blue graph $T^*_{f'} \subset \overline{D^2 \times I}$ from $P_{(-,V)}$ to $P_{(-,V')}$ of Figure~\ref{F:D_2_times_I_pairing}.

 \begin{figure}[hbt]\label{F:D_2_times_I_pairing}
  \centering
  \includegraphics{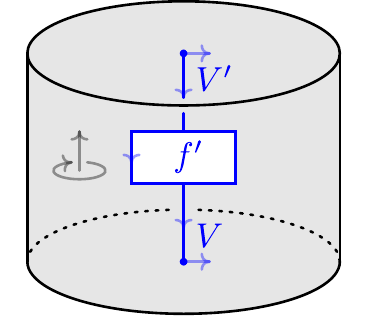}
  \caption{The $2$-morphism $\overline{(\bbD^2 \times \bbI)_{f'}}$ of $\bfadCob_\calC$.}
 \end{figure}

 Then we have
 \[
  \rmL'_\calC \left( \overline{\bbD^3_{\id_{V'}}} \ast \left( (\overline{\bbD^2 \times \bbI)_{f'}} \circ (\bbD^2 \times \bbI)_f \right) \ast \bbD^3_{\id_V} \right) = \calD^{-1} \rmt(f' \circ f) \neq 0,
 \]
 where the $2$-mor\-phisms
 \begin{align*}
  \bbD^3_{\id_V} : \id_\varnothing &\Rightarrow (\overline{\bbD^2_{(-,V)}} \circ \bbD^2_{(+,V)}) &
  \overline{\bbD^3_{\id_{V'}}} : (\overline{\bbD^2_{(-,V')}} \circ \bbD^2_{(+,V')}) &\Rightarrow \id_\varnothing
 \end{align*}
 of $\bfadCob_\calC$ are represented in Figure~\ref{F:D_3}, for $f = \id_V$, and in Figure~\ref{F:D_3_pairing}, for $f' = \id_{V'}$, respectively.
\end{proof}

\subsection{Disc functors}\label{S:discs}

We move on to describe linear functors induced by generating $1$-mor\-phisms of $\bfadCob_\calC$ under $\hat{\bfA}_\calC$ in terms of various structures supported by $\Proj(\calC)$. We start by focusing on discs, which are of two kinds. The first family of disc functors can be described in terms of projective objects of $\calC$. Indeed, for every object $V \in \Proj(\calC)$, we have a constant linear functor from $\Bbbk$ to $\bbProj(\calC)$, which we still denote $V$.

\begin{proposition}\label{P:unit}
 For every object $V \in \calC$ we have a commutative diagram of linear functors
 \begin{center}
 \begin{tikzpicture}[descr/.style={fill=white}]
  \node (P0) at ({atan(-0.5)}:{3.25/(2*sin(atan(0.5)))}) {$\bfA_\calC(\bbS^1)$};
  \node (P1) at (90:{3.25/2}) {$\bbProj(\calC)$};
  \node (P2) at ({atan(-0.5) + 2*90}:{3.25/(2*sin(atan(0.5)))}) {$\Bbbk$};
  \node (P3) at ({atan(0.5) + 2*90}:{3.25/(2*sin(atan(0.5)))}) {$\bfA_\calC(\varnothing)$};
  \draw
  (P1) edge[->] node[right,xshift=10pt] {$\rmF$} (P0)
  (P2) edge[->] node[above,yshift=5pt] {$V$} (P1)
  (P2) edge[->] node[left,xshift=-5pt] {$\bfepsilon$} (P3)
  (P3) edge[->] node[below,yshift=-5pt] {$\bfA_\calC \left( \bbD^2_{(+,V)} \right)$} (P0);
 \end{tikzpicture}
 \end{center}
\end{proposition}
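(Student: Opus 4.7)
The plan is to exploit the fact that both composites $\bfA_\calC(\bbD^2_{(+,V)}) \circ \bfepsilon$ and $\rmF \circ V$ are linear functors out of $\Bbbk$, the linear category with a single object $\ast$ and $\End(\ast) = \Bbbk$. By linearity it suffices to verify that the two composites agree on $\ast$ and on $\id_\ast$, so the whole statement reduces to two elementary checks.

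For the object $\ast$, the strategy is to unwind definitions. Along the lower path, $\bfepsilon(\ast) = \id_\varnothing$ and then $\bfA_\calC(\bbD^2_{(+,V)})(\id_\varnothing) = \bbD^2_{(+,V)} \circ \id_\varnothing$, which is strictly equal to $\bbD^2_{(+,V)}$ because $\id_\varnothing$ has empty underlying surface and gluing along $\varnothing$ is strictly unital. Along the upper path, $V(\ast) = V$ and $\rmF(V) = \bbD^2_{(+,V)}$ by the definition of $\rmF$. Equality on objects is thus immediate.

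For the morphism $\id_\ast$, the lower path produces $[\id_{\bbD^2_{(+,V)}} \circ \id_{\id_\varnothing}]$, which using the horizontal composition of $2$-morphisms together with the strict unitality at $\varnothing$ is represented by the identity cylinder $(D^2 \times I, P_{(+,V)} \times I, 0)$. The upper path produces $\rmF(\id_V) = [(D^2 \times I, T_{\id_V}, 0)]$, where $T_{\id_V}$ is the same blue strand $P_{(+,V)} \times I$ decorated with a single coupon labeled by $\id_V$. These two representatives are not literally equal, so the key observation I would invoke is that they are admissibly skein equivalent inside $D^2 \times I$: an identity-labeled blue coupon can be absorbed into its edge, and both local pictures have the same image under the Lyubashenko--Reshetikhin--Turaev functor $F_\intL$. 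Lemma~\ref{L:connection_lemma} (applied to the non-empty connected cobordism $D^2 \times I$) then ensures that they define the same morphism in $\bfA_\calC(\bbS^1)(\bbD^2_{(+,V)},\bbD^2_{(+,V)})$.

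The only conceptual obstacle is precisely this last step: one must not expect the two representatives to coincide strictly as $2$-morphisms of $\bfadCob_\calC$, but rather to agree after passing to the quotient by the annihilator, that is, via admissible skein equivalence. Once that subtlety is acknowledged, the commutative square follows by chasing definitions; no topological argument beyond the trivial skein move is required.
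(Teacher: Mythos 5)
Your proof is correct and follows the same route as the paper, which simply states that the claim ``obviously follows from the definitions''; you merely make the unwinding explicit. The one nontrivial observation you highlight---that the identity-labeled coupon is absorbed into its edge by an admissible skein equivalence---is exactly the functoriality of $\rmF$, which the paper records separately in the proof of Proposition~\ref{P:univ_lin_cat}, and which only requires the well-definedness of $\pi_M$ (established in the proof of Lemma~\ref{L:connection_lemma}) rather than its surjectivity.
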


\begin{proof}
 The claim obviously follows from the definitions.
\end{proof}

Next, we consider a family of disc functors which can be described in terms of vector spaces of morphism in $\calC$. For every object $V \in \calC$, not necessarily projective, let
\[
 \calC(V,\_) : \Proj(\calC) \rightarrow \Vect_\Bbbk
\]
be the linear functor sending every object $V'$ of $\Proj(\calC)$ to the vector space of morphisms $\calC(V,V')$, and every morphism $f'$ of $\calC(V',V'')$ to the linear map $\calC(V,f') : \calC(V,V') \rightarrow \calC(V,V'')$ sending every morphism $f$ of $\calC(V,V')$ to the morphism $f' \circ f$ of $\calC(V,V'')$. Next, let us consider the $3$-dimensional cobordism $D^3$ from $\varnothing$ to $D^2 \cup_{S^1} \overline{D^2}$. 

Then, for every object $V' \in \Proj(\calC)$, we consider the linear map
\[
 (\eta_V)_{V'} : \calC(V,V') \rightarrow \rmV_\calC \left( \overline{\bbD^2_{(-,V)}} \circ \bbD^2_{(+,V')} \right)
\]
sending every vector $f$ of $\calC(V,V')$ to the vector $[\bbD^3_f]$ of $\rmV_\calC(\overline{\bbD^2_{(-,V)}} \circ \bbD^2_{(+,V')})$, where the $2$-morphism $\bbD^3_f : \id_{\varnothing} \Rightarrow \overline{\bbD^2_{(-,V)}} \circ \bbD^2_{(+,V')}$ of $\bfadCob_\calC$ is given by
\[
 (D^3,T_f,0)
\]
for the blue graph $T_f \subset D^3$ from $\varnothing$ to $P_{(+,V')} \cup P_{(-,V)}$ of Figure~\ref{F:D_3}.

\begin{figure}[hbt]\label{F:D_3}
 \centering
 \includegraphics{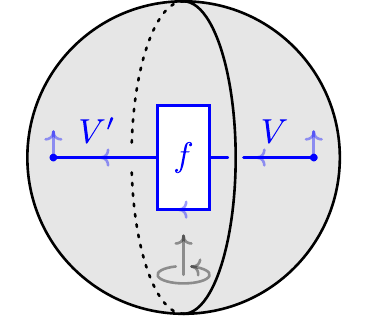}
 \caption{The $2$-morphism $\bbD^3_f$ of $\bfadCob_\calC$.}
\end{figure}

\newpage

\begin{proposition}\label{P:counit}
 For every object $V \in \calC$ the linear maps $(\eta_V)_{V'}$ define a natural isomorphism
 \begin{center}
  \begin{tikzpicture}[descr/.style={fill=white}]
   \node (P0) at ({atan(-0.5)}:{3.25/(2*sin(atan(0.5)))}) {$\bfA_\calC(\varnothing)$};
   \node (P1) at ({atan(0.5)}:{3.25/(2*sin(atan(0.5)))}) {$\Vect_\Bbbk$};
   \node (P2) at ({atan(-0.5) + 2*90}:{3.25/(2*sin(atan(0.5)))}) {$\Proj(\calC)$};
   \node (P3) at (3*90:{3.25/2}) {$\bfA_\calC(\bbS^1)$};
   \node[above] at (0,0) {$\Downarrow$};
   \node[below] at (0,0) {$\eta_V$};
   \draw
   (P0) edge[->] node[right,xshift=5pt] {$\rmV_\calC$} (P1)
   (P2) edge[->] node[above,yshift=5pt] {$\calC(V,\_)$} (P1)
   (P2) edge[->] node[left,xshift=-10pt] {$\rmF$} (P3)
   (P3) edge[->] node[below,yshift=-5pt] {$\bfA_\calC \left( \overline{\bbD^2_{(-,V)}} \right)$} (P0);
  \end{tikzpicture}
 \end{center}
\end{proposition}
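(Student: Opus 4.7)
The plan is to verify in order that (a) $(\eta_V)$ is a well-defined natural transformation, (b) each component $(\eta_V)_{V'}$ is surjective, and (c) each component $(\eta_V)_{V'}$ is injective. Well-definedness and linearity of $f \mapsto [\bbD^3_f]$ follow from an admissible skein equivalence in the coupon label, and admissibility is automatic since the label $V' \in \Proj(\calC)$ is projective. For naturality in $V'$, given $f' \in \calC(V',V'')$ and $g \in \calC(V,V')$, the action of the linear map $\rmV_\calC(\id_{\overline{\bbD^2_{(-,V)}}} \circ (\bbD^2 \times \bbI)_{f'})$ on $[\bbD^3_g]$ produces the class of $(\id_{\overline{\bbD^2_{(-,V)}}} \circ (\bbD^2 \times \bbI)_{f'}) \ast \bbD^3_g$, whose underlying $3$-manifold is again a $3$-ball containing two stacked coupons $g$ and $f'$ joined along a $V'$-strand; merging these coupons via an admissible skein move identifies the class with $[\bbD^3_{f' \circ g}] = (\eta_V)_{V''}(f' \circ g)$.

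For surjectivity of $(\eta_V)_{V'}$, any element of $\rmV_\calC(\overline{\bbD^2_{(-,V)}} \circ \bbD^2_{(+,V')})$ is represented by a linear combination of admissible decorated $3$-manifolds whose outgoing boundary is the sphere $\overline{D^2} \cup_{S^1} D^2 \cong S^2$ with marked points $V$ and $V'$. Applying the projective-trick-and-surgery argument that underlies Lemma~\ref{L:connection_lemma} to each term, I may assume the underlying $3$-manifold is $D^3$ and the signature defect is zero. It then suffices to show that every admissible bichrome graph in $D^3$ from $\varnothing$ to $P_{(-,V)} \cup P_{(+,V')}$ is admissibly skein equivalent to a single blue coupon labeled by some $f \in \calC(V,V')$: red components are absorbed into blue ones by the universal property of the coend $\coend$, and the resulting blue ribbon graph collapses under the Lyubashenko--Reshetikhin--Turaev calculus to a single coupon between the $V$- and $V'$-strands. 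This exhibits any class as $\sum_i \alpha_i [\bbD^3_{f_i}]$, establishing surjectivity.

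For injectivity, suppose $(\eta_V)_{V'}(f) = [\bbD^3_f] = 0$. For each $g \in \calC(V',V)$, one has a $2$-morphism $\overline{\bbD^2_{(-,V)}} \circ \bbD^2_{(+,V')} \Rightarrow \id_\varnothing$ of $\bfadCob_\calC$ represented by an oppositely oriented $3$-ball containing a single blue coupon labeled $g$ (the mirror construction of $\bbD^3_g$ with source and target swapped). Vanishing of $[\bbD^3_f]$ forces the $\rmL'_\calC$-value of the vertical composition of this $2$-morphism with $\bbD^3_f$ to be zero for every such $g$. The composition is $S^3$ decorated with a closed bichrome graph made of the two coupons $f$ and $g$ joined by a $V$-strand and a $V'$-strand; admissibility is provided by the projective label $V'$, and cutting along the $V'$-strand as a cutting presentation — exactly as in the calculation at the end of the proof of Proposition~\ref{P:univ_lin_cat} — gives the scalar $\calD^{-1}\,\rmt_{V'}(f \circ g)$. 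The non-degeneracy of the modified trace pairing $\rmt_{V'}(\_ \circ \_) : \calC(V,V') \times \calC(V',V) \to \Bbbk$ at the projective object $V'$ (recalled in Section~\ref{S:modular_categories} from \cite[Proposition~4.2]{GR17}) then forces $f = 0$. The main obstacle is surjectivity: reducing an arbitrary admissible bichrome graph in $D^3$ to a single coupon relies on the coend-theoretic absorption of red components and must be performed while keeping a projective label in view for admissibility, whereas injectivity reduces cleanly to the modified-trace computation already present in Proposition~\ref{P:univ_lin_cat}.
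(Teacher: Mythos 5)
Your proof follows the paper's argument step for step: naturality via an admissible skein equivalence merging coupons, surjectivity via Lemma~\ref{L:connection_lemma} (equivalently \cite[Proposition~4.11]{DGGPR19}) to reduce to a single coupon in $D^3$, and injectivity via the non-degeneracy of the modified-trace pairing, realized topologically by the cap $\overline{\bbD^3_{f'}}$ and the computation $\rmL'_\calC(\overline{\bbD^3_{f'}} \ast \bbD^3_f) = \calD^{-1}\rmt_{V'}(f\circ f')$. Your version is in fact slightly more careful than the paper's in one place: you evaluate the modified trace at the projective object $V'$, writing $\rmt_{V'}(f\circ g)$, whereas the paper's $\rmt_V(f'\circ f)$ is only sensible after appealing to cyclicity or is a minor abuse when $V\notin\Proj(\calC)$.
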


\begin{proof}
 The result is established by showing that the linear maps
 \[
  (\eta_V)_{V'} : \calC(V,V') \rightarrow \rmV_\calC \left( \overline{\bbD^2_{(-,V)}} \circ \bbD^2_{(+,V')} \right)
 \]
 are natural with respect to morphisms $f' \in \calC(V',V'')$, and that they are invertible for all objects $V' \in \Proj(\calC)$. In order to show naturality, we need to check that
 \[
  \rmV_\calC \left( \left[ \id_{\overline{\bbD^2_{(-,V)}}} \circ 
  (\bbD^2 \times \bbI)_{f'} \right] \right)
  \left( \left[ \bbD^3_f \right] \right) \\
  = \left[ \bbD^3_{f' \circ f} \right]
 \]
 for every morphism $f \in \calC(V,V')$ and every morphism $f' \in \calC(V',V'')$. This follows directly from an admissible skein equivalence, exactly like the functoriality of $\rmF$ in the proof of Proposition~\ref{P:univ_lin_cat}. Next, for what concerns surjectivity of $(\eta_V)_{V'}$, we know from \cite[Proposition~4.11]{DGGPR19}, or equivalently from Lemma~\ref{L:connection_lemma}, that every vector of $\rmV_\calC(\overline{\bbD^2_{(-,V)}} \circ \bbD^2_{(+,V')})$ is the image of an admissible bichrome graph in $\adSk(D^3;\varnothing,P_{(+,V')} \cup P_{(-,V)})$. Furthermore, up to isotopy and admissible skein equivalence, we can always restrict to admissible bichrome graphs of the form $T_f$, as in the definition of $\bbD^3_f$. For what concerns injectivity of $(\eta_V)_{V'}$, we use the non-de\-gen\-er\-a\-cy of the modified trace $\rmt$: indeed, if we consider some non-trivial morphism $f \in \calC(V,V')$, then, since $\rmt$ is non-degenerate, there exists a morphism $f' \in \calC(V',V)$ satisfying $\rmt_V(f' \circ f) \neq 0$.
In particular, this determines the $2$-morphism $\overline{\bbD^3_{f'}} : \overline{\bbD^2_{(-,V)}} \circ \bbD^2_{(+,V')} \Rightarrow \id_{\varnothing}$ of $\bfadCob_\calC$ given by 
 \[
  (\overline{D^3},T^*_{f'},0)
 \]
 for the blue graph $T^*_{f'} \subset \overline{D^3}$ from $P_{(+,V')} \cup P_{(-,V)}$ to $\varnothing$ of Figure~\ref{F:D_3_pairing}.

\begin{figure}[hbt]\label{F:D_3_pairing}
 \centering
 \includegraphics{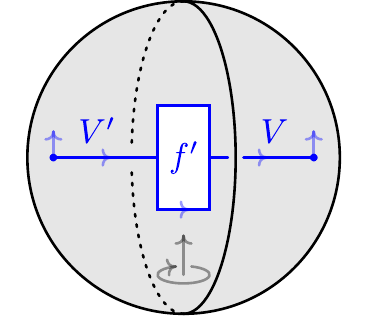}
 \caption{The $2$-morphism $\overline{\bbD^3_{f'}}$ of $\bfadCob_\calC$.}
\end{figure}

  This implies
 \[
  \rmL'_\calC \left( \overline{\bbD^3_{f'}} \ast \bbD^3_f \right) = \calD^{-1} \rmt(f' \circ f) \neq 0. \qedhere
 \]
\end{proof}

\subsection{Pant functors}\label{S:2-pants}

We finish by discussing a new family of $1$-morphisms of $\bfadCob_\calC$. We define the \textit{pant cobordism $P^2$} as the $2$-dimensional cobordism from $S^1 \sqcup S^1$ to $S^1$ given by $D^2$ minus two open balls of radius $\frac{1}{4}$ and center $(-\frac{1}{2},0)$ and $(+\frac{1}{2},0)$ respectively, with incoming and outgoing horizontal boundary identifications induced by $\id_{S^1}$ through rescaling and translation.

\begin{definition}\label{D:pant}
 The \textit{pant} $\bbP^2 : \bbS^1 \disjun \bbS^1 \rightarrow \bbS^1$ is the $1$-mor\-phism of $\bfadCob_\calC$ given by 
 \[
  \left( P^2,\varnothing,H_1(P^2;\R) \right).
 \]
 Its adjoint $1$-mor\-phism $\overline{\bbP^2} : \bbS^1 \rightarrow \bbS^1 \disjun \bbS^1$ in $\bfadCob_\calC$ is given by 
 \[
  \left( \overline{P^2},\varnothing,H_1(P^2;\R) \right).
 \]
\end{definition}

The first pant functor we are going to consider can be described in terms of the tensor product $\otimes$ on $\calC$. Let
\[
 \pant : \Proj(\calC) \sqtimes \Proj(\calC) \rightarrow \Proj(\calC)
\]
be the linear functor sending every object $(V,V')$ of 
$\Proj(\calC) \sqtimes \Proj(\calC)$ to the object
$V \otimes V'$ of $\Proj(\calC)$, and every morphism
$f \sqtimes f'$ of $\calC(V,V'') \sqtimes \calC(V',V''')$ to the morphism $f \otimes f'$ of $\calC(V \otimes V',V'' \otimes V''')$. 

Next, for every object $(V,V') \in \Proj(\calC) \sqtimes \Proj(\calC)$, we consider the morphism
\[
 \eta_{(V,V')} \in \bfA_\calC(\bbS^1) \left( \bbD^2_{(+,V \otimes V')},\bbP^2 \circ \left( \bbD^2_{(+,V)} \disjun \bbD^2_{(+,V')} \right) \right)
\]
given by $[(\bbD^2 \times \bbI)_{T_{V,V'}}]$, where the $2$-morphism of $\bfadCob_\calC$
\[
 (\bbD^2 \times \bbI)_{T_{(V,V')}} : \bbD^2_{(+,V \otimes V')} \Rightarrow \bbP^2 \circ \left( \bbD^2_{(+,V)} \disjun \bbD^2_{(+,V')} \right)
\]
is given by $(D^2 \times I,T_{(V,V')},0)$ for the blue graph $T_{(V,V')} \subset D^2 \times I$ from $P_{(+,V \otimes V')}$ to $P_{(+,V)} \cup P_{(+,V')}$ of Figure~\ref{F:D_2_times_I_P_2}.

\begin{figure}[hbt]\label{F:D_2_times_I_P_2}
 \centering
 \includegraphics{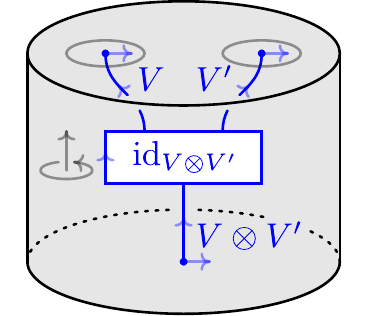}
 \caption{The $2$-morphism $(\bbD^2 \times \bbI)_{T_{(V,V')}}$ of $\bfadCob_\calC$.}
\end{figure}

\begin{proposition}\label{P:product}
 The morphisms $\eta_{(V,V')}$ define a natural isomorphism
 \begin{center}
  \begin{tikzpicture}[descr/.style={fill=white}]
   \node (P0) at ({atan(-0.5)}:{3.25/(2*sin(atan(0.5)))}) {$\bfA_\calC(\bbS^1)$};
   \node (P1) at (90:{3.25/2}) {$\Proj(\calC)$};
   \node (P2) at ({atan(-0.5) + 2*90}:{3.25/(2*sin(atan(0.5)))}) {$\Proj(\calC) \sqtimes \Proj(\calC)$};
   \node (P3) at (3*90:{3.25/2}) {$\bfA_\calC(\bbS^1 \disjun \bbS^1)$};
   \node[above] at (0,0) {$\Downarrow$};
   \node[below] at (0,0) {$\eta$};
   \draw
   (P1) edge[->] node[right,xshift=10pt] {$\rmF$} (P0)
   (P2) edge[->] node[above,yshift=5pt] {$\pant$} (P1)
   (P2) edge[->] node[left,xshift=-10pt] {$\bfmu_{\bbS^1,\bbS^1} \circ 
   (\rmF \sqtimes \rmF)$} (P3)
   (P3) edge[->] node[below,yshift=-5pt] {$\bfA_\calC \left( \bbP^2 \right)$} (P0);
  \end{tikzpicture}
 \end{center}
\end{proposition}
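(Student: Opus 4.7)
The plan is to mirror the proof of Proposition~\ref{P:counit}, separating the argument into (a) naturality of the family $\eta$ and (b) invertibility of each component $\eta_{(V,V')}$. Well-definedness of the assignment as morphisms of $\bfA_\calC(\bbS^1)$ is automatic from the fact that all representatives involved are admissible, since $V, V' \in \Proj(\calC)$ and $\Proj(\calC)$ is an ideal, so $V \otimes V'$ is again projective.

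For naturality, I would take morphisms $f \in \calC(V,V'')$ and $f' \in \calC(V',V''')$ and check directly that
\[
 \bigl[ \id_{\bbP^2} \circ \bigl( (\bbD^2 \times \bbI)_f \disjun (\bbD^2 \times \bbI)_{f'} \bigr) \bigr] \ast \eta_{(V,V')} = \eta_{(V'',V''')} \ast \rmF(f \otimes f')
\]
inside $\bfA_\calC(\bbS^1)\bigl( \bbD^2_{(+,V \otimes V')}, \bbP^2 \circ (\bbD^2_{(+,V'')} \disjun \bbD^2_{(+,V''')}) \bigr)$. Both sides are represented by stacked copies of $D^2 \times I$ carrying a blue graph that consists of the trivalent splitting graph $T_{(V,V')}$ or $T_{(V'',V''')}$ together with a coupon labeled by either $f \otimes f'$ or $f \sqcup f'$; sliding the coupon past the trivalent vertex is a local skein move in $\calR_\intL$ taking place inside a ball whose complement still contains the projective strands $V''$ and $V'''$, hence an admissible skein equivalence. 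This matches the strategy used for the analogous naturality check in the proof of Proposition~\ref{P:counit}.

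For invertibility, I would construct an explicit candidate inverse
\[
 \eta^{-1}_{(V,V')} := \bigl[ \overline{(\bbD^2 \times \bbI)_{T_{(V,V')}}} \bigr] \in \bfA_\calC(\bbS^1)\bigl( \bbP^2 \circ (\bbD^2_{(+,V)} \disjun \bbD^2_{(+,V')}), \bbD^2_{(+,V \otimes V')} \bigr)
\]
given by the 2-morphism of $\bfadCob_\calC$ whose underlying bichrome graph is the vertical mirror of $T_{(V,V')}$, i.e.\ a trivalent graph that merges blue strands colored $V$ and $V'$ into a single blue strand colored $V \otimes V'$. Stacking $\eta_{(V,V')}$ with $\eta^{-1}_{(V,V')}$ in either order produces a cylindrical bichrome graph whose only non-trivial portion is a split-then-merge (or merge-then-split) configuration contained in a ball; under the Lyubashenko--Reshetikhin--Turaev functor $F_\intL$, this local configuration evaluates to $\id_{V \otimes V'}$ (respectively $\id_V \otimes \id_{V'}$), since both trivalent vertices represent the canonical coherence identifying the two readings of the $V \otimes V'$ strand. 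The complement of the ball retains the projective colors $V$, $V'$, so the local skein replacement is an admissible skein equivalence, and both composites therefore represent the corresponding identity 2-morphisms.

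The main obstacle is the careful bookkeeping of the last step: one must verify that the specific trivalent splitting and merging appearing in Figure~\ref{F:D_2_times_I_P_2} are indeed sent by $F_\intL$ to the evident identification of $V \otimes V'$ with the parallel pair $V$, $V'$, so that their composite in either order genuinely collapses to an identity inside a ball contained in the stacked cylinder. Once this local identity is in hand, admissibility of the surrounding bichrome graph is transparent from the presence of $V, V' \in \Proj(\calC)$, and both required equalities $\eta^{-1}_{(V,V')} \ast \eta_{(V,V')} = \id$ and $\eta_{(V,V')} \ast \eta^{-1}_{(V,V')} = \id$ follow, completing the proof that $\eta$ is a natural isomorphism.
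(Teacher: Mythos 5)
Your proposal is correct and follows essentially the same route as the paper's proof: you split the argument into naturality and invertibility, verify naturality by an admissible skein equivalence (coupon slides past the trivalent vertex), and exhibit the same explicit inverse — the merging trivalent coupon in $D^2 \times I$, which the paper denotes $(\bbD^2 \times \bbI)_{T_{(V,V')}^{-1}}$ — with both composites collapsing to identities by an admissible skein equivalence. The only cosmetic difference is your overline notation for the inverse $2$-morphism versus the paper's superscript $-1$; the content is identical.
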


\begin{proof}
 The result is established by showing that the morphisms
 \[
  \eta_{(V,V')} \in \bfA_\calC(\bbS^1) \left( \bbD^2_{(+,V \otimes V')},\bbP^2 \circ \left( \bbD^2_{(+,V)} \disjun \bbD^2_{(+,V')} \right) \right)
 \]
 are natural with respect to morphisms $f \in \calC(V,V'')$ and $f' \in \calC(V',V''')$, and that they are invertible for all objects $V \in \Proj(\calC)$ and $V' \in \Proj(\calC)$. For what concerns naturality, we need to show that
 \begin{align*}
  &\left( \bfA_\calC(\bbP^2) \left( \left[ (\bbD^2 \times \bbI)_f \disjun{} (\bbD^2 \times \bbI)_{f'} \right] \right) \right) \circ \left[ (\bbD^2 \times \bbI)_{T_{(V,V')}} \right] \\
  &\hspace{\parindent} = \left[ (\bbD^2 \times \bbI)_{T_{(V'',V''')}} \right] \circ \left[ (\bbD^2 \times \bbI)_{f \otimes f'} \right]
 \end{align*}
 for every morphism $f \sqtimes f' \in \calC(V,V'') \sqtimes \calC(V',V''')$. This follows directly from an admissible skein equivalence. For what concerns invertibility of $\eta_{(V,V')}$, the inverse of $\eta_{(V,V')}$ is given by $[(\bbD^2 \times \bbI)_{T_{V,V'}^{-1}}]$, where the $2$-morphism of $\bfadCob_\calC$
\[
 (\bbD^2 \times \bbI)_{T_{V,V'}^{-1}} : \bbP^2 \circ \left( \bbD^2_{(+,V)} \disjun \bbD^2_{(+,V')} \right) \Rightarrow \bbD^2_{(+,V \otimes V')}
\]
is simply given by $(D^2 \times I,T_{(V,V')}^{-1},0)$ for the blue graph $T_{(V,V')}^{-1} \subset D^2 \times I$ from $P_{(+,V)} \cup P_{(+,V')}$ to $P_{(+,V \otimes V')}$ of Figure~\ref{F:D_2_times_I_P_2_inverse}.
\end{proof}

\begin{figure}[hbt]\label{F:D_2_times_I_P_2_inverse}
 \centering
 \includegraphics{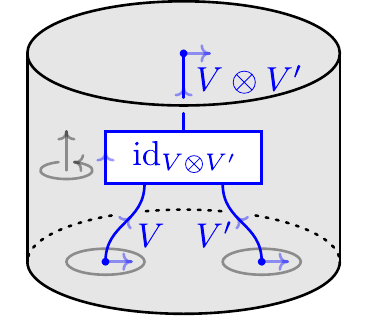}
 \caption{The $2$-morphism $(\bbD^2 \times \bbI)_{T_{(V,V')}^{-1}}$ of $\bfadCob_\calC$.}
\end{figure}

We move on to study the adjoint pant functor, which can be described in terms of the coend $\coend$ in $\calC$. In order to do this, we first need some preparation, so let us start by fixing the projective generator $G$ of $\calC$ considered in Section~\ref{S:monoidality}. For all objects $X \in \calC$ and $V \in \Proj(\calC)$ we denote with $i_{X,V} \in \calC(X^* \otimes X \otimes V,G^* \otimes G \otimes V)$ the morphism defined by
\[
 i_{X,V} := \zeta^{-1} \cdot F_\Lambda \left( \pic{projective_coend} \right)
\]
for the non-zero coefficient $\zeta = \calD^2 = \Delta_+ \Delta_- \in \Bbbk$ fixed in Section~\ref{S:3-manifold_invariant}, and for the pair of morphisms $f_\Lambda \in \calC(P_{\one},G^* \otimes G)$ and $s_V \in \calC(V,P_{\one} \otimes V)$ satisfying 
\begin{align*}
 i_G \circ f_\Lambda &= \Lambda \circ \epsilon_{\one}, 
 &
 (\epsilon_{\one} \otimes \id_V) \circ s_V &= \id_V,
\end{align*}
whose existence is showed in \cite[Lemma~4.5]{DGGPR19}.

\begin{proposition}\label{P:uccidetemi}
 The morphism 
 \[
  \ell_V := i_{G,V} \in \End_\calC(G^* \otimes G \otimes V)
 \]
 is idempotent, and the object
 \[
  \im \ell_V \in \Proj(\calC),
 \]
 together with the dinatural family of morphisms 
 \[
  \{ i_{X,V} : X^* \otimes X \otimes V \to \im \ell_V \mid X \in \calC \},
 \]
 satisfies
 \[
  \im \ell_V = \int^{X \in \calC} X^* \otimes X \otimes V,
 \]
 which means it is isomorphic to $\coend \otimes V$, which is also the coend of the functor
 \begin{align*}
  (\_^* \otimes \_ \otimes V) : \calC^\op \times \calC & \to \Proj(\calC) \\*
  (X,Y) & \mapsto X^* \otimes Y \otimes V.
 \end{align*}
 Furthermore, $\ell_V$ belongs to the image of the functor 
 \[
  \pant : \Proj(\calC) \sqtimes \Proj(\calC) \to \Proj(\calC),
 \]
 which means there exists a morphism
 \[
  \myuline{\ell_V} = \sum_{i=1}^n \ell'_i \sqtimes \ell''_{V,i} \in \End_\calC(G^*) \sqtimes \End_\calC(G \otimes V)
 \]
 satisfying
 \[
  \ell_V = \sum_{i=1}^n \ell'_i \otimes \ell''_{V,i} \in \End_\calC(G^* \otimes G \otimes V).
 \]
\end{proposition}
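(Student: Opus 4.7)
The proof has three parts, each verified by admissible skein equivalence on the bichrome graph defining $i_{X,V}$, combined with the relations
\[
 i_G \circ f_\intL = \intL \circ \epsilon_{\one},
 \qquad
 (\epsilon_{\one} \otimes \id_V) \circ s_V = \id_V
\]
of \cite[Lemma~4.5]{DGGPR19} and the integral identity $\mu \circ (\intL \otimes \id_{\coend}) = \intL \circ \epsilon$.

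For idempotence, I would stack two copies of the defining graph of $i_{G,V}$ and observe that the two consecutive $f_\intL$-coupons, together with the intermediate $\epsilon_{\one}$-factor produced by $s_V$, realise a configuration that collapses, via the integral identity, to a single $f_\intL$-coupon multiplied by the stabilization scalar $\zeta$ coming from the redundant red component. Combined with the prefactor $\zeta^{-2}$ of the doubled graph, this produces exactly $\zeta^{-1}$ times a single copy of the defining graph, namely $\ell_V$ itself, giving $\ell_V \circ \ell_V = \ell_V$.

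For the coend identification, dinaturality of the family $\{i_{X,V}\}_{X \in \calC}$ in $X$ reduces to the dinaturality of the coend structure maps $\{i_X : X^* \otimes X \to \coend\}_X$, since $f_\intL$ and $s_V$ do not depend on $X$. For the universal property, I would use that $V$ is rigid-dualizable, so $\_ \otimes V$ is a left adjoint and therefore preserves coends; hence $\int^{X \in \calC} X^* \otimes X \otimes V$ exists in $\calC$ and is canonically isomorphic to $\coend \otimes V$, which lies in the full subcategory $\Proj(\calC)$ because $\Proj(\calC)$ is an ideal. The identification of this object with $\im \ell_V$ follows by comparing the two dinatural families $\{i_{X,V}\}$ and $\{i_X \otimes \id_V\}$: idempotence of $\ell_V$ makes the first factor through $\im \ell_V$, and the universal property of $\coend \otimes V$ produces the inverse map, with a short skein calculation verifying that these two maps compose to identities on either side.

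For the factorization through $\pant$, the task is to exhibit $\ell_V$ as a finite sum $\sum_i \ell'_i \otimes \ell''_{V,i}$ with $\ell'_i \in \End_\calC(G^*)$ and $\ell''_{V,i} \in \End_\calC(G \otimes V)$. I would inspect the bichrome graph defining $i_{G,V}$ and note that the $G^*$-strand on the output is coupled to the other strands only through the coupon labeled by $f_\intL$ and through the closure of the input pair $G^* \otimes G$. Expanding $f_\intL$ in a finite basis of $\calC(P_{\one}, G^* \otimes G)$ induced by the decomposition $G = \bigoplus_{W \in \rmI(\calC)} P_W$, and then using rigidity of $\calC$ to pull the $G^*$-strand out of the common diagrammatic region, splits the graph into a sum of sub-diagrams, each of which factors as a product of an endomorphism of $G^*$ and an endomorphism of $G \otimes V$. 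The hardest step is handling the red component encoding $\intL$ via $F_\intL$: here I would first apply a red-to-blue operation, which is legitimate thanks to the admissibility of the graph ensured by $V \in \Proj(\calC)$, converting the red loop into a blue coupon labeled by $\coend$ that can be absorbed entirely into the endomorphism of $G \otimes V$, so that the separation into a tensor of two endomorphisms is preserved.
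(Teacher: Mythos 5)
Your outline covers all three parts, but there are genuine gaps in parts one and three, and the second relies on a ``short skein calculation'' that conceals the key technical step.

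First, idempotence alone is not enough. To make sense of $\{i_{X,V} : X^* \otimes X \otimes V \to \im\ell_V\}$ as a family of morphisms landing in $\im\ell_V$, and hence to run any comparison with $\coend \otimes V$, you must know that $\ell_V \circ i_{X,V} = i_{X,V}$ for \emph{every} $X \in \calC$, not only for $X = G$. Your stacked-graph argument only addresses the $X = G$ case (idempotence), and the claim that ``idempotence of $\ell_V$ makes the first [family] factor through $\im\ell_V$'' is false: idempotence of a retraction does not imply that an arbitrary morphism into the ambient object is absorbed by it. The paper's proof opens precisely by establishing the general absorption identity $i_{G,V} \circ i_{X,V} = i_{X,V}$ via a four-step skein chain (using \cite[Lemma~4.5, Proposition~3.7, Lemma~2.7]{DGGPR19}), and everything else is derived as a consequence. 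This is the step you cannot skip. Your appeal to the integral identity $\mu \circ (\intL \otimes \id_\coend) = \intL \circ \epsilon$ is the right ingredient, but it needs to be deployed in the general case. For the universal property, the alternative route via preservation of coends by $\_ \otimes V$ is legitimate, and the paper also reduces to the coend $\coend \otimes V$ with structure maps $i_X \otimes \id_V$; but the ``short skein calculation'' you defer to is where the actual work lives --- the paper's verification invokes the $\calS$-transformation and its inverse (via $\calS \circ \calS^{-1} \circ i_X = i_X$) to check that $\eta_G$ provides the unique factorizing morphism. Without spelling this out, the argument is not complete.

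Second, the factorization through $\pant$ does not work as you describe it. Expanding $f_\Lambda$ in a basis of $\calC(P_{\one}, G^* \otimes G)$ replaces one morphism $P_{\one} \to G^* \otimes G$ with a linear combination of others of the same type; each basis vector still has target $G^* \otimes G$ and still couples the $G^*$-output to the $G$-output, so nothing is separated. The appeal to ``rigidity of $\calC$ to pull the $G^*$-strand out'' is not an operation that is available here --- rigidity provides evaluation/coevaluation, but the strand is topologically linked through the (red or blue-converted) loop and the closure of the input pair, and those links do not dissolve by duality alone. What is actually needed is the insertion of the identity of the object $G \otimes G^*$ factored through the projective generator, $\id_{G \otimes G^*} = \sum_{i} f'_i \circ f_i$ with $f_i \in \calC(G \otimes G^*, G)$ and $f'_i \in \calC(G, G \otimes G^*)$, which is the trick the paper uses; passing through a single copy of $G$ genuinely cuts the diagram, and then \cite[Lemma~4.4]{DGGPR19} finishes the skein computation. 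The red-to-blue conversion you mention is admissible and useful, but by itself it only recolors the loop --- it does not detach it from either factor.
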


The proof of Proposition~\ref{P:uccidetemi} is postponed to Appendix~\ref{A:projective_coend}. Now let
\[
 \pantbar : \Proj(\calC) \rightarrow \Proj(\calC) \sqtimes \Proj(\calC)
\]
be the linear functor sending every object $V$ of $\Proj(\calC)$ to the object
\[
 \myuline{\coend \otimes V} := \im \myuline{\ell_V} = \im \left( \sum_{i=1}^n \ell'_i \sqtimes \ell''_{V,i} \right)
\]
of $\Proj(\calC) \sqtimes \Proj(\calC)$ and every morphism $f$ of $\calC(V,V')$ to the morphism
\[
 \myuline{\ell_{V'}} \circ \left( \id_{G^*} \sqtimes \left( \id_G \otimes f \right) \right) \circ \myuline{\ell_V} = \sum_{i,j=1}^n \left( \ell'_j \circ \ell'_i \right) \sqtimes \left( \ell''_{V',j} \circ f \circ \ell''_{V,i} \right)
\]
of
\[
 \left( \calC \sqtimes \calC \right) \left( \myuline{\coend \otimes V}, \myuline{\coend \otimes V'} \right).
\]

Next, we consider a $3$-di\-men\-sion\-al cobordism with corners $W$ from $D^2 \sqcup D^2$ to $D^2 \cup_{S^1} \overline{P^2}$ given by $I \times D^2 \subset \R^3$ minus two open balls of radius $\frac{1}{4}$ and centers $(0,0,0)$ and $(1,0,0)$. Then, for every object $V \in \Proj(\calC)$, we consider the morphism
\[
 \eta_V \in \bfA_\calC(\bbS^1 \disjun \bbS^1) \left( \im \left( \sum_{i=1}^n \left[ \left( \bbD^2 \times \bbI \right)_{\ell'_i} \disjun \left(\bbD^2 \times \bbI \right)_{\ell''_{V,i}} \right] \right), \overline{\bbP^2} \circ \bbD^2_{(+,V)} \right)
\]
given by $[ \bbW_V ]$, where the $2$-morphism of $\bfadCob_\calC$
\[
 \bbW_V : \bbD^2_{(+,G^*)} \disjun \bbD^2_{(+,G \otimes V)} \Rightarrow \overline{\bbP^2} \circ \bbD^2_{(+,V)}
\]
is given by $(W,T_V,0)$ for the blue graph $T_V \subset W$ from $P_{(+,G^*)} \disjun P_{(+,G \otimes V)}$ to $P_{(+,V)}$ represented in Figure~\ref{F:I_times_D_2}.

\begin{figure}[hbt]\label{F:I_times_D_2}
 \centering
 \includegraphics{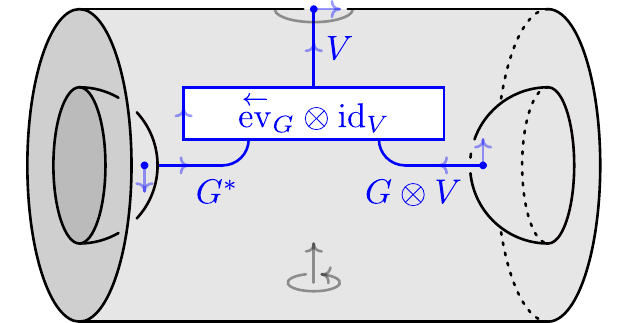}
 \caption{The $2$-morphism $\bbW_V$ of $\bfadCob_\calC$.} 
\end{figure}

\newpage

\begin{proposition}\label{P:coproduct_critical}
 The morphisms $\eta_V$ define a natural isomorphism
 \begin{center}
  \begin{tikzpicture}[descr/.style={fill=white}]
   \node (P0) at ({atan(-0.5)}:{3.25/(2*sin(atan(0.5)))}) {$\bfA_\calC(\bbS^1 \disjun \bbS^1)$};
   \node (P1) at (90:{3.25/2}) {$\Proj(\calC) \sqtimes \Proj(\calC)$};
   \node (P2) at ({atan(-0.5) + 2*90}:{3.25/(2*sin(atan(0.5)))}) {$\Proj(\calC)$};
   \node (P3) at (3*90:{3.25/2}) {$\bfA_\calC(\bbS^1)$};
   \node[above] at (0,0) {$\Downarrow$};
   \node[below] at (0,0) {$\eta$};
   \draw
   (P1) edge[->] node[right,xshift=10pt] {$\bfmu_{\bbS^1,\bbS^1} \circ (\rmF \sqtimes \rmF)$} (P0)
   (P2) edge[->] node[above,yshift=5pt] {$\pantbar$} (P1)
   (P2) edge[->] node[left,xshift=-10pt] {$\rmF$} (P3)
   (P3) edge[->] node[below,yshift=-5pt] {$\bfA_\calC \left( \overline{\bbP^2} \right)$} (P0);
  \end{tikzpicture}
 \end{center}
\end{proposition}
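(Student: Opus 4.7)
The strategy parallels the approach used for Proposition~\ref{P:product}: we verify naturality by an admissible skein manipulation and then establish invertibility by exhibiting an explicit inverse, reducing the check of the two composition identities to scalar equalities via Lemmas~\ref{L:triviality_lemma} and~\ref{L:connection_lemma_prime}, and invoking the coend universal property encoded in Proposition~\ref{P:uccidetemi}. For naturality, the required identity follows by sliding the coupon labeled by $f \in \calC(V,V')$ along the $V$-colored blue edge of $T_V$ through the pant region of $W$, which is an admissible skein equivalence; after this rearrangement, both sides are represented by the same bichrome graph modulo absorption of the idempotent $\myuline{\ell_V}$.

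For invertibility, I would introduce a candidate inverse
\[
 \eta_V^{-1} := [\overline{\bbW_V}] \in \bfA_\calC(\bbS^1 \disjun \bbS^1) \left( \overline{\bbP^2} \circ \bbD^2_{(+,V)}, \im \myuline{\ell_V} \right),
\]
where $\overline{\bbW_V}$ is obtained from $\bbW_V$ by orientation reversal, decorated so that gluing $\overline{W}$ on top of $W$ along $P^2$ reproduces, up to admissible skein equivalence, precisely the two blue tubes realising $\myuline{\ell_V}$ as an endomorphism of $\bbD^2_{(+,G^*)} \disjun \bbD^2_{(+,G \otimes V)}$. The identity $\eta_V^{-1} \circ \eta_V = \myuline{\ell_V}$ then follows directly from this gluing, since $\myuline{\ell_V}$ is the identity of $\im \myuline{\ell_V}$ in the Karoubi envelope.

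The main obstacle is the reverse identity $\eta_V \circ \eta_V^{-1} = \id_{\overline{\bbP^2} \circ \bbD^2_{(+,V)}}$. Using Lemma~\ref{L:triviality_lemma} one reduces the check to a numerical equality for $\rmL'_\calC$ against pairings with strongly admissible morphisms of $\bfA'_\calC(\bbS^1 \disjun \bbS^1)$; Lemma~\ref{L:connection_lemma_prime} then represents these pairings by admissible bichrome graphs in a connected filling, after which the question collapses to an equality of $F_\intL$ on closed bichrome graphs in $S^3$. The point is that closing $\overline{W}$ on top of $W$ along $\bbD^2_{(+,V)}$ creates a handle whose surgery contribution is $\calD^{-2}=\zeta^{-1}$, exactly matching the normalisation used in the definition of $i_{G,V}$; the defining relations $i_G \circ f_\intL = \intL \circ \epsilon_{\one}$ and $(\epsilon_{\one} \otimes \id_V) \circ s_V = \id_V$ then collapse the closed graph back to the original, yielding the identity.

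The hardest aspect of the argument is the scalar bookkeeping involving $\calD$, $\delta$, and $\zeta$, together with the Maslov corrections to signature defects arising in vertical gluing of $2$-morphisms; once these coefficients are aligned, the topological core is an immediate consequence of the coend description of $\im \ell_V$ from Proposition~\ref{P:uccidetemi} and the universal property characterising $\coend \otimes V$.
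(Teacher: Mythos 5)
Your outline of the argument (naturality by admissible skein equivalence, invertibility by pairing $\bbW_V$ with its mirror $\overline{\bbW_V}$, and reliance on Proposition~\ref{P:uccidetemi} for the Karoubi bookkeeping) is the right strategy and matches the paper. However, your candidate inverse is wrong by a scalar, and this is not a cosmetic issue. You set $\eta_V^{-1} := [\overline{\bbW_V}]$. Since $\ell_V = i_{G,V}$ is \emph{defined} with a factor of $\zeta^{-1}$ in front of the image under $F_\intL$, the bichrome graph obtained by vertically gluing $\overline{\bbW_V}$ onto $\bbW_V$ realizes $\zeta \cdot \ell_V$, not $\ell_V$. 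Hence $[\overline{\bbW_V}] \ast [\bbW_V] = \zeta \cdot \myuline{\ell_V}$, and your claim that this composition ``follows directly from this gluing'' to equal $\myuline{\ell_V}$ is off by $\zeta$. The correct inverse is $\zeta^{-1} \cdot [\overline{\bbW_V}]$, as the paper shows using Lemma~\ref{L:cutting} together with Proposition~\ref{P:uccidetemi}.

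Your treatment of the other composition $\eta_V \circ \eta_V^{-1}$ has a more serious problem: you assert that a surgery contribution of $\calD^{-2} = \zeta^{-1}$ appears on \emph{this} side and cancels the normalization, but not on the $\eta_V^{-1} \circ \eta_V$ side. That is inconsistent: if a scalar discrepancy arises on only one of the two composites, then at most one of them can be the identity, and the proposed $\eta_V^{-1}$ cannot be a two-sided inverse. The paper avoids this confusion by keeping the factor $\zeta^{-1}$ in the inverse itself, then geometrically identifying $[\bbW_V] \ast [\overline{\bbW_V}]$ with the $2$-morphism $(\bbI \times \bbS^1 \times \bbI)_V$, which carries an extra red surgery component; \cite[Lemma~4.4]{DGGPR19} removes that component at the cost of a factor of $\zeta$, so that $\zeta^{-1} \cdot [\bbW_V] \ast [\overline{\bbW_V}] = [\id_{\overline{\bbP^2} \circ \bbD^2_{(+,V)}}]$. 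Your alternative plan (reduce via Lemma~\ref{L:triviality_lemma} and Lemma~\ref{L:connection_lemma_prime} to a numerical equality for $\rmL'_\calC$) could in principle be made to work, but as written you defer exactly the scalar bookkeeping that is the crux of the step, and your stated answer for the inverse shows that this deferral hides a genuine error rather than a routine verification.
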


\begin{proof}
 The result is established by showing that the morphisms
 \[
  \eta_V \in \bfA_\calC(\bbS^1 \disjun \bbS^1) \left( \im \left( \sum_{i=1}^n \left[ \left( \bbD^2 \times \bbI \right)_{\ell'_i} \disjun \left(\bbD^2 \times \bbI \right)_{\ell''_{V,i}} \right] \right), \overline{\bbP^2} \circ \bbD^2_{(+,V)} \right)
 \]
 are natural with respect to morphisms $f \in \calC(V,V')$, and that they are invertible for all objects $V \in \Proj(\calC)$. For what concerns naturality, we need to show that
 \begin{align*}
  &\sum_{i,j=1}^n \left[ \bbW_{V'} \right] \circ \left[ \left( \bbD^2 \times \bbI \right)_{\ell'_j \circ \ell'_i} \disjun \left( \bbD^2 \times \bbI \right)_{\ell''_{V',j} \circ f \circ \ell''_{V,i}} \right] 
  = \left[ \id_{\overline{\bbP^2}} \circ (\bbD^2 \times \bbI)_f \right] \circ \left[ \bbW_V \right]
 \end{align*}
 for every morphism $f \in \calC(V,V')$. This follows directly from an admissible skein equivalence. Next, for what concerns invertibility of $\eta_V$, we claim that the inverse of the morphism
 \[
  \left[ \bbW_V \right] \in 
  \bfA_\calC(\bbS^1 \disjun \bbS^1) \left( \im \left( \sum_{i=1}^n \left[ \left( \bbD^2 \times \bbI \right)_{\ell'_i} \disjun \left(\bbD^2 \times \bbI \right)_{\ell''_{V,i}} \right] \right), \overline{\bbP^2} \circ \bbD^2_{(+,V)} \right)
 \]
 is given by the morphism 
 \[
  \zeta^{-1} \cdot \left[ \overline{\bbW_V} \right] \in 
  \bfA_\calC(\bbS^1 \disjun \bbS^1) \left( \overline{\bbP^2} \circ \bbD^2_{(+,V)}, \im \left( \sum_{i=1}^n \left[ \left( \bbD^2 \times \bbI \right)_{\ell'_i} \disjun \left(\bbD^2 \times \bbI \right)_{\ell''_{V,i}} \right] \right) \right),
 \] 
 where the $2$-morphism of $\bfadCob_\calC$
\[
 \overline{\bbW_V} : \overline{\bbP^2} \circ \bbD^2_{(+,V)} \Rightarrow \bbD^2_{(+,G^*)} \disjun \bbD^2_{(+,G \otimes V)}
\]
 is given by $(\overline{W},T^*_V,0)$ for the blue graph $T^*_V \subset \overline{W}$ from $P_{(+,V)}$ to $P_{(+,G^*)} \disjun P_{(+,G \otimes V)}$ of Figure~\ref{F:I_times_D_2_inverse}.

 \begin{figure}[ht]\label{F:I_times_D_2_inverse}
  \centering
  \includegraphics{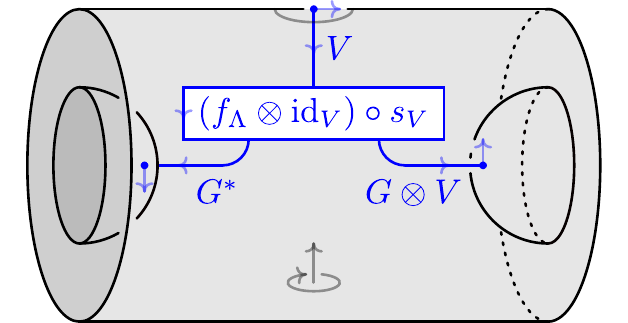}
  \caption{The $2$-morphism $\overline{\bbW_V}$ of $\bfadCob_\calC$.} 
 \end{figure}

 Indeed, on the one hand the equality
 \[
  \zeta^{-1} \cdot \left[ \overline{\bbW_V} \right] \circ \left[ \bbW_V \right] 
  = \sum_{i=1}^n \left[ \left( \bbD^2 \times \bbI \right)_{\ell'_i} \disjun \left(\bbD^2 \times \bbI \right)_{\ell''_{V,i}} \right]
 \]
 follows from Lemma~\ref{L:cutting} and Proposition~\ref{P:uccidetemi}. On the other hand, we have
 \begin{align*}
  \zeta^{-1} \cdot \left[ \bbW_V \right] \circ \left[ \overline{\bbW_V} \right]
  = \zeta^{-1} \cdot \left[  ( \bbI \times \bbS^1 \times \bbI )_V \right],
 \end{align*}
 where the $2$-morphism of $\bfadCob_\calC$
\[
 ( \bbI \times \bbS^1 \times \bbI )_V : \overline{\bbP^2} \circ \bbD^2_{(+,V)} \Rightarrow \overline{\bbP^2} \circ \bbD^2_{(+,V)}
\]
 is given by $(I \times S^1 \times I,T_V,0)$ for the bichrome graph $T_V \subset I \times S^1 \times I$ of Figure~\ref{F:I_times_S_1_times_I}. Thanks to \cite[Lemma~4.4]{DGGPR19}, this means
 \begin{align*}
  \zeta^{-1} \cdot \left[ \bbW_V \right] \circ \left[ \overline{\bbW_V} \right] 
  = \left[ \id_{\overline{\bbP^2} \circ \bbD^2_{(+,V)}} \right].
 \end{align*}
\end{proof}

 \begin{figure}[hbt]\label{F:I_times_S_1_times_I}
  \centering
  \includegraphics{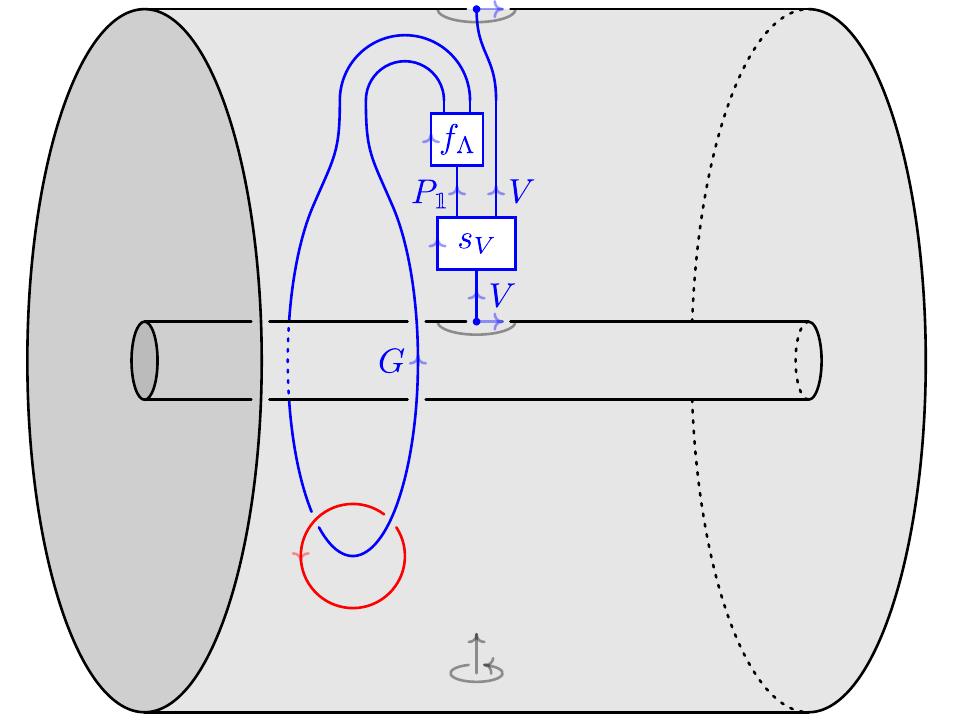}
  \caption{The $2$-mor\-phism $( \bbI \times \bbS^1 \times \bbI )_V$ of $\bfadCob_\calC$.}
 \end{figure}

\newpage

\appendix

\section{Projective coend}\label{A:projective_coend}

In this appendix, we prove Proposition~\ref{P:uccidetemi}.

\begin{proof}[Proof of Proposition~\ref{P:uccidetemi}]
 Let us start by showing that
 \[
  i_{G,V} \circ i_{X,V} = i_{X,V}
 \]
 for every $X \in \calC$. Indeed, we have
 \begin{align*}
  \pic[-15]{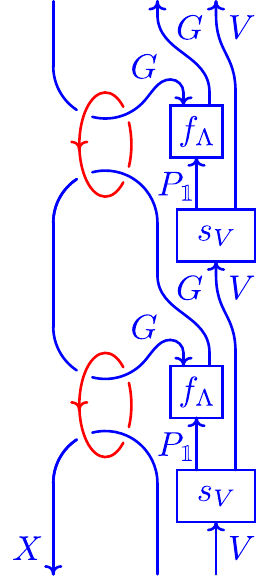} \hspace*{5pt}
  \doteq \hspace*{-7.5pt} \pic[-15]{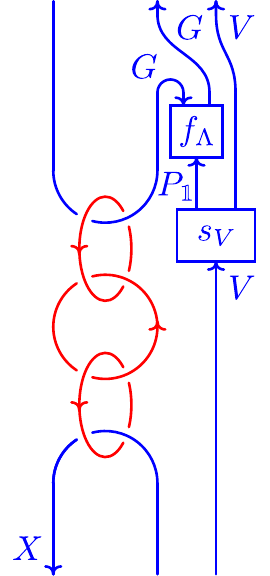} \hspace*{5pt}
  \doteq \hspace*{-7.5pt} \pic[-15]{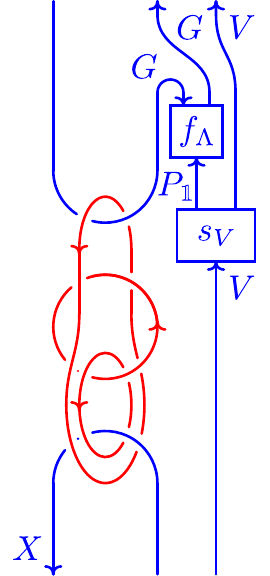} \hspace*{5pt}
  &\doteq \phantom{\zeta \cdot {}} \hspace*{-15pt} \pic[-15]{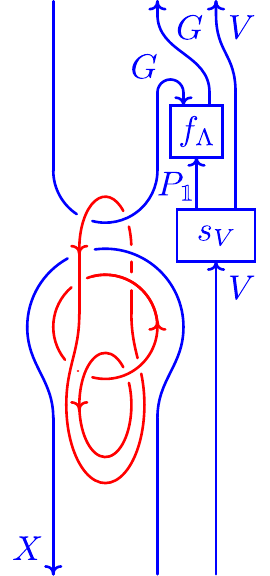} \\*[5pt]
  &\doteq \zeta \cdot \hspace*{-15pt} \pic{projective_coend}
 \end{align*}
 where the first skein equivalence follows from \cite[Lemma~4.5]{DGGPR19}, the second and third ones from \cite[Proposition~3.7]{DGGPR19}, and the fourth one from \cite[Lemma~2.7]{DGGPR19}. This has a few direct consequences. First of all, $\ell_V = i_{G,V}$ is idempotent, which is the first claim in the statement. Next, $i_{X,V}$ defines a morphism with target $\im \ell_V$ for every $X \in \Proj(\calC)$. Furthermore, the family
 \[
  \{ i_{X,V} : X^* \otimes X \otimes V \to \im \ell_V \mid X \in \calC \}
 \]
 is clearly dinatural. This means that $\im \ell_V$ is indeed a dinatural transformation with source $(\_^* \otimes \_ \otimes V) : \calC^\op \times \calC \to \Proj(\calC)$. Then, we need to prove it satisfies the universal property of the coend. In other words, we need to show that, for every object $Z \in \Proj(\calC)$ and every dinatural transformation $\eta : (\_^* \otimes \_ \otimes V) \din Z$, there exists a unique morphism $f_V(\eta) : \im \ell_V \to Z$ satisfying
 \[
  f_V(\eta) \circ i_{X,V} = \eta_X
 \]
 for all $X \in \calC$. We claim
 \[
  f_V(\eta) = \eta_G.
 \]
 Indeed, consider the monodromy transformation $\Omega : \coend \otimes \coend \to \coend \otimes \coend$, which is uniquely determined by the condition
 \[
  \Omega \circ (i_X \otimes i_Y) = (i_X \otimes i_Y) \circ (\id_{X^*} \otimes (c_{Y^*,X} \circ c_{X,Y^*}) \otimes \id_Y )
 \]
 for all $X,Y \in \calC$, and whose inverse $\Omega^{-1} : \coend \otimes \coend \to \coend \otimes \coend$ is uniquely determined by the condition
 \[
  \Omega \circ (i_X \otimes i_Y) = (i_X \otimes i_Y) \circ (\id_{X^*} \otimes (c_{X,Y^*}^{-1} \circ c_{Y^*,X}^{-1}) \otimes \id_Y )
 \]
 for all $X,Y \in \calC$. Then, the S-transformation $\calS : \coend \to \coend$ of \cite[Section~1.3]{L94} is defined as
 \[
  \calS := (\epsilon \otimes \id_\coend) \circ \Omega \circ (\id_\coend \otimes \intL)
 \]
 for the counit $\epsilon : \coend \to \one$ and the integral $\intL : \one \to \coend$ of $\coend$, and its inverse satisfies
 \[
  \calS^{-1} := (\id_\coend \otimes \epsilon) \circ \Omega^{-1} \circ (\intL \otimes \id_\coend).
 \]
 See \cite[Figure~3]{DGGPR20} for a graphical representation of these morphisms. Then, \cite[Lemma~4.5]{DGGPR19} implies
 \[
  \eta_G \circ i_{X,V} = f_\calC(\eta) \circ \left( \left( \calS \circ \calS^{-1} \circ i_X \right) \otimes \id_V \right) = f_\calC(\eta) \circ \left( i_X \otimes \id_V \right) = \eta_X,
 \]
 where $f_\calC(\eta) : \coend \otimes V \to Z$ is the unique morphism determined by the universal property of the coend
 \[
  \coend \otimes V = \int^{X \in \calC} X^* \otimes X \otimes V,
 \]
 which has structure morphisms $i_X \otimes \id_V : X^* \otimes X \otimes V \to \coend \otimes V$ for every $X \in \calC$. In particular, this proves that $\eta_G$ is indeed a morphism with source $\im \ell_V$, because, taking $X = G$, we get
 \[
  \eta_G \circ \ell_V = \eta_G \circ i_{G,V} = \eta_G.
 \]
 Now, uniqueness of $f_V(\eta)$ follows from the fact that if $\tilde{f}_V(\eta)  : \im \ell_V \to Z$ is another morphism satisfying
 \[
  \tilde{f}_V(\eta) \circ i_{X,V} = \eta_X
 \]
 for every $X \in \calC$, then the equality must hold for $X = G$ too, which means
 \[
  \eta_G = \tilde{f}_V(\eta) \circ i_{G,V} = \tilde{f}_V(\eta) \circ \ell_V = \tilde{f}_V(\eta),
 \]
 because the source of $\tilde{f}_V(\eta)$ is $\im \ell_V$.
 
 Finally, let us show that $\ell_V$ belongs to the image of the functor
 \[
  \pant : \Proj(\calC) \sqtimes \Proj(\calC) \to \Proj(\calC).
 \]
 Indeed, since $G$ is a projective generator of $\calC$, we know there exist morphisms $f_1,\ldots,f_m \in \calC(G \otimes G^*,G)$ and $f'_1,\ldots,f'_m \in \calC(G,G \otimes G^*)$ satisfying
 \[
  \id_{G \otimes G^*} = \sum_{i=1}^m f'_i \circ f_i.
 \]
 Then we have
 \[
  \pic{skein_equivalence_5}
  \doteq \sum_{i=1}^m \pic{skein_equivalence_6}
  \doteq \sum_{i=1}^m \zeta \cdot \pic{skein_equivalence_7}
 \]
 where the second skein equivalence follows from \cite[Lemma~4.4]{DGGPR19}.
\end{proof}

\addtocontents{toc}{\protect\setcounter{tocdepth}{2}}

\end{document}